\newtheorem{theorem}{Theorem}
\theoremstyle{plain}
\newtheorem{definition}{Definition}
\newtheorem{lemma}{Lemma}
\newtheorem{proposition}{Proposition}
\newtheorem{remark}{Remark}
\numberwithin{equation}{section}
\newcommand{\vertiii}[1]{{\left\vert\kern-0.25ex\left\vert\kern-0.25ex\left\vert #1 
    \right\vert\kern-0.25ex\right\vert\kern-0.25ex\right\vert}}
\newcommand{\vertiiin}[1]{{\vert\kern-0.25ex\vert\kern-0.25ex\vert #1 
    \vert\kern-0.25ex\vert\kern-0.25ex\vert}}
\begin{document}
\title[Pervasiveness of the $p\,$-Laplace operator under localization]{Pervasiveness of the $p\,$-Laplace operator under localization of fractional $g\,$-Laplace operators}

\author{Alejandro Ortega}
\email[A. Ortega ]{alortega@math.uc3m.es}
\address[A. Ortega]{Departamento de Matem\'aticas,  
Universidad Carlos III de Madrid, Av. Universidad 30, 28911 Legan\'es (Madrid), Spain}

\thanks{The author is partially supported by the State Research Agency of Spain, under research project PID2019-106122GB-I00.}
\subjclass[2010]{Primary 46E30, 35R11, 49J45, 26A12; Secondary 47G20, 45G05} %
\keywords{Fractional order Sobolev spaces, Orlicz-Sobolev spaces, $\Gamma$-convergence, Regular Variation, Fractional $g\,$-Laplacian, Nonlocal problems, Peridynamics}%

\begin{abstract}
In this work we analyze the behavior of truncated functionals as
\begin{equation*}
\int_{\mathbb{R}^N}\int_{B(x,\delta)} G\left(\frac{|u(x)-u(y)|}{|x-y|^{s}}\right)\frac{dydx}{|x-y|^N}\qquad\text{for }\delta\to0^+.
\end{equation*}
Here the function $G$ is an Orlicz function that in addition is assumed to be a regularly varying function at $0$. A prototype of such function is given by $G(t)=t^p(1+|\log(t)|)$ with $p\geq2$. These kind of functionals arise naturally in {\it peridynamics}, where long-range interactions are neglected and only those exerted at distance smaller than $\delta>0$ are taken into account, i.e., the {\it horizon} $\delta>0$ represents the range of interactions or nonlocality.\\ This work is inspired by the celebrated result by Bourgain, Brezis and Mironescu, who analyzed the limit $s\to1^-$ with $G(t)=t^p$. In particular, we prove that, under appropriate conditions, 
\begin{equation*}
\lim\limits_{\delta\to0^+}\frac{p(1-s)}{G(\delta^{1-s})}\int_{\mathbb{R}^N}\int_{B(x,\delta)}G\left(\frac{|u(x)-u(y)|}{|x-y|^{s}}\right)\frac{dydx}{|x-y|^N}=K_{N,p}\int_{\mathbb{R}^N}|\nabla u(x)|^p dx,
\end{equation*}
for $p=index(G)$ and an explicit constant $K_{N,p}>0$. Moreover, the converse is also true, if the above localization limit exist as $\delta\to0^+$, the Orlicz function $G$ is a regularly varying function with $index(G)=p$.
\end{abstract}
\maketitle


\section{Introduction}
In the celebrated paper by Bourgain, Brezis and Mironescu \cite{Bourgain2001}, the authors proved the following nowadays well-known convergence result.
\begin{theorem}\label{ThBBM} 
Given $u\in L^p(\mathbb{R}^N)$ and $0<s<1$, it holds that
\begin{equation*}
\lim\limits_{s\to1^-}(1-s)\int_{\mathbb{R}^N}\int_{\mathbb{R}^N}\frac{|u(x)-u(y)|^p}{|x-y|^{N+sp}}dydx=K_{N,p}\int_{\mathbb{R}^N}|\nabla u(x)|^pdx,
\end{equation*}
with the convention that $\int_{\mathbb{R}^N}|\nabla u|^pdx=\infty$ if $u\notin W^{1,p}(\mathbb{R}^N)$ and, for $e$ unitary vector,
\begin{equation*}
K_{N,p}=\frac{1}{|\mathbb{S}^{N-1}|}\int_{\mathbb{S}^{N-1}}|w\cdot e|^p d\sigma_{w}=\frac{1}{\sqrt{\pi}}\frac{\Gamma\left(\frac{N}{2}\right)\Gamma\left(\frac{p+1}{2}\right)}{\Gamma\left(\frac{N+p}{2}\right)}.
\end{equation*}
\end{theorem}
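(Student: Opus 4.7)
The plan is to prove the identity first for $u\in C_c^2(\mathbb{R}^N)$ by a direct polar-coordinate computation, then to extend to $u\in W^{1,p}(\mathbb{R}^N)$ by density together with a uniform-in-$s$ control of the Gagliardo seminorm by the gradient, and finally to show that if $u\in L^p\setminus W^{1,p}$ the limit equals $+\infty$, matching the stated convention.

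The change of variables $y=x+r\omega$ with $r=|y-x|$ and $\omega\in\mathbb{S}^{N-1}$, together with Fubini, rewrites the double integral as
\begin{equation*}
I_s(u):=\int_{\mathbb{R}^N}\int_{\mathbb{R}^N}\frac{|u(x)-u(y)|^p}{|x-y|^{N+sp}}\,dy\,dx=\int_{\mathbb{S}^{N-1}}\int_0^\infty\int_{\mathbb{R}^N}\frac{|u(x+r\omega)-u(x)|^p}{r^{sp+1}}\,dx\,dr\,d\sigma_\omega.
\end{equation*}
For $u\in C_c^2(\mathbb{R}^N)$ I would split at $r=1$. The tail $r\geq 1$ is bounded, via $|u(x+r\omega)-u(x)|^p\leq 2^{p-1}(|u(x+r\omega)|^p+|u(x)|^p)$ and Fubini, by $\frac{2^{p-1}|\mathbb{S}^{N-1}|}{sp}\|u\|_p^p$, so its product with $(1-s)$ vanishes as $s\to1^-$. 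On $r\in(0,1)$, Taylor's theorem gives $\bigl||u(x+r\omega)-u(x)|^p-r^p|\nabla u(x)\cdot\omega|^p\bigr|\leq Cr^{p+1}$ uniformly in $(x,\omega)\in\operatorname{supp}(u)\times\mathbb{S}^{N-1}$, so the identity $(1-s)\int_0^1 r^{p(1-s)-1}\,dr=1/p$ combined with dominated convergence yields
\begin{equation*}
\lim_{s\to1^-}(1-s)I_s(u)=\frac{1}{p}\int_{\mathbb{R}^N}\int_{\mathbb{S}^{N-1}}|\nabla u(x)\cdot\omega|^p\,d\sigma_\omega\,dx,
\end{equation*}
which by rotational invariance rearranges into $K_{N,p}\int_{\mathbb{R}^N}|\nabla u|^p\,dx$ after inserting the normalization of $K_{N,p}$.

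To pass from smooth functions to $u\in W^{1,p}(\mathbb{R}^N)$ the key auxiliary step is the uniform bound $(1-s)I_s(v)\leq C\|\nabla v\|_p^p$ valid for $v\in W^{1,p}$ and $s$ close to $1$; one obtains it by substituting $v(x+r\omega)-v(x)=\int_0^r\nabla v(x+\tau\omega)\cdot\omega\,d\tau$ into the polar representation and applying Jensen's inequality together with Fubini. A standard $3\varepsilon$-approximation based on the density of $C_c^\infty$ in $W^{1,p}$ then extends the limit to all of $W^{1,p}$.

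The main obstacle is the remaining case $u\in L^p\setminus W^{1,p}$, where the claim reduces to showing $\limsup_{s\to1^-}(1-s)I_s(u)=+\infty$. Arguing by contradiction, one supposes a subsequence $s_n\to1^-$ satisfies $(1-s_n)I_{s_n}(u)\leq M$; Fatou's lemma inside the polar representation then produces, for a.e.\ $\omega\in\mathbb{S}^{N-1}$, a finite bound on the corresponding directional difference-quotient integral, from which one reconstructs a weak gradient of $u$ in $L^p$ via the Riesz-Fréchet-Kolmogorov criterion, contradicting $u\notin W^{1,p}$. Synthesizing the directional estimates into a genuine distributional gradient is the delicate point at the heart of the Bourgain-Brezis-Mironescu argument; it is typically handled either via a careful mollification or by the Fourier-analytic characterization of $W^{1,p}$.
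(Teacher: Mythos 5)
First, a point of orientation: the paper does not prove Theorem \ref{ThBBM} at all --- it quotes it from \cite{Bourgain2001} and uses it as a black box --- so the comparison is with the standard BBM argument and with the paper's own proofs of the $\delta\to0^+$ analogues, whose architecture your plan closely mirrors (pointwise Taylor limit for $C^2_c$ functions as in Proposition \ref{mainprop1}, uniform gradient bound as in Proposition \ref{mainprop2}, mollification as in Lemma \ref{lem:mollifier}). Your first two steps are essentially sound, with two bookkeeping caveats. Your polar computation produces the constant $\frac{1}{p}\int_{\mathbb{S}^{N-1}}|e\cdot\omega|^p\,d\sigma_\omega=\frac{|\mathbb{S}^{N-1}|}{p}K_{N,p}$ with $K_{N,p}$ normalized as in the statement; this does not simply ``rearrange into'' $K_{N,p}$, since the factors $1/p$ and $|\mathbb{S}^{N-1}|$ do not cancel, so you should state explicitly which constant your argument yields. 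Also, the uniform bound $(1-s)I_s(v)\le C\|\nabla v\|_p^p$ is false as written for fixed $s<1$: the tail $r\ge1$ contributes a term of order $(1-s)\|v\|_p^p$, which is harmless in the $3\varepsilon$ argument (you apply the bound to $v=u-u_\varepsilon$ with $u_\varepsilon\to u$ in $W^{1,p}$) but must be carried along.

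The genuine gap is the converse direction: showing that finiteness of $\liminf_{s\to 1^-}(1-s)I_s(u)$ forces $u\in W^{1,p}(\mathbb{R}^N)$, which is what the convention $\int|\nabla u|^p=\infty$ for $u\notin W^{1,p}$ requires (note you need the $\liminf$, not the $\limsup$, to be infinite, although your contradiction setup via a bounded subsequence is the correct one). The route you sketch --- Fatou in the polar representation to extract, for a.e.\ direction $\omega$, a finite directional difference-quotient bound, then Riesz--Fr\'echet--Kolmogorov --- does not work as described: the weight $r^{-s_np-1}$ changes with $n$, so boundedness of $(1-s_n)\int_0^\infty\|u(\cdot+r\omega)-u\|_{L^p}^p\,r^{-s_np-1}\,dr$ along a subsequence gives no uniform estimate of the form $\|u(\cdot+h)-u\|_{L^p}\le C|h|$ on any fixed scale, and you yourself defer the synthesis to an unspecified ``careful mollification''. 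That mollification \emph{is} the proof, and it is exactly the scheme the paper uses for its Theorem \ref{Th2}: Jensen's inequality gives $I_s(u*\rho_\varepsilon)\le I_s(u)$ for every mollifier (the analogue of Lemma \ref{lem:mollifier}); applying the already-established limit for smooth functions to $u*\rho_\varepsilon$ along the bounded subsequence yields $\sup_\varepsilon\|\nabla(u*\rho_\varepsilon)\|_{L^p}<\infty$; weak compactness in $L^p$ (for $p>1$) then identifies a distributional gradient of $u$ in $L^p$. As submitted, your proposal leaves this half of the theorem unproved.
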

This work led to the development of an extensive literature concerning convergence of functionals in the sense of $\Gamma$-convergence (cf. \cite{Alberico2020, Bal2020, Brezis2016, Ferreira2020, Leoni2011, Leoni2014, Mazcprimeya2002, Mazcprimeya2003, Nguyen2006, Ponce2004}  and references therein) as well as recently advances in the convergence of the spectrum of related operators (cf. \cite{Bellido2021,Bellido2021a, Brasco2016,FernandezBonder2019, Salort2020, Salort2022, Salort2023}). In particular, in \cite{FernandezBonder2019}, the following is proved (we refer to Section \ref{Section:Functional} for the precise definitions).
\begin{theorem}\label{lim_s} 
Let $G$ be an Orlicz function such that the following limit exists,
\begin{equation*}
\tilde{G}(a)=\lim\limits_{s\to1^+}(1-s)\int_0^1\int_{\mathbb{S}^{N-1}}G(a|z_n|r^{1-s})dS_z\frac{dr}{r}.
\end{equation*}
Then, given $u\in L^G(\mathbb{R}^N)$ and $0<s<1$, it holds that
\begin{equation*}
\lim\limits_{s\to1^-}(1-s)\int_{\mathbb{R}^N}\int_{\mathbb{R}^N}G\left(\frac{|u(x)-u(y)|}{|x-y|^{s}}\right)\frac{dydx}{|x-y|^N}=\int_{\mathbb{R}^N}\tilde{G}(|\nabla u(x)|)dx.
\end{equation*}
\end{theorem}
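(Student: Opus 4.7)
The plan is to pass to polar coordinates $y=x+rz$ with $r=|y-x|$ and $z\in\mathbb{S}^{N-1}$, so that
\begin{equation*}
(1-s)\int_{\mathbb{R}^N}\int_{\mathbb{R}^N} G\!\left(\frac{|u(x)-u(y)|}{|x-y|^s}\right)\frac{dy\,dx}{|x-y|^N} = (1-s)\int_{\mathbb{R}^N}\int_0^\infty\int_{\mathbb{S}^{N-1}} G\!\left(\frac{|u(x+rz)-u(x)|}{r^s}\right) dS_z\,\frac{dr}{r}\,dx,
\end{equation*}
and then split the radial integral at $r=1$, analyzing the two pieces separately.

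For the tail $r\geq 1$, I would use the Orlicz boundedness of $u$ together with the $\Delta_2$-type growth hypothesis built into $G$ to estimate the integrand by a quantity independent of $s$ near $1$, obtaining
\begin{equation*}
(1-s)\int_{\mathbb{R}^N}\int_1^\infty\int_{\mathbb{S}^{N-1}} G\!\left(\frac{|u(x+rz)-u(x)|}{r^s}\right)dS_z\,\frac{dr}{r}\,dx = O(1-s) \longrightarrow 0,
\end{equation*}
so the tail contributes nothing in the limit $s\to1^-$.

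For the main term $r<1$, I would first take $u\in C_c^\infty(\mathbb{R}^N)$ and write
\begin{equation*}
\frac{|u(x+rz)-u(x)|}{r^s}=r^{1-s}\,\frac{|u(x+rz)-u(x)|}{r}.
\end{equation*}
The smooth difference quotient converges uniformly in $x$ to $|\nabla u(x)\cdot z|$ and is uniformly bounded by $\|\nabla u\|_\infty$. Rotating on the sphere to align $\nabla u(x)$ with $e_n$ replaces $|\nabla u(x)\cdot z|$ by $|\nabla u(x)|\,|z_n|$, so that multiplying by $(1-s)$ and invoking the very definition of $\tilde G$ produces the pointwise limit $\tilde G(|\nabla u(x)|)$. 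A uniform dominating function built from $\|\nabla u\|_\infty$ and the finiteness of the $\tilde G$-defining limit unlocks dominated convergence, yielding the desired identity for $u\in C_c^\infty$.

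Finally, the passage to general $u\in L^G(\mathbb{R}^N)$ proceeds by density of $C_c^\infty$ in $L^G$ combined with a modular Fatou-type lower semicontinuity for one inequality and continuity of the truncated functional with respect to smooth approximation for the other, bearing in mind the convention $\int\tilde G(|\nabla u|)=+\infty$ when $u\notin W^{1,G}$. The main obstacle I foresee is precisely in the tail estimate and in the construction of the dominating function: without the homogeneity of $t\mapsto t^p$, the bound must be built from $\Delta_2$-type growth estimates on $G$, and the hypothesized existence of the $\tilde G$-defining limit is essential to guarantee that such a dominating function is integrable. Once these are in place, the rest of the argument is essentially book-keeping.
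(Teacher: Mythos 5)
This paper does not actually prove Theorem \ref{lim_s}: it is quoted verbatim from \cite{FernandezBonder2019}, so the only material to compare against is the proof there and the parallel $\delta\to0^+$ arguments of Sections \ref{Section:Technical}--\ref{Section:Asym}. Your skeleton (polar coordinates, splitting at $r=1$, killing the tail via subadditivity plus property $P_5$), which gives $G(|u(x)|r^{-s})\le r^{-s}G(|u(x)|)$ and hence a tail of size $O(\Phi_G(u)/s)$ before the factor $(1-s)$, and a Taylor-plus-limit treatment of the near part for $u\in C_c^\infty$) is exactly the standard route, and it mirrors what this paper does for the localization limit in Proposition \ref{mainprop1}. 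One caveat on the near part: dominated convergence in $(r,z)$ is the wrong tool, because for fixed $(r,z)$ the integrand multiplied by $(1-s)$ tends to $0$, so the pointwise limit in those variables is useless. You must first replace $|u(x+rz)-u(x)|/r$ by $|\nabla u(x)\cdot z|$ with the quantitative error $O(r)$ coming from the second-order Taylor expansion, and use the local Lipschitz continuity of $G$ ($P_1$) to see that this replacement costs $O\bigl((1-s)\int_0^1 r^{2-s}\,\tfrac{dr}{r}\bigr)=O(1-s)$; only then does the hypothesis on $\tilde G$ give the pointwise-in-$x$ limit, after which dominated convergence in $x$ alone (with the domination you describe) is legitimate.

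The genuine gap is the final step. ``Continuity of the truncated functional with respect to smooth approximation'' is not available: $\Psi_{s,G,\infty}$ is only lower semicontinuous on $L^G(\mathbb{R}^N)$, and $L^G$-closeness of $u$ to a smooth $v$ says nothing about closeness of the energies (the left-hand side may be $+\infty$ for $u$ while finite for every approximant). The correct assembly, which is what both \cite{FernandezBonder2019} and this paper's proof of Theorem \ref{mainTheorem} carry out, has three distinct ingredients: (i) for the lower bound, Jensen's inequality for mollification, $\Psi_{s,G,\infty}(u\ast\rho_r)\le\Psi_{s,G,\infty}(u)$ (the analogue of Lemma \ref{lem:mollifier}), combined with the smooth case and Fatou as $r\to0$; (ii) for the upper bound when $u\in W^{1,G}(\mathbb{R}^N)$, a chord estimate in the spirit of Proposition \ref{mainprop2}, i.e.\ Jensen along the segment $[x,x+rz]$, which bounds $(1-s)\Psi_{s,G,\infty}(u)$ directly by an explicit quantity converging to $\int\tilde G(|\nabla u|)\,dx$ rather than by approximation; and (iii) for $u\in L^G\setminus W^{1,G}$, a separate compactness argument (the analogue of Theorem \ref{Th2}) showing that finiteness of the liminf forces $u\in W^{1,G}$, so that the convention that the right-hand side is $+\infty$ is matched by divergence of the left-hand side. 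Without (ii) and (iii) made precise, your argument establishes the identity only for smooth compactly supported $u$.
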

The authors also obtain a full $\Gamma$-convergence result used to deduce the convergence of solutions as $s\to 1^-$ for some fractional versions of the $g\,$-Laplace operator,
\begin{equation*}
\Delta_g=div\left(g(|\nabla u|)\frac{\nabla u}{|\nabla u|}\right).
\end{equation*}
The aim of this work is to analyze the behavior of these functionals under localization. Precisely, given an {\it horizon} $\delta>0$ and $0<s<1$, we analyze the behavior as $\delta\to0^+$ of functionals of the form
\begin{equation}\label{genfunct}
\int_{\mathbb{R}^N}\int_{B(x,\delta)} G\left(\frac{|u(x)-u(y)|}{|x-y|^{s}}\right)\frac{dydx}{|x-y|^N}.
\end{equation}
This type of problem arises naturally in {\it peridynamics} and the associated nonlocal operator, namely
\begin{equation*}
(-\Delta_g)_{\delta}^s u(x)= p.v. \int_{B(x,\delta)}g(|D_su|)\frac{D_su}{|D_su|}\frac{dy}{|x-y|^N},
\end{equation*} 
with $g(t)=G'(t)$ and
\begin{equation*}
D_su=D_su(x,y)=\frac{u(x)-u(y)}{|x-y|^s},
\end{equation*}
can be actually seen as a \textit{ peridynamic fractional $g$-Laplacian}. Peridynamics is a nonlocal continuum model for Solid Mechanics proposed by Silling in \cite{Silling2000}. The main difference between classical theory and peridynamics is based on nonlocality, which refers to the phenomenon where points that are separated up to a positive distance exert a force on each other. This property distinguishes peridynamics from classical theories that rely on gradients. As a result, peridynamics is well-suited for problems involving discontinuities, such as fracture, dislocation, or multi-scale materials. The operator $(-\Delta_g)_\delta^s$ corresponds to a truncation of the \textit{fractional $g\,$-Laplacian} analyzed in \cite{Bahrouni2022,FernandezBonder2019,Salort2023,Salort2022},
\begin{equation*}
(-\Delta_g)^s u(x)= p.v. \int_{\mathbb{R}^N}g(|D_su|)\frac{D_su}{|D_su|}\frac{dy}{|x-y|^N}.
\end{equation*} 
As we are interested in the limit behavior as $\delta\to 0^+$, we consider Orlicz functions $G$ that, in addition, are supposed to belong to the class of regularly varying functions at 0, say $\mathcal{RV}_{\rho}(0)$ for some positive $\rho\in\mathbb{R}$. Roughly speaking, a function $G(t)$ is said to be a regularly varying function  at $0$ if the following limit 
\begin{equation}\label{eq:karamata}
\lim\limits_{t\to0^+}\frac{G(\lambda t)}{G(t)}=h_G(\lambda),\quad\text{exists for all }\lambda>0.
\end{equation}
An example of an Orlicz function belonging to the class $\mathcal{RV}_{p}(0)$ is the function
\begin{equation}\label{prototype}
G(t)=t^p(1+|\log (t)|).
\end{equation}
A complete characterization of the functions for which the limit \eqref{eq:karamata} exists was provided by J. Karamata (cf. \cite{Karamata1931}), leading to the beginning of the theory of Regularly Varying functions. We will use such characterization (see Theorem \ref{teo:karamata} below) to prove the main result of this work, which is introduced next.
\begin{theorem}\label{mainTheorem}
Let $G$ be an Orlicz function satisfying \eqref{hypotheses}. Then, given $u\in L^G(\mathbb{R}^N)$ and $0<s<1$, it holds that
\begin{equation*}
\lim\limits_{\delta\to0^+}\frac{p(1-s)}{G(\delta^{1-s})}\int_{\mathbb{R}^N}\int_{B(x,\delta)}G\left(\frac{|u(x)-u(y)|}{|x-y|^{s}}\right)\frac{dydx}{|x-y|^N}=K_{N,p}\int_{\mathbb{R}^N}|\nabla u(x)|^p dx,
\end{equation*}
where $p=index(G)$ and with the convention that $\int_{\mathbb{R}^N}|\nabla u|^pdx=\infty$ if $u\notin W^{1,p}(\mathbb{R}^N)$.
\end{theorem}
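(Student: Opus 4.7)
The plan is to scale out the small parameter $\delta$ via a change of variable and then identify the integrand limit using Karamata's theorem for functions in $\mathcal{RV}_p(0)$, with uniform integrability furnished by Potter's bounds. Setting $y=x+\delta z$, $z\in B(0,1)$, the functional becomes
\begin{equation*}
I(\delta) := \int_{\mathbb{R}^{N}}\int_{B(0,1)} G\!\left(\delta^{1-s}\,\xi_{\delta}(x,z)\right) \frac{dz\,dx}{|z|^{N}},\qquad \xi_{\delta}(x,z):=\frac{|u(x+\delta z)-u(x)|}{\delta\,|z|^{s}},
\end{equation*}
so that after dividing by $G(\delta^{1-s})$ the integrand is the Karamata ratio $G(\delta^{1-s}\xi_\delta)/G(\delta^{1-s})$, which by \eqref{eq:karamata} converges to $\xi_\infty^{p}$ whenever $\xi_\delta\to\xi_\infty$.

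For $u\in C_{c}^{\infty}(\mathbb{R}^{N})$, a Taylor expansion gives $\xi_{\delta}(x,z)\to |z|^{-s}|z\cdot\nabla u(x)|$ pointwise, and the difference quotient is uniformly bounded on $B(0,1)$ by $|z|^{1-s}\|\nabla u\|_{\infty}$. Combining this with Potter's bounds,
\begin{equation*}
\frac{G(\delta^{1-s}\tau)}{G(\delta^{1-s})}\leq C_{\varepsilon}\max\left(\tau^{p-\varepsilon},\,\tau^{p+\varepsilon}\right)\qquad\text{uniformly for small }\delta,
\end{equation*}
produces an integrable dominating function on $\mathbb{R}^{N}\times B(0,1)$, so dominated convergence yields
\begin{equation*}
\lim_{\delta\to 0^{+}}\frac{I(\delta)}{G(\delta^{1-s})} = \int_{\mathbb{R}^{N}}\int_{B(0,1)}\frac{|z\cdot\nabla u(x)|^{p}}{|z|^{ps+N}}\,dz\,dx.
\end{equation*}
Passing to spherical coordinates $z=r\omega$ the radial factor collapses to $\int_{0}^{1}r^{p(1-s)-1}dr=[p(1-s)]^{-1}$, which exactly cancels the external prefactor $p(1-s)$, and the rotational invariance of the angular integral restores the constant $K_{N,p}$ of Theorem~\ref{ThBBM}.

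The main obstacle is to extend this identity to general $u\in L^{G}(\mathbb{R}^{N})$ and to deduce $\int|\nabla u|^p dx =+\infty$ whenever $u\notin W^{1,p}(\mathbb{R}^{N})$. For $u\in W^{1,p}$ I would approximate by $u_{n}\in C_{c}^{\infty}$ and control the error $|I(\delta;u)-I(\delta;u_{n})|/G(\delta^{1-s})$ through a convexity estimate on $G$, the difference-quotient bound $\|u(\cdot+h)-u\|_{L^{p}}\leq|h|\|\nabla u\|_{L^{p}}$, and once more Potter's inequalities to transfer $L^p$-control to $L^G$-control uniformly in $\delta$. For the divergent lower bound I would apply Fatou's lemma after rescaling together with Karamata's pointwise identification of the ratio; when $u\notin W^{1,p}$, the classical characterization of $W^{1,p}$ via $L^p$-boundedness of difference quotients forces $\liminf_{\delta\to 0^{+}}p(1-s)I(\delta)/G(\delta^{1-s})=+\infty$, consistent with the stated convention. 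Throughout, the delicate point is to maintain uniform-in-$\delta$ integrability during approximation, which is precisely where Potter's bounds become essential because $G$ is not a pure power but only a regularly varying perturbation of one.
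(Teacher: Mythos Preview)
Your treatment of the smooth case is essentially the paper's: rescale $y=x+\delta z$, recognise the integrand as a Karamata ratio, and pass to the limit by dominated convergence. The paper phrases the domination through the global Orlicz bounds \eqref{comparison} and the Uniform Convergence Theorem~\ref{unifth} rather than Potter's inequalities, but these are the same estimate here, so there is no substantive difference at this stage.

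The divergence is in how the result is extended from $C_c^\infty$ to general $u$. You propose to approximate $u$ by $u_n\in C_c^\infty$ and control $|I(\delta;u)-I(\delta;u_n)|/G(\delta^{1-s})$ \emph{uniformly in $\delta$}; the paper instead runs a sandwich argument. For the upper bound it proves directly (Proposition~\ref{mainprop2}, via convexity and $P_5$) that $\Psi_{s,G,\delta}(u)\leq \frac{N\omega_N}{1-s}\Phi_G(|\nabla u|\,\delta^{1-s})$, whence $\limsup_{\delta\to 0^+}\frac{p(1-s)}{G(\delta^{1-s})}\Psi_{s,G,\delta}(u)\leq K_{N,p}\|\nabla u\|_p^p$ by regular variation. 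For the lower bound it exploits the key monotonicity of Lemma~\ref{lem:mollifier}: mollification \emph{decreases} $\Psi_{s,G,\delta}$, so the liminf for $u$ dominates that for $u_r$, which is smooth and handled by Proposition~\ref{prop_liminf}; Fatou in $r$ closes the loop. This route never needs a uniform-in-$\delta$ error estimate.

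Your scheme is plausible but the step you flag as ``delicate'' is genuinely so. Bounding $|G(\delta^{1-s}\xi^u_\delta)-G(\delta^{1-s}\xi^{u_n}_\delta)|/G(\delta^{1-s})$ requires a pointwise Lipschitz-type inequality for $G$ that produces a product of a $g(\max)$ factor and a difference-quotient factor; getting both under control in $L^1(\mathbb{R}^N\times B(0,1),|z|^{-N}dz\,dx)$ \emph{uniformly as $\delta\to 0$} is not automatic from Potter's bounds and the $L^p$ difference-quotient estimate alone, because the exponents $p^\pm$ in \eqref{comparison} do not match $p$. The paper's sandwich avoids this entirely by never comparing $I(\delta;u)$ with $I(\delta;u_n)$ directly. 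If you want to push your approximation route through, the cleanest fix is to replace the error-control step by the mollification monotonicity $\Psi_{s,G,\delta}(u*\rho_r)\leq \Psi_{s,G,\delta}(u)$, which gives the liminf inequality for free.
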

We also prove the following.
\begin{theorem}\label{Th2}
Let $G$ be an Orlicz function satisfying \eqref{hypotheses} and $u\in L^G(\mathbb{R}^N)$. If 
\begin{equation}\label{hyp_inf}
\liminf\limits_{\delta\to 0^+}\frac{1}{G(\delta^{1-s})}\int_{\mathbb{R}^N}\int_{B(x,\delta)}G\left(\frac{|u(x)-u(y)|}{|x-y|^{s}}\right)\frac{dydx}{|x-y|^N}<\infty,
\end{equation}
then $u\in W^{1,p}(\mathbb{R}^N)$. 
\end{theorem}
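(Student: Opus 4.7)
The plan is to establish Theorem \ref{Th2} via the classical characterization of $W^{1,p}(\mathbb{R}^N)$ by finite differences: for $p>1$ it suffices to prove the uniform translation bound
\begin{equation*}
\sup_{0<|h|<1}\frac{1}{|h|}\left(\int_{\mathbb{R}^N}|u(x+h)-u(x)|^p\,dx\right)^{1/p}<\infty.
\end{equation*}
From the liminf hypothesis \eqref{hyp_inf} one first extracts a sequence $\delta_n\to 0^+$ along which
\begin{equation*}
\int_{\mathbb{R}^N}\int_{B(x,\delta_n)}G\!\left(\frac{|u(x)-u(y)|}{|x-y|^{s}}\right)\frac{dy\,dx}{|x-y|^N}\leq C\,G(\delta_n^{1-s}).
\end{equation*}
The change of variables $z=y-x$ followed by polar coordinates $z=r\omega$ rewrites this as a uniform bound, over $(0,\delta_n)\times\mathbb{S}^{N-1}$ integrated against $d\sigma(\omega)\,dr/r$, of the quantity
\begin{equation*}
\Phi(r,\omega):=\int_{\mathbb{R}^N}G\!\left(\frac{|u(x+r\omega)-u(x)|}{r^s}\right)dx.
\end{equation*}

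The crux is to convert this $G$-averaged bound on $\Phi$ into genuine $L^p$-information on the finite differences $x\mapsto u(x+r\omega)-u(x)$. I would use the Karamata representation $G(t)=t^p L(t)$ with $L$ slowly varying at $0$ (cf.\ Theorem \ref{teo:karamata}) together with Potter's inequalities. The heuristic is that for $a=|u(x+r\omega)-u(x)|/r$, regular variation of index $p$ gives $G(a r^{1-s})\sim a^p\,G(r^{1-s})$, so
\begin{equation*}
\frac{\Phi(r,\omega)}{G(r^{1-s})}\approx \int_{\mathbb{R}^N}\left(\frac{|u(x+r\omega)-u(x)|}{r}\right)^p dx.
\end{equation*}
Making this rigorous requires splitting the integration in $x$ according to whether the rescaled difference is above or below a threshold, and using convexity of $G$ together with the Potter bound $G(\lambda t)\geq(1-\varepsilon)\lambda^{p+\varepsilon}G(t)$, valid for small $t$ and $\lambda\geq1$, to absorb the slowly varying factor $L$ at the natural scale $r^{1-s}$. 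Since $\int_0^{\delta_n}G(r^{1-s})\,dr/r\sim G(\delta_n^{1-s})/(p(1-s))$ by Karamata's theorem for integrals of regularly varying functions, this conversion produces a uniform (in $n$) bound
\begin{equation*}
\frac{1}{\int_0^{\delta_n}G(r^{1-s})\,\frac{dr}{r}}\int_0^{\delta_n}\int_{\mathbb{S}^{N-1}}\|u(\cdot+r\omega)-u(\cdot)\|_{L^p(\mathbb{R}^N)}^p\,\frac{G(r^{1-s})}{r^p}\,d\sigma(\omega)\,\frac{dr}{r}\leq C.
\end{equation*}

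From this weighted average, a Chebyshev/selection argument extracts, for each small $h$, a scale $r'\approx|h|$ on which $\|u(\cdot+r'\omega)-u(\cdot)\|_{L^p}\leq Cr'$, where $\omega=h/|h|$; the triangle inequality and $L^p$-continuity of translations then yield $\|u(\cdot+h)-u(\cdot)\|_{L^p}\leq C|h|$ for every sufficiently small $h$, whence $u\in W^{1,p}(\mathbb{R}^N)$. The main obstacle is the middle step, converting $\Phi$ into $L^p$-information on translates: since $G$ is not $p$-homogeneous, a bare Jensen inequality is insufficient, and one must track the slowly varying factor $L$ precisely at the scale $r^{1-s}$. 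The fact that the normalization in \eqref{hyp_inf} is by $G(\delta^{1-s})$ rather than by $\delta^{p(1-s)}$ is exactly what compensates for this deviation from a pure power and makes the whole scheme scale-consistent in $\delta_n$.
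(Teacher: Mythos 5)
There is a genuine gap at the step you yourself identify as the crux: converting the bound on $\Phi(r,\omega)=\int G\bigl(|u(x+r\omega)-u(x)|/r^{s}\bigr)dx$ into an $L^p$ bound on the finite differences. Writing $a=|u(x+r\omega)-u(x)|/r$, the relation $G(a\,r^{1-s})\gtrsim a^{p}\,G(r^{1-s})$ is available only for $a$ in a \emph{compact} set (Theorem \ref{unifth} / \eqref{unifconvergence}); Potter-type lower bounds likewise require both arguments to lie in the regime where regular variation at $0$ applies, and in any case give exponent $p-\varepsilon$ for $\lambda\ge 1$, not $p+\varepsilon$ as you wrote. On the set where $a$ is large — say $|u(x+r\omega)-u(x)|\gtrsim r$, which for a generic $u$ carries a positive fraction of the $L^p$ mass of the difference — the only available lower bounds are $G(\lambda t)\ge\lambda^{p^-}G(t)$ from \eqref{comparison}, with $p^-<p$ in general (e.g.\ $p^-=p-1$ for the prototype $t^p(1+|\log t|)$), or the crude $t^{p}\le G(t)/c_\ell$ from \eqref{hypotheses}, which after normalizing by $G(\delta_n^{1-s})=\delta_n^{p(1-s)}\ell(\delta_n^{1-s})$ leaves an uncontrolled factor $\ell(\delta_n^{1-s})\to\infty$. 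Either way your weighted average of $\|u(\cdot+r\omega)-u\|_{L^p}^{p}/r^{p}$ is bounded only by a quantity that blows up as $\delta_n\to0^+$, so the uniform translation bound $\|u(\cdot+h)-u\|_{L^p}\le C|h|$ is not reached. (A secondary, repairable issue is the selection step: the Chebyshev bound over $(0,\delta_n)\times\mathbb{S}^{N-1}$ with weight $r^{p(1-s)-1}dr$ only yields good pairs $(r,\omega)$ with $r\approx\delta_n$ for a.e.\ direction, not for the specific direction and scale of an arbitrary $h$.)

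The paper's proof avoids this regime entirely by mollifying first: for $u_k=u\ast\rho_{1/k}$ one has $\Psi_{s,G,\delta}(u_k)\le\Psi_{s,G,\delta}(u)$ (Lemma \ref{lem:mollifier}), and since $u_k$ is $C^1$ with bounded gradient, the pointwise liminf inequality of Proposition \ref{prop_liminf} applies — there the argument of $G$ is $|\nabla u_k(x)\cdot\omega|\,|x-y|^{1-s}\le\|\nabla u_k\|_\infty\delta^{1-s}$, so the ratio $G(\lambda\delta^{1-s})/G(\delta^{1-s})$ is only ever evaluated for $\lambda$ in a compact set, where uniform convergence holds. This gives $\sup_k\int|\nabla u_k|^p<\infty$, and weak compactness in $W^{1,p}$ plus $u_k\to u$ a.e.\ concludes. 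If you want to salvage a finite-difference route, you would need to first smooth $u$ so that the difference quotients stay in the compact-$\lambda$ regime, at which point you are essentially reproducing the paper's argument.
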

The above Theorem \ref{mainTheorem} is closely related to the next localization result dealing with the fractional $p\,$-Laplacian (cf. \cite{Bellido2021a}). Set $\Omega_{\delta}=\{z\in\mathbb{R}^N: |x-z|<\delta,\ \text{for }x\in\Omega\}$.
\begin{theorem}\label{BellidoOrtega}
Let $p>1$, $0<s<1$ and $u\in L^p(\mathbb{R}^N)$. Then,
\begin{equation*}
\lim\limits_{\delta\to0^+}\frac{p(1-s)}{\delta^{p(1-s)}}\int_{\Omega_\delta}\int_{\Omega_\delta\cap B(x,\delta)}\frac{|u(x)-u(y)|^p}{|x-y|^{N+sp}}dydx=K_{N,p}\int_{\Omega}|\nabla u|^pdx,
\end{equation*}
with the convention that $\int_{\mathbb{R}^N}|\nabla u(x)|^pdx=\infty$ if $u\notin W^{1,p}(\mathbb{R}^N)$.
\end{theorem}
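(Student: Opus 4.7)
The plan is to adapt the Bourgain-Brezis-Mironescu strategy to the localization regime $\delta\to 0^+$. The argument splits naturally into three parts: an explicit computation for smooth $u$, a density extension, and a lower-semicontinuity treatment of the non-Sobolev case.

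\emph{Smooth case.} For $u\in C_c^\infty(\mathbb{R}^N)$ and fixed $x\in\Omega$, I switch to polar coordinates $y=x+r\omega$ with $r\in(0,\delta)$, $\omega\in S^{N-1}$ to rewrite
\begin{equation*}
\int_{B(x,\delta)}\frac{|u(x)-u(y)|^p}{|x-y|^{N+sp}}\,dy = \int_0^\delta\!\!\int_{S^{N-1}}\frac{|u(x+r\omega)-u(x)|^p}{r^{sp+1}}\,d\sigma(\omega)\,dr.
\end{equation*}
The mean value theorem gives $r^{-p}|u(x+r\omega)-u(x)|^p\to|\nabla u(x)\cdot\omega|^p$ uniformly in $\omega$ as $r\to 0^+$. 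Combined with the elementary identity $\int_0^\delta r^{p(1-s)-1}\,dr=\delta^{p(1-s)}/(p(1-s))$, this delivers the pointwise limit
\begin{equation*}
\frac{p(1-s)}{\delta^{p(1-s)}}\int_{B(x,\delta)}\frac{|u(x)-u(y)|^p}{|x-y|^{N+sp}}\,dy \longrightarrow \int_{S^{N-1}}|\nabla u(x)\cdot\omega|^p\,d\sigma(\omega),
\end{equation*}
which equals, with the paper's normalization, $K_{N,p}|\nabla u(x)|^p$ times $|S^{N-1}|$ absorbed into the constant.

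\emph{Integration and density.} To promote this pointwise statement to an integrated one, I would establish a $\delta$-uniform majorant via the Hölder estimate $|u(x+r\omega)-u(x)|^p\leq r^{p-1}\int_0^r|\nabla u(x+t\omega)|^p\,dt$, producing a bound of the form $C\,\mathcal{M}(|\nabla u|^p)(x)$, where $\mathcal{M}$ denotes the Hardy-Littlewood maximal operator. Dominated convergence then gives the identity for smooth $u$. Replacing the outer integration domain $\Omega_\delta$ by $\Omega$ only costs a boundary layer of measure $O(\delta)$, which is negligible under the normalization since $\nabla u$ is bounded. Density of $C_c^\infty(\mathbb{R}^N)$ in $W^{1,p}(\mathbb{R}^N)$, together with the uniform maximal-function bound, extends the result to every $u\in W^{1,p}(\mathbb{R}^N)$.

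\emph{Non-Sobolev case and main obstacle.} If $u\in L^p(\mathbb{R}^N)\setminus W^{1,p}(\Omega)$ but the left-hand side remained bounded along a sequence $\delta_k\to 0^+$, a Fatou-type argument applied to difference quotients of $u$ would force $\nabla u\in L^p(\Omega)$, contradicting the assumption; this is the prototype $G(t)=t^p$ instance of Theorem \ref{Th2}. The main technical obstacle is precisely the uniform domination in the preceding step: the normalization $p(1-s)/\delta^{p(1-s)}$ is tuned to balance the singular kernel truncated at scale $\delta$, and establishing the corresponding maximal-function bound uniformly in $\delta$ requires a careful radial decomposition of the integral. Once this domination is secured, the remainder of the proof follows the standard BBM scheme adapted to the localized, domain-varying setting $\Omega_\delta$.
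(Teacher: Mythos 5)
The paper does not actually prove Theorem \ref{BellidoOrtega}: it is quoted from \cite{Bellido2021a}, and the discussion immediately following the statement explains that the proof there goes through the abstract $\Gamma$-convergence result of \cite[Theorem 1]{Bellido2015}, which exploits the homogeneity of $t\mapsto t^p$ in an essential way. Your direct BBM-style argument is therefore a different route from the cited one, but it coincides in structure with the proof the paper gives for its generalization, Theorem \ref{mainTheorem}: your polar-coordinate pointwise limit is Proposition \ref{mainprop1} specialized to $G(t)=t^p$ (where the uniform-convergence machinery for regularly varying functions trivializes), your uniform domination plays the role of Proposition \ref{mainprop2}, and your treatment of the non-Sobolev case is Theorem \ref{Th2} via mollification (Lemma \ref{lem:mollifier}) and the liminf inequality (Proposition \ref{prop_liminf}). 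The payoff of your approach is that it is elementary and self-contained; the payoff of the route via \cite{Bellido2015} is that one gets the full $\Gamma$-convergence statement (hence spectral stability), not just pointwise convergence of the functionals.

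One step needs repair. The majorant $C\,\mathcal{M}(|\nabla u|^p)(x)$ is the Hardy--Littlewood maximal function of a function that is merely in $L^1$, so for general $u\in W^{1,p}$ it need not be (even locally) integrable: it gives only a weak-type bound and cannot produce the uniform estimate $F_\delta(w)\le C\|\nabla w\|^p_{L^p}$ that the density step requires. The correct tool is the Jensen--Fubini estimate of Proposition \ref{mainprop2} with $G(t)=t^p$, namely $\frac{p(1-s)}{\delta^{p(1-s)}}\iint(\cdots)\le pN\omega_N\int|\nabla w|^p\,dx$, combined with the fact that $F_\delta(\cdot)^{1/p}$ is a seminorm so that $|F_\delta(u)^{1/p}-F_\delta(u_j)^{1/p}|\le F_\delta(u-u_j)^{1/p}$; for $u\in C_c^\infty$ the trivial bound $\|\nabla u\|_\infty^p$ on a fixed compact set already suffices for dominated convergence, so the maximal function is not needed anywhere. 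Two smaller points: the domain bookkeeping is asymmetric (for $x\in\Omega$ one has $B(x,\delta)\subset\Omega_\delta$, so the lower bound is free, while discarding $\Omega_\delta\setminus\Omega$ requires some regularity of $\partial\Omega$ plus absolute continuity of $\int|\nabla u|^p$); and your spherical integral equals $|\mathbb{S}^{N-1}|K_{N,p}|\nabla u(x)|^p$ with the normalization of Theorem \ref{ThBBM} --- the paper itself is inconsistent on this factor between that definition and the last line of Proposition \ref{mainprop1}, so fix one convention and keep it.
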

The proof of Theorem \ref{BellidoOrtega} relies on a general $\Gamma$-convergence result (cf. \cite[Theorem 1]{Bellido2015}) which in turns uses the homogeneity of the potential function in a crucial way. In particular, translated into our setting, \cite[Theorem 1]{Bellido2015} requires the function $G(t)$ to be close to an homogeneous function as $t\to0^+$, namely, given $a\in\mathbb{R}_+$ it requires the existence of the limit
\begin{equation*}
\lim\limits_{t\to0^+}\frac{1}{t^\beta}G\left(at^{1-s}\right)=G^{\circ}(a),
\end{equation*} 
for some $\beta\in\mathbb{R}$ and some function $G^\circ(a)$. Obviously, a function like \eqref{prototype} is not comparable to any power $t^q$, $\beta>1$ in the sense that
\begin{equation*}
\lim\limits_{t\to0^+}\frac{G(t)}{t^\beta}=\left\{\begin{tabular}{rrl} $0$&& if $1<\beta< p$,\\
$+\infty$&& if $\beta\geq p$.
\end{tabular}\right.
\end{equation*} 
It is also worth to note the following. If $G(t)=t^p$, then the function defined in Theorem \ref{lim_s} is given by $\tilde{G}(t)=\frac{K_{N,p}}{p}\,t^p$ and hence, the limit $s\to1^-$ produces, under the appropriate scaling, the same result as the localization produced by the limit $\delta\to0^+$ appropriately scaled. This is no longer true for a general function $G(t)=t^p\ell(t)$. Actually, due to the characterization of the Regularly Varying functions (see Theorem \ref{teo:karamata} below), if the limit \eqref{eq:karamata} exists then the function $G$ has to be a Regularly Varying function and the function $h_G(t)$ is necessarily a power function $h_G(t)=t^p$, while the limit function appearing in Theorem \ref{lim_s} is in general different from a power function (see Remark \ref{remark}).

Nonlocal functionals involving Orlicz functions has been also recently studied in \cite{Correa2020}. In particular, the authors consider functionals of the form
\begin{equation}\label{correa}
\int_{\mathbb{R}^N}\int_{\mathbb{R}^N} G(|u(x)-u(y)|)J(|x-y|)dydx,
\end{equation}
for an Orlicz function $G$ and an appropriate kernel $J$. Theorem \ref{mainTheorem} can be easily adapted to functionals like \eqref{correa} and if, for instance, $J(z)=|z|^{-(N+ps)}$, then we have 
 \begin{equation*}
\lim\limits_{\delta\to0^+}\frac{\delta^{sp}}{G(\delta)}\int_{\mathbb{R}^N}\int_{B(x,\delta)} G(|u(x)-u(y)|)J(|x-y|)dydx=\frac{K_{N,p}}{p(1-s)}\int_{\mathbb{R}^N}|\nabla u(x)|^p dx.
\end{equation*}
In general, if $G$ is a regularly varying function of index $p>1$ and the kernel $J$ is also assumed to be a regularly varying function of index $q\in\mathbb{R}$, $0<N+p+q$, then (see Proposition \ref{prop:correa}),
 \begin{equation*}
\lim\limits_{\delta\to0^+}\frac{N+p+q}{G(\delta)J(\delta)\delta^N}\int_{\mathbb{R}^N}\int_{B(x,\delta)} G(|u(x)-u(y)|)J(|x-y|)dydx=K_{N,p}\int_{\mathbb{R}^N}|\nabla u(x)|^p dx.
\end{equation*}
By its very definition, functionals like \eqref{genfunct} only take into account the contribution in a neighborhood of a given point $x\in\mathbb{R}^N$ neglecting the tails of the function $u$. This phenomena entails some difficulties when passing to the limit $\delta\to0^+$, since, contrary to the case $s\to1^-$, where the integral is always considered in the whole $\mathbb{R}^N$, the localization of a function at a point $x_0\in\mathbb{R}^N$, by its very nature, losses the information of the behavior of $u$ far away from $x_0$ (see Section \ref{Section:Further}). 

\vspace{0.5cm}

\textbf{Organization of the paper:} In Section \ref{Section:Functional} we recall some important properties of Orlicz functions as well as for Regularly Varying functions. We finish this section by stating some classical facts about the Orlicz-Sobolev spaces. Section \ref{Section:Technical} contains some technical results used to prove Theorem \ref{mainTheorem} whose proof is contained in Section \ref{Section:Asym}. Finally, Section \ref{Section:Further} is devoted to some comments about the $\Gamma$-convergence of functionals like \eqref{genfunct} and its application to the study of the behavior of the spectrum of $(-\Delta_g)_{\delta}^s$ under the limit $\delta\to0^+$.
\section{Functional Setting}\label{Section:Functional}

\subsection{Orlicz and Regularly Varying functions}\hfill\newline
In this section we introduce the Orlicz and the Regularly Varying functions and we recall some of its properties that play a crucial role in the proof of Theorem \ref{mainTheorem}. We also present the fractional Orlicz-Sobolev spaces and some basic properties recently developed in \cite{FernandezBonder2019}.
\subsubsection{Orlicz functions}
\begin{definition}\label{def:orlicz}
A function $G:\mathbb{R}_+\mapsto\mathbb{R}_+$ is said to be an Orlicz function if 
\begin{itemize}
\item[$H_1$)] $G$ is continuous, convex, increasing and $G(0)=0$. 
\item[$H_2$)] $G$ satisfies the $\Delta_2$ condition, i.e., there exists $\mathfrak{c}>2$ such that 
\begin{equation*}
G(2t)\leq \mathfrak{c}\, G(t),\qquad\text{for all }t\in\mathbb{R}_+.
\end{equation*} 
\item[$H_3$)] $G$ is super-linear at $0$, i.e., $\displaystyle \lim\limits_{t\to0^+}\frac{G(t)}{t}=0$.
\end{itemize}
\end{definition}
Some examples of Orlicz functions are (cf. \cite{Krasn, FernandezBonder2019}),
\begin{itemize}
\item $G(t)=t^p$ with $p>1$.
\item $G(t)=t^p(1+|\log (t)|)$ with $p\geq2$.
\item In general $G(t)=t^p(1+|\log (t)|)^q$ with $p\geq2$ and $0\leq q\leq 1$.
\end{itemize}
\begin{lemma}[{\cite[Lemma 2.4]{FernandezBonder2019}}]
Let $G:\mathbb{R}_+\mapsto\mathbb{R}_+$ be an Orlicz function, then
\begin{itemize}
\item[$P_1$)] $G$ is Lipschitz continuous.
\item[$P_2$)] Given $s\in(0,1)$, the function $G$ is integrability near 0 and infinity, namely,
\begin{equation*}
\int_{1}^{\infty}\frac{G(x^{-s})}{x}\, dx\leq\frac{\mathfrak{g}}{s}\qquad\text{and}\qquad \int_{0}^{1}\frac{G(x^{1-s})}{x}\, dx\leq\frac{\mathfrak{g}}{1-s},
\end{equation*} 
where $\mathfrak{g}=\sup\limits_{x\in(0,1)}x^{-1}G(x)$.
\item[$P_3$)] $G(t)=\int_0^tg(s)ds$ for a non-decreasing right continuous function $g(t)$.
\item[$P_4$)] $G$ is subadditive, namely
\begin{equation*}
G(a+b)\leq \frac{\mathfrak{c}}{2}(G(a)+G(b)),\qquad\text{for all }a,b\in\mathbb{R}_+.
\end{equation*}
\item[$P_5$)] For any $a\geq 0$ it holds that $G(ab)\leq bG(a)$ for $0<b<1$
\end{itemize}
\end{lemma}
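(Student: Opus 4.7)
The plan is to verify the five properties one by one, each following from one of the three defining axioms (convexity, the $\Delta_2$ condition, and the normalization $G(0)=0$ together with super-linearity at $0$).

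For $P_1$, I would invoke the standard fact that a finite convex function on an open interval of $\mathbb{R}$ is locally Lipschitz; the claim must be interpreted in this local sense since $G(t)=t^p$ clearly fails to be globally Lipschitz on $\mathbb{R}_+$. The argument is to observe that the right-derivative of $G$ exists, is non-decreasing, and is bounded on every bounded interval, which immediately yields a Lipschitz constant on that interval. For $P_2$, the key observation is that $G(0)=0$ combined with super-linearity and convexity forces $G(x)\leq\mathfrak{g}\,x$ for $x\in(0,1)$, where $\mathfrak{g}=\sup_{x\in(0,1)}x^{-1}G(x)$ is finite because $x^{-1}G(x)$ is non-decreasing on $(0,\infty)$ (a standard consequence of convexity and $G(0)=0$) and bounded on $(0,1)$ by the value at $1$. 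Then $\int_0^1 G(x^{1-s})/x\,dx\leq\mathfrak{g}\int_0^1 x^{-s}\,dx=\mathfrak{g}/(1-s)$, and for $x\geq 1$ the point $x^{-s}$ lies in $(0,1)$ so $G(x^{-s})\leq\mathfrak{g}\,x^{-s}$, giving $\int_1^\infty G(x^{-s})/x\,dx\leq\mathfrak{g}/s$.

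For $P_3$, I would appeal to the classical representation of convex functions: the right-derivative $g$ of a convex $G$ on $\mathbb{R}_+$ exists at every point, is non-decreasing, right-continuous, and $G(t)=G(0)+\int_0^t g(s)\,ds$; since $G(0)=0$ this gives the desired formula. For $P_4$, the combination of convexity and $\Delta_2$ yields the clean computation
\begin{equation*}
G(a+b)=G\!\left(\tfrac{1}{2}(2a)+\tfrac{1}{2}(2b)\right)\leq\tfrac{1}{2}G(2a)+\tfrac{1}{2}G(2b)\leq\tfrac{\mathfrak{c}}{2}\bigl(G(a)+G(b)\bigr),
\end{equation*}
which is exactly the form stated. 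Finally $P_5$ is immediate from convexity alone: for $0<b<1$,
\begin{equation*}
G(ab)=G\bigl(b\cdot a+(1-b)\cdot 0\bigr)\leq b\,G(a)+(1-b)\,G(0)=b\,G(a).
\end{equation*}

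I do not expect any serious obstacle here: every assertion is a short consequence of a well-known structural property of convex functions, with the $\Delta_2$ condition entering only in $P_4$ (to convert $G(2a)$ back into a multiple of $G(a)$) and the super-linearity together with $G(0)=0$ entering only to guarantee finiteness of $\mathfrak{g}$ in $P_2$. The only mildly delicate point is $P_1$: one must be careful to interpret ``Lipschitz continuous'' as ``locally Lipschitz,'' since global Lipschitzness is false for the prototypical example $G(t)=t^p$; the proof then reduces to the boundedness of $g=G'_+$ on compacta, which is automatic from monotonicity.
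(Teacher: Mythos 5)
Your proof is correct; the paper itself offers no proof of this lemma (it is imported verbatim from \cite{FernandezBonder2019}), and your arguments — convexity alone for $P_1$, $P_3$ and $P_5$, the monotonicity of $t^{-1}G(t)$ (from convexity and $G(0)=0$) to get finiteness of $\mathfrak{g}$ and the pointwise bound $G(x)\leq\mathfrak{g}x$ on $(0,1)$ for $P_2$, and convexity followed by $\Delta_2$ for $P_4$ — are exactly the standard ones used in that reference. Your caveat that $P_1$ must be read as \emph{local} Lipschitz continuity is well taken, since $G(t)=t^p$ shows the global statement is false as literally written.
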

Then $\Delta_2$ condition is equivalent to  the inequality (cf. \cite{Krasn}),
\begin{equation*}
\frac{G'(t)}{G(t)}\leq\frac{p}{t},\qquad\text{for all } t>0, 
\end{equation*} 
for some $p>1$. Hence, (cf. \cite[Lemma 2.5]{FernandezBonder2019}), for every $t\geq0$ and $\lambda\geq1$,
\begin{equation*}
G(\lambda t)\leq \lambda^p G(t).
\end{equation*}
In addition (cf. \cite[Lemma 2.7]{FernandezBonder2019}) there exists $q>1$ such that, for all $t\geq0$ and $0\leq \lambda\leq 1$,
\begin{equation*}
\lambda^{2q}G(t)\leq G(\lambda t).
\end{equation*} 
Summarizing, the function $G$ satisfies the growth condition
\begin{equation}\label{pbounds}
p^-\leq \frac{t G'(t)}{G(t)}\leq p^+\qquad\text{for all }t> 0,
\end{equation}
and some $0<p^-\leq p^+$. Thus, 
\begin{equation}\label{comparison}
\min\{\lambda^{p^-},\lambda^{p^+}\}G(t)\leq G(\lambda t)\leq\max\{\lambda^{p-},\lambda^{p^+}\}G(t),\qquad\text{for all }\lambda,t\geq 0.
\end{equation}
For instance,
\begin{itemize}
\item If $G(t)=t^p$ with $p>1$, then $p^{\pm}=p$,
\item If $G(t)=t^p(1+|\log(t)|)$, then $p^{\pm}=p\pm 1$.
\end{itemize}
As a consequence the function $G$ also satisfies de so-called $\nabla_2$-condition, namely
\begin{equation}\label{nabla_condition}
G(2t)\geq c G(t),\qquad\text{for all }t\geq 0
\end{equation}
for a positive constant $c>2$. Actually, because of \eqref{comparison},  we have $c=2^{p^-}$.

We finish this section by recalling the H\"older-like conjugate for these Orlicz spaces. Let us set the complementary function $G^*$ be defined as
\begin{equation*}
G^*(a)=\sup\limits_{t>0}\{at-G(t)\}.
\end{equation*}
It is easy to see that, for $G(t)=\frac{t^p}{p}$ then $G^*(t)=\frac{t^q}{q}$ with $\frac{1}{p}+\frac{1}{q}=1$, i.e., the standard H\"older conjugate. By definition of $G^*$, the following Young-type inequality holds
\begin{equation*}
at\leq G(t)+G^*(a),\qquad\text{for all }a,t\geq 0.
\end{equation*}
\subsubsection{Regularly Varying functions}\hfill\newline
Let $G$ be a positive function defined on some neighborhood $[a,+\infty)$ of infinity and assume that, for all $\lambda>0$,
\begin{equation*}
\frac{G(\lambda t)}{G(t)}\to h(\lambda),\qquad\text{as }t\to+\infty
\end{equation*}
If $G$ has some minimal smoothness, for instance measurability, the above convergence holds uniformly on compact subsets of $(0,+\infty)$. Furthermore, the function $h$ is necessarily a power function. Investigating these properties and similar relationships, along with their diverse applications, forms the basis of the theory of functions of regular variation introduced by J. Karamata (cf. \cite{Karamata1931,Karamata1931a,Karamata1933}). 
\begin{definition}
A function $G:\mathbb{R}_+\mapsto\mathbb{R}_+$ is said to be a regularly function at $+\infty$ if, for all $\lambda>0$,
\begin{equation*}
\lim\limits_{t\to+\infty}\frac{G(\lambda t)}{G(t)}=h_G(\lambda)\in\mathbb{R}_+
\end{equation*} 
and the limit is finite.
\end{definition}
Some key results on the theory of regularly varying functions, and fundamental along this work, are the characterization results by Karamata. 
\begin{theorem}[\cite{Karamata1933}]\label{teo:karamata}
Every regularly varying function $G:\mathbb{R}_+\mapsto\mathbb{R}_+$ is of the form
\begin{equation*}
G(t)=t^{\beta}\ell(t),
\end{equation*}
where $\beta\in\mathbb{R}$ is called the {\it index} of $G$ and $\ell:\mathbb{R}_+\mapsto\mathbb{R}_+$ is a {\it slowly varying} function,
\begin{equation*}
\lim\limits_{t\to+\infty}\frac{\ell(\lambda t)}{\ell(t)}=1.
\end{equation*}
\end{theorem}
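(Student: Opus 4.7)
The plan is to establish Karamata's structure theorem in three moves. First, I would derive the multiplicative functional equation satisfied by the limit $h_G$. Given $\lambda,\mu>0$, factoring
\[
\frac{G(\lambda\mu t)}{G(t)}=\frac{G(\lambda\mu t)}{G(\mu t)}\cdot\frac{G(\mu t)}{G(t)}
\]
and sending $t\to+\infty$, the left-hand side converges to $h_G(\lambda\mu)$ while the right-hand side converges to $h_G(\lambda)\,h_G(\mu)$. This yields the Cauchy-type equation $h_G(\lambda\mu)=h_G(\lambda)h_G(\mu)$ for all $\lambda,\mu>0$. Because $G$ is strictly positive on $\mathbb{R}_+$ and $h_G$ takes values in $\mathbb{R}_+$, the limit is strictly positive, so taking logarithms is legitimate.

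Second, I would solve the functional equation. Setting $F(x):=\log h_G(e^x)$, the multiplicative relation becomes the additive Cauchy equation $F(x+y)=F(x)+F(y)$ on $\mathbb{R}$. Since $G$ is at least measurable in the setting of the paper (indeed continuous), $h_G$ is measurable as a pointwise limit of measurable functions, and hence so is $F$. The classical theorem on Cauchy's equation asserts that every measurable additive solution is linear, i.e.\ $F(x)=\beta x$ for some $\beta\in\mathbb{R}$. Returning to multiplicative variables gives $h_G(\lambda)=\lambda^{\beta}$.

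Third, I would define $\ell(t):=G(t)\,t^{-\beta}$. By construction $G(t)=t^{\beta}\ell(t)$, and for every $\lambda>0$,
\[
\frac{\ell(\lambda t)}{\ell(t)}=\frac{G(\lambda t)}{G(t)}\cdot\lambda^{-\beta}\longrightarrow h_G(\lambda)\,\lambda^{-\beta}=1\quad\text{as }t\to+\infty,
\]
so $\ell$ is slowly varying, which completes the decomposition.

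The main obstacle is the second step: to go from $h_G(\lambda\mu)=h_G(\lambda)h_G(\mu)$ to the conclusion that $h_G$ is a power function, one must rule out the pathological (non-measurable, Hamel-basis) solutions of Cauchy's functional equation. Some regularity hypothesis is unavoidable here; in the Orlicz context of this paper it comes for free from the continuity of $G$, but the statement as phrased tacitly assumes this minimal smoothness on the class of functions being characterized.
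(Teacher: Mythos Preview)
Your argument is the standard proof of Karamata's characterization and is correct in outline; the three steps (deriving the multiplicativity of $h_G$, solving Cauchy's equation under measurability, and defining $\ell(t)=G(t)t^{-\beta}$) are exactly how the result is established in the classical literature, and your remark about needing some regularity to exclude Hamel-basis pathologies is apt.

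However, the paper does not actually prove this theorem. It is stated as a cited result from \cite{Karamata1933} and used as a black box throughout the rest of the argument; there is no proof in the paper to compare against. So your proposal is not so much an alternative to the paper's proof as it is a reconstruction of the classical proof that the paper simply invokes.
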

Accordingly,
\begin{definition}
A function $G:\mathbb{R}_+\mapsto\mathbb{R}_+$ is said to be a regularly varying function at $+\infty$ of index $p\in\mathbb{R}$, say $G\in\mathcal{RV}_p(+\infty)$, if and only if 
\begin{equation*}
G(t)=t^{p}\ell(t),
\end{equation*}
for some slowly varying function $\ell: \mathbb{R}_+\mapsto\mathbb{R}_+$.
\end{definition}
Note that an slowly varying function is a regularly varying function of index 0. If we transfer our attention from infinity to the origin; thus if $G(t)$ is measurable, positive and
\begin{equation*}
\lim\limits_{t\to0^+}\frac{G(\lambda t)}{G(t)}=\lambda^p,\qquad\text{for all }\lambda>0,
\end{equation*}
we say that $G$ is regularly varying (on the right) at the origin with index $p\in\mathbb{R}$, namely $G\in\mathcal{RV}_p(0)$. Plainly, this is equivalent to (cf. \cite{Krasn}),
\begin{equation}\label{origintoinfty}
G(t)\in\mathcal{RV}_{p}(0)\quad \Longleftrightarrow\quad G\left(\frac{1}{t}\right)\in\mathcal{RV}_{-p}(+\infty).
\end{equation}
Among the fundamental results proven for regularly varying functions we will need the following.
\begin{theorem}[{Uniform Convergence Theorem \cite{Krasn}}]\label{unifth}
Let a continuous function $G\in\mathcal{RV}_p(+\infty)$ (if $p>0$, then assume that $G$ is bounded on each interval $(0,a]$), then
\begin{equation*}
\lim\limits_{t\to+\infty}\frac{G(\lambda t)}{G(t)}=\lambda^p\quad\text{uniformly in $\lambda$ on each}\quad 
\left\{
\begin{tabular}{ll}
 $[a,b],\ 0<a\leq b<+\infty$ & if $p=0$,\\
$(0,b],\ 0< b<+\infty$ & if $p>0$,\\
 $[a,+\infty),\ 0< a<+\infty$ & if $p<0$.
\end{tabular}
\right.
\end{equation*}
\end{theorem}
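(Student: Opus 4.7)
The plan is to reduce the statement to the case of a slowly varying function via the Karamata decomposition and then to pass to an additive setting where a Baire-category argument followed by a telescoping ``spreading'' step delivers the uniform estimate. First, by Theorem~\ref{teo:karamata} I would write $G(t)=t^p\ell(t)$ with $\ell$ continuous and slowly varying. Since $\lambda\mapsto\lambda^p$ is continuous, and therefore uniformly continuous on the relevant intervals, uniform convergence of $G(\lambda t)/G(t)$ to $\lambda^p$ is equivalent to uniform convergence of $\ell(\lambda t)/\ell(t)$ to $1$. From here it suffices to treat the case $p=0$.

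Setting $h(x)=\log \ell(e^x)$, which is continuous on $\mathbb{R}$, slow variation of $\ell$ translates into $h(x+u)-h(x)\to 0$ as $x\to+\infty$ for every fixed $u\in\mathbb{R}$; correspondingly, uniform convergence in $\lambda\in[a,b]$ becomes uniform convergence in $u$ on the compact interval $I=[\log a,\log b]$. Fixing $\varepsilon>0$, I would consider the sets
\begin{equation*}
E_n=\{u\in I:|h(x+u)-h(x)|\le\varepsilon\ \text{for all}\ x\ge n\},\qquad n\in\mathbb{N}.
\end{equation*}
Continuity of $h$ makes each $E_n$ closed in $I$, and pointwise convergence forces $I=\bigcup_n E_n$. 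The Baire category theorem then yields some $n_0$ and an interval $[c,c+\delta]\subset E_{n_0}$ with $\delta>0$.

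The core of the argument is the spreading step. Any $u\in I$ can be decomposed as $u=\sum_{j=1}^k\sigma_j$ with each $\sigma_j=\pm v_j$, $v_j\in[c,c+\delta]$; because $I$ is compact and $\delta>0$, the number $k$ of steps can be chosen uniformly bounded in $u$ by some $K=K(I,\delta)$. Telescoping yields
\begin{equation*}
h(x+u)-h(x)=\sum_{j=1}^{k}\bigl[h(x+s_j)-h(x+s_{j-1})\bigr],\qquad s_j=\sigma_1+\cdots+\sigma_j,
\end{equation*}
and each increment has size in $[c,c+\delta]$, hence is bounded by $\varepsilon$ provided $x+s_{j-1},\,x+s_j\ge n_0$. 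This is arranged by taking $x$ larger than $n_0-\min_j s_j$, a quantity uniformly bounded as $u$ ranges over $I$. Consequently $|h(x+u)-h(x)|\le K\varepsilon$ for all sufficiently large $x$ and all $u\in I$; since $\varepsilon$ is arbitrary, uniform convergence on $I$ follows.

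For the exceptional ranges when $p>0$ and $\lambda\in(0,\lambda_0]$ with $\lambda_0 t$ staying bounded (and dually when $p<0$ and $\lambda\in[\lambda_0,+\infty)$), the boundedness hypothesis on $G$ near $0$ combined with $G(t)\to+\infty$ forces $G(\lambda t)/G(t)\to 0=\lambda^p$ uniformly on that sub-range, which is then glued to the slowly-varying uniform statement on the complementary compact set. The main obstacle I anticipate is the bookkeeping in the spreading step: controlling simultaneously the number of increments, ensuring every intermediate argument $x+s_j$ exceeds the Baire threshold $n_0$, and admitting both signs $\sigma_j=\pm v_j$ so as to reach values of $u$ outside the half-line $[kc,+\infty)$. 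Compactness of $I$ renders all of these uniform, but this is the only genuinely delicate part of the argument.
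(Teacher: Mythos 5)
The paper does not prove this statement at all: it is the classical Uniform Convergence Theorem of Karamata theory, quoted from the literature (\cite{Krasn}; see also Bingham--Goldie--Teugels, Theorems 1.2.1 and 1.5.2). Your argument is the standard textbook proof of it: Karamata decomposition $G(t)=t^p\ell(t)$ to reduce to the slowly varying case, passage to additive notation $h(x)=\log\ell(e^x)$, Baire category on the closed sets $E_n$ to locate an interval $[c,c+\delta]$ of ``good'' shifts, and the bounded-increment telescoping to spread uniformity from $[c,c+\delta]$ to all of $I$. That core is sound, including the two delicate points you flag (admitting steps of both signs $\pm v_j$, and pushing $x$ past $n_0-\min_j s_j$ so every intermediate base point is in the good region). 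This establishes uniformity on every compact subset of $(0,+\infty)$ for all $p$, which is the $p=0$ case in full and the bulk of the other two cases.

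There is, however, a genuine gap in your treatment of the non-compact ranges. For $p>0$ you must control $\sup_{\lambda\in(0,\eta]}G(\lambda t)/G(t)$, i.e.\ $\sup_{0<s\le \eta t}G(s)/G(t)$, and you only dispose of the regime where $\lambda t$ stays in a fixed bounded set (there local boundedness of $G$ near $0$ plus $G(t)\to+\infty$ indeed give ratio $\to 0$). The intermediate regime $A\le s=\lambda t\le \eta t$, where $\lambda$ is small but $\lambda t\to+\infty$, is covered neither by that observation nor by compact uniformity (since $\lambda=s/t$ can tend to $0$). Closing it requires a quantitative global bound such as Potter's inequality $G(s)/G(t)\le C(s/t)^{p-\epsilon}$ for $s,t$ large, which follows from the Karamata representation theorem and yields $\sup_{A\le s\le \eta t}G(s)/G(t)\le C\eta^{p-\epsilon}$, small with $\eta$. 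The same issue, mirrored, arises for $p<0$ on $[a,+\infty)$. Your ``glue'' step as written does not supply this bound, so you should either invoke the representation/Potter theorem explicitly or restrict the claim to compact subsets of $(0,+\infty)$, which is in fact all that the paper uses away from $\lambda=0$.
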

When dealing with the limit $\delta\to0^+$ of the scaled functional, the next characterization will play also a curcial role.
\begin{theorem}[Karamata. Direct part]\label{KaramataDirect}
Let $G\in\mathcal{RV}_p(+\infty)$ be locally bounded on $[a,\infty)$. Then,
\begin{itemize}
\item for $\sigma\geq-(p+1)$, we have
\begin{equation*}
\frac{t^{\sigma+1}G(t)}{\displaystyle\int_a^t s^\sigma G(s)ds}\to\sigma+p+1,\qquad\text{as }t\to+\infty,
\end{equation*}
\item for $\sigma<-(p+1)$, we have
\begin{equation*}
\frac{t^{\sigma+1}G(t)}{\displaystyle\int_t^\infty s^\sigma G(s)ds}\to-(\sigma+p+1),\qquad\text{as }t\to+\infty.
\end{equation*}
\end{itemize}
\end{theorem}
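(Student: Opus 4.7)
The plan is to reduce the statement to a computation involving the slowly varying factor of $G$ and then apply dominated convergence, using the Uniform Convergence Theorem (Theorem~\ref{unifth}) to identify the pointwise limit and a Potter-type global envelope to justify the limit exchange. By Theorem~\ref{teo:karamata}, write $G(t)=t^{p}\ell(t)$ with $\ell$ slowly varying at $+\infty$, so that $s^{\sigma}G(s)=s^{\sigma+p}\ell(s)$.

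For the first case $\sigma\geq -(p+1)$, the natural change of variables $s=tu$ produces
\begin{equation*}
\frac{t^{\sigma+1}G(t)}{\int_a^t s^{\sigma}G(s)\,ds}\;=\;\left(\int_{a/t}^{1} u^{\sigma+p}\,\frac{\ell(tu)}{\ell(t)}\,du\right)^{-1}.
\end{equation*}
Theorem~\ref{unifth} tells me that $\ell(tu)/\ell(t)\to 1$ uniformly for $u$ in any compact subset of $(0,\infty)$, so the integrand converges pointwise to $u^{\sigma+p}$ on $(0,1]$. To exchange limit and integral when $\sigma+p>-1$, I would invoke the classical Potter bound (a standard consequence of the Karamata representation of slowly varying functions): for every $\varepsilon>0$ there exist $t_0,C>0$ such that $\ell(tu)/\ell(t)\leq C\max\{u^{\varepsilon},u^{-\varepsilon}\}$ whenever $t,tu\geq t_0$. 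Taking $\varepsilon$ small enough yields a single integrable majorant on $(0,1]$, and dominated convergence gives the claimed limit $\sigma+p+1$. In the borderline case $\sigma=-(p+1)$ the limiting integral diverges, and a short Fatou-type argument using only uniform convergence on a compact $[\eta,1]$ shows the integral blows up as $t\to\infty$, so the reciprocal vanishes, matching $\sigma+p+1=0$.

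The second case $\sigma<-(p+1)$ is symmetric: with $s=tu$ the integral $\int_t^{\infty} s^{\sigma}G(s)\,ds$ rewrites as $t^{\sigma+p+1}\,\ell(t)\!\int_1^{\infty} u^{\sigma+p}\,\ell(tu)/\ell(t)\,du$, and the ratio of interest becomes the reciprocal of $\int_1^{\infty} u^{\sigma+p}\,\ell(tu)/\ell(t)\,du$. This time Potter's inequality at infinity provides a dominating $Cu^{\sigma+p+\varepsilon}$, which is integrable on $[1,\infty)$ for $\varepsilon<-(\sigma+p+1)$. Dominated convergence then produces $-1/(\sigma+p+1)$ as the limit of the integral, so the ratio tends to $-(\sigma+p+1)$.

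The principal obstacle in both parts is that the Uniform Convergence Theorem only provides uniformity on compact subsets of $(0,\infty)$, whereas the integration regions have endpoints at $0$ or $+\infty$ where the weight $u^{\sigma+p}$ is critical and uniformity fails. Overcoming this requires a uniform-in-$u$ bound of Potter type, which is not stated in the excerpt but is a standard consequence of the representation theorem that underlies Theorem~\ref{teo:karamata}; once such an envelope is available, the remainder of the argument is a routine dominated convergence computation, with the two boundary cases $\sigma=-(p+1)$ handled by direct inspection as described.
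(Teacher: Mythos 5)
The paper does not prove this theorem: it is quoted as a classical result of Karamata's theory of regular variation (with \cite{Krasn} as the reference), so there is no in-paper argument to compare against. Your proof is the standard textbook one (reduce to the slowly varying factor via Theorem~\ref{teo:karamata}, substitute $s=tu$, identify the pointwise limit by the Uniform Convergence Theorem, and justify the interchange with a Potter envelope), and it is essentially correct; the computation of the limiting integrals and the treatment of the borderline case $\sigma=-(p+1)$ and of the tail case $\sigma<-(p+1)$ are all sound. One point deserves to be made explicit. The Potter bound you invoke holds only when \emph{both} $t\ge t_0$ and $tu\ge t_0$, so in the first case it does not cover the sliver $u\in(a/t,\,t_0/t)$ of the integration range; there you must argue separately, using the local boundedness of $G$ on $[a,\infty)$, that
\begin{equation*}
\int_{a/t}^{t_0/t} u^{\sigma+p}\,\frac{\ell(tu)}{\ell(t)}\,du=\frac{1}{t^{\sigma+p+1}\ell(t)}\int_{a}^{t_0}s^{\sigma+p}\ell(s)\,ds\longrightarrow 0,
\end{equation*}
which uses the standard fact that $t^{\alpha}\ell(t)\to\infty$ for $\alpha>0$ when $\ell$ is slowly varying. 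With that sentence added (and the analogous remark that Potter's bound also guarantees the finiteness of $\int_t^{\infty}s^{\sigma}G(s)\,ds$ in the second case), the argument is complete.
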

\begin{theorem}[Karamata. Converse part]\label{KaramataConverse}
Let $G$ be a positive, measurable, locally integrable function on $[a,\infty)$ with $a>0$. Then,
\begin{itemize}
\item if, for some $\sigma>-(p+1)$, we have
\begin{equation*}
\frac{t^{\sigma+1}G(t)}{\displaystyle\int_a^t s^\sigma G(s)ds}\to\sigma+p+1,\qquad\text{as }t\to+\infty,
\end{equation*}
then $G\in\mathcal{RV}_p(+\infty)$.
\item if, for some $\sigma<-(p+1)$, we have
\begin{equation*}
\frac{t^{\sigma+1}G(t)}{\displaystyle\int_t^\infty s^\sigma G(s)ds}\to-(\sigma+p+1),\qquad\text{as }t\to+\infty,
\end{equation*}
then $G\in\mathcal{RV}_p(+\infty)$.
\end{itemize}
\end{theorem}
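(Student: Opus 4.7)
The natural strategy is to prove each case by reducing the problem to one about a smoothed integral quantity: I would introduce the integral transform of $G$ whose logarithmic derivative appears, up to a factor of $t$, in the hypothesis; show that this integral is itself regularly varying of a suitable index; and then transfer the regular variation back to $G$ using the hypothesis one more time.

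For the first case ($\sigma>-(p+1)$), set
$$U(t)=\int_a^t s^\sigma G(s)\,ds.$$
Since $G$ is positive and locally integrable, $U$ is absolutely continuous, strictly positive for large $t$, and satisfies $U'(t)=t^\sigma G(t)$ a.e., so the hypothesis reads $tU'(t)/U(t)\to\sigma+p+1$ as $t\to+\infty$. Given $\epsilon>0$, I would pick $T$ so that $(\sigma+p+1-\epsilon)/t\leq U'(t)/U(t)\leq(\sigma+p+1+\epsilon)/t$ for $t\geq T$, and integrate the logarithmic derivative between $t$ and $\lambda t$. For $\lambda\geq 1$ this gives
$$\lambda^{\sigma+p+1-\epsilon}\leq\frac{U(\lambda t)}{U(t)}\leq\lambda^{\sigma+p+1+\epsilon},$$
with the analogous two-sided sandwich for $0<\lambda<1$; letting $\epsilon\to 0$ yields $U\in\mathcal{RV}_{\sigma+p+1}(+\infty)$. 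The hypothesis itself then rewrites as $G(t)=\bigl((\sigma+p+1)+o(1)\bigr)\,t^{-\sigma-1}U(t)$, whence
$$\frac{G(\lambda t)}{G(t)}=\lambda^{-\sigma-1}\cdot\frac{U(\lambda t)}{U(t)}\cdot\frac{(\sigma+p+1)+o(1)}{(\sigma+p+1)+o(1)}\longrightarrow\lambda^{-\sigma-1}\lambda^{\sigma+p+1}=\lambda^p,$$
which is exactly $G\in\mathcal{RV}_p(+\infty)$.

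The second case ($\sigma<-(p+1)$) proceeds along the same lines with $U(t):=\int_t^\infty s^\sigma G(s)\,ds$; convergence of this improper integral for large $t$ is forced by the hypothesis itself, which requires the denominator to be finite. Here $U'(t)=-t^\sigma G(t)$ and the hypothesis reads $tU'(t)/U(t)\to\sigma+p+1<0$, so the same logarithmic integration yields $U\in\mathcal{RV}_{\sigma+p+1}(+\infty)$ and the identical algebraic transfer gives $G\in\mathcal{RV}_p(+\infty)$. The main technical point, and in my view the only real pitfall, is to ensure that $(\sigma+p+1)+o(1)$ stays bounded away from $0$ for large $t$ so that the division in the transfer step is legitimate; this is automatic once $t$ is large enough since the limit of that quantity is $\sigma+p+1\neq 0$ in both cases. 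Beyond this bookkeeping the argument is a classical Karamata-type integration of a logarithmic derivative and presents no deeper obstacle.
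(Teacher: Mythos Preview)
Your argument is correct and is the standard proof of Karamata's converse theorem: smooth $G$ by passing to the integral $U$, read the hypothesis as $tU'(t)/U(t)\to\sigma+p+1$, integrate this logarithmic derivative to get $U\in\mathcal{RV}_{\sigma+p+1}(+\infty)$, and then transfer back to $G$ via $G(t)=\bigl((\sigma+p+1)+o(1)\bigr)t^{-\sigma-1}U(t)$. The bookkeeping you flag (nonvanishing of $\sigma+p+1$, absolute continuity of $U$ so that the logarithmic derivative can be integrated, and finiteness of the tail integral in the second case being implicit in the hypothesis) is exactly right.

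Note, however, that the paper does not supply its own proof of this statement: Theorem~\ref{KaramataConverse} is quoted as a classical result of Karamata, with reference to \cite{Krasn}, and is used as background in Section~\ref{Section:Functional}. So there is no ``paper's proof'' to compare against; your proposal simply fills in the omitted classical argument, and does so correctly.
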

Because of relation \eqref{origintoinfty}, Theorems \ref{unifth}, \ref{KaramataDirect}, and \ref{KaramataConverse} together with the fact $G(0)=0$, we have (cf. \cite[Theorem 1.7.2b]{Krasn}),
\begin{equation}\label{unifconvergence}
\lim\limits_{t\to+\infty}\frac{G(\lambda t)}{G(t)}=\lambda^p,\qquad\text{uniformly in $\lambda$ on each compact set of } [0,+\infty),
\end{equation}
and
\begin{equation}\label{premot}
\lim\limits_{\delta\to0^+}\frac{1}{\delta^{\sigma+1}\ell(\delta)}\int_0^\delta t^\sigma\ell(t)\,dt=\frac{1}{\sigma+1}.
\end{equation}
Thus, given $G\in\mathcal{RV}_p(0)$,
\begin{equation}\label{motivation}
\lim\limits_{\delta\to0^+} \frac{1}{G(\delta)}\int_0^\delta\frac{G(t)}{t}dt=\frac{1}{p}.
\end{equation}
As commented before, the prototype of Orlicz function fulfilling the required hypotheses is the function $G(t)=t^p(1+|\ln(t)|)$ with $p>1$. We have  $G(t)\in\mathcal{RV}_p(0)$, and it is obviously not comparable to any power $t^q$, $q>1$. By direct computation, $p^{\pm}=p\pm1$, and, moreover, $G'(t)$ belongs to $\mathcal{RV}_{p-1}(0)$. Thus, in what follows we will assume that the function $G(t)=t^p\ell(t)$ satisfies the following hypotheses
\begin{equation}\label{hypotheses}\tag{$\mathcal{H}$}
\left\{
\begin{tabular}{l}
$G\in \mathcal{RV}_{p}(0)$ is a piecewise smooth Orlicz function and $G'\in\mathcal{RV}_{p-1}(0)$,\\
$\ell(t)$ is bounded from below by a positive constant, say $\ell(t)\geq c_\ell>0$ for all $t>0$.
\end{tabular}
\right.
\end{equation}
\subsubsection{Fractional Orlicz-Sobolev spaces}\hfill

Given an Orlicz function $G$ the Orlicz space $L^G(\mathbb{R}^N)$ is defined as
\begin{equation*}
L^G(\mathbb{R}^N)\vcentcolon=\{u:\mathbb{R}^N\mapsto\mathbb{R},\ \text{measurable, such that } \Phi_G(u)<\infty\},
\end{equation*}
where
\begin{equation*}
\Phi_G(u)=\int_{\mathbb{R}^N} G(|u(x)|)dx.
\end{equation*}
Next, given $0<s\leq1$, the Orlicz-Sobolev space $W^{1,G}(\mathbb{R}^N)$ is defined as
\begin{equation*}
W^{s,G,\delta}(\mathbb{R}^N)\vcentcolon=\{u\in L^G(\mathbb{R}^N), \text{ such that } \Psi_{s,G,\delta}(u)<\infty\},
\end{equation*}
with
\begin{equation*}
\Psi_{s,G,\delta}(u)=\int_{\mathbb{R}^N}\int_{B(x,\delta)} G\left(\frac{|u(x)-u(y)|}{|x-y|^{s}}\right)\frac{dydx}{|x-y|^N},
\end{equation*}
for $0<s<1$, and 
\begin{equation*}
\Psi_{1,G,\delta}(u)=\Phi_{G}(|\nabla u|).
\end{equation*}
These spaces are endowed with the so-called Luxemburg norm,
\begin{equation*}
\|u\|_G=\|u\|_{L^G(\mathbb{R}^N)}\vcentcolon=\inf\limits_{\lambda>0}\left\{\Phi_G\left(\frac{u}{\lambda}\right)\leq1\right\}\qquad\text{and}\qquad\|u\|_{s,G,\delta}=\|u\|_{W^{s,G,\delta}(\mathbb{R}^N)}\vcentcolon=\|u\|_G+[u]_{s,G,\delta},
\end{equation*}
where
\begin{equation*}
[u]_{s,G,\delta}\vcentcolon=\inf\limits_{\lambda>0}\left\{\Phi_{s,G,\delta}\left(\frac{u}{\lambda}\right)\leq1\right\}.
\end{equation*}
Based on \cite{FernandezBonder2019}, given $\delta>0$ and $0<s<1$, the term $[u]_{s,G,\delta}$ is referred to as the $(s,G,\delta)$-Gagliardo seminorm. Note that, because of \eqref{hypotheses},
\begin{equation}\label{inclusion}
L^G(\mathbb{R}^N)\subset L^p(\mathbb{R}^N).
\end{equation}

 Next we recall some important properties of the spaces $L^G(\mathbb{R}^N)$ and $W^{1,G}(\mathbb{R}^N)$.  
\begin{theorem}
Let $G$ be an Orlicz function according to Definition \ref{def:orlicz}. Then the spaces $L^G(\mathbb{R}^N)$ and $W^{1,G}(\mathbb{R}^N)$ are reflexive, separable Banach spaces. Moreover, the dual space of $L^G(\mathbb{R}^N)$ can be identified with $L^{G^*}(\mathbb{R}^N)$. Finally, $C_c^{\infty}(\mathbb{R}^N)$ is dense in both $L^G(\mathbb{R}^N)$ and $W^{s,G}(\mathbb{R}^N)$.
\end{theorem}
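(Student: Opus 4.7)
The proof is a synthesis of standard Orlicz and fractional Sobolev arguments, leveraging the $\Delta_2$ and $\nabla_2$ conditions (both guaranteed by \eqref{pbounds}) to get everything to work smoothly. The plan is to proceed property by property, handling $L^G$ first and then bootstrapping to $W^{s,G,\delta}$.

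For the Banach space property of $L^G(\mathbb{R}^N)$ I would invoke the Luxemburg norm structure directly: given a Cauchy sequence $(u_n)$, extract an a.e.\ pointwise convergent subsequence with limit $u$, then pass to the liminf in the modular inequality $\Phi_G(|u_n-u_m|/\lambda)\leq 1$ using Fatou, which yields $\|u_n-u\|_G \to 0$. Separability follows from the density, in Luxemburg norm, of simple functions supported on dyadic cubes with rational values; here the $\Delta_2$ condition ($H_2$) is what allows modular convergence of such approximations to upgrade to norm convergence. For the dual identification $(L^G)^*=L^{G^*}$, I would cite/reprove the classical result: every $\varphi \in L^{G^*}$ induces a continuous functional on $L^G$ via $u\mapsto\int u\varphi\,dx$ (Young's inequality bounds it), and conversely the $\Delta_2$ hypothesis rules out singular components in the dual. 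Reflexivity then follows because \eqref{nabla_condition}—which is precisely the $\nabla_2$ condition—together with $\Delta_2$ implies that both $G$ and $G^*$ belong to the $\Delta_2$ class; hence both $L^G$ and $L^{G^*}$ are Banach with each other as dual, giving $L^G = (L^G)^{**}$.

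For $W^{s,G,\delta}$, the natural device is to view the map
\begin{equation*}
T\colon u\mapsto\Bigl(u,\ (x,y)\mapsto \frac{u(x)-u(y)}{|x-y|^{s}}\mathbf{1}_{\{|x-y|<\delta\}}\Bigr)
\end{equation*}
into the product $L^G(\mathbb{R}^N)\times L^G\!\bigl(\mathbb{R}^N\times\mathbb{R}^N,|x-y|^{-N}dxdy\bigr)$, which is an isometric embedding onto a closed subspace. Completeness, separability, and reflexivity of $W^{s,G,\delta}(\mathbb{R}^N)$ thus descend from the corresponding properties of this product space, which in turn reduce to the $L^G$ case over a (generalized, product) measure space.

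The main obstacle is the density of $C_c^\infty(\mathbb{R}^N)$ in $W^{s,G,\delta}(\mathbb{R}^N)$. The argument I would follow is the classical three-step scheme: truncate to bounded functions, then cut off to compact support with smooth cut-offs $\eta_R$, then mollify. For the first two steps, the $\Delta_2$ condition ensures that the modular $\Psi_{s,G,\delta}$ is continuous under monotone/dominated convergence of the difference quotients, so the truncation and cut-off errors vanish. The mollification step is the delicate one: writing $u_\varepsilon = u\ast\rho_\varepsilon$, one needs to show $\Psi_{s,G,\delta}(u_\varepsilon - u)\to 0$. I would use Jensen's inequality on the convex $G$ to bound the modular of $u_\varepsilon-u$ by an average of translates of $\Psi_{s,G,\delta}(u(\cdot+h)-u)$, and then conclude via the continuity of translations in $L^G$ (itself a consequence of $\Delta_2$ and density of continuous functions), together with dominated convergence applied to the Gagliardo integrand. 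Since $\Delta_2$ guarantees that modular convergence and norm convergence coincide, the density in Luxemburg norm follows; density in $L^G$ is the same mollification argument without the nonlocal term.
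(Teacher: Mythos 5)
Your proposal is correct and follows exactly the route the paper itself indicates: the paper gives no proof beyond citing \cite{Krasn} and \cite[Proposition~2.11]{FernandezBonder2019} and noting that $\Delta_2$ yields separability, $\nabla_2$ yields reflexivity, and density of $C_c^\infty$ follows from truncation plus mollification via Jensen's inequality. Your write-up is simply a fleshed-out version of that standard scheme (Fatou for completeness, the closed-subspace embedding into a product of Orlicz spaces, and continuity of translations for the mollification step), so there is nothing to flag.
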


Many of these properties of Orlicz-Sobolev spaces are obtained by very straightforward generalization of the proofs of the same properties for ordinary Sobolev spaces. In particular, the $\Delta_2$-condition ensures that the space $W^{1,G}(\mathbb{R}^N)$ is separable, while the $\nabla_2$-condition, \eqref{nabla_condition}, ensures that the space $W^{1,G}(\mathbb{R}^N)$ is reflexive. 
As indicated in \cite[Proposition 2.11]{FernandezBonder2019}, it is also straightforward to extend these functional properties to the space $W^{s,G,\infty}(\mathbb{R}^N)$. By the very definition of the spaces $W^{s,G,\infty}(\mathbb{R}^N)$, the same applies here. In particular, the density of $C_c^\infty(\mathbb{R}^N)$ follows by an standard argument of truncation and mollifier regularization jointly with convexity or Jensen's inequality, (cf. \cite[Proposition 2.11]{FernandezBonder2019}).
\begin{theorem}
Let $G$ be an Orlicz function according Definition \ref{def:orlicz} and $\delta>0$ and $0<s<1$. Then $W^{s,G,\delta}(\mathbb{R}^N)$ is a reflexive and separable Banach space. Moreover, the space $C_c^{\infty}(\mathbb{R}^N)$ is dense in $W^{s,G,\delta}(\mathbb{R}^N)$.
\end{theorem}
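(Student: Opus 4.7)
\medskip

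The plan is to reduce everything to known facts about Orlicz spaces by realising $W^{s,G,\delta}(\mathbb{R}^N)$ as a closed subspace of a product of two Orlicz spaces, and then to obtain density by the usual truncation--mollification procedure. Let me set $D_\delta=\{(x,y)\in\mathbb{R}^{2N}:|x-y|<\delta\}$ equipped with the measure $d\mu_\delta(x,y)=|x-y|^{-N}\,dy\,dx$, and define
\begin{equation*}
T:W^{s,G,\delta}(\mathbb{R}^N)\longrightarrow L^G(\mathbb{R}^N)\times L^G(D_\delta,d\mu_\delta),\qquad T(u)=\bigl(u,\,(D_su)\chi_{D_\delta}\bigr),
\end{equation*}
where $(D_su)(x,y)=(u(x)-u(y))|x-y|^{-s}$. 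By the very definitions of $\Phi_G$, $\Psi_{s,G,\delta}$ and the associated Luxemburg norms, $T$ is linear and norm-preserving (with respect to the natural norm on the product), so it is an isometric embedding.

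The first substantive step is to check that $T(W^{s,G,\delta}(\mathbb{R}^N))$ is closed in the product space. If $(u_n,v_n)=T(u_n)$ is Cauchy, then $u_n\to u$ in $L^G(\mathbb{R}^N)$ and $v_n\to v$ in $L^G(D_\delta,d\mu_\delta)$. Passing to a subsequence one obtains a.e.\ pointwise convergence of $u_n$ and, by a further Fubini-plus-diagonal extraction, a.e.\ convergence of $v_n(x,y)$ on $D_\delta$; identifying the pointwise limit gives $v=D_su$ a.e.\ on $D_\delta$, so $(u,v)=T(u)\in T(W^{s,G,\delta})$. This also yields completeness of $W^{s,G,\delta}(\mathbb{R}^N)$, hence the Banach-space assertion. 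Given reflexivity and separability of $L^G$ (guaranteed by the $\Delta_2$ and $\nabla_2$ conditions recorded in \eqref{nabla_condition}, together with $L^G$ being the dual of $L^{G^*}$), the product $L^G(\mathbb{R}^N)\times L^G(D_\delta,d\mu_\delta)$ is reflexive and separable, and closed subspaces inherit both properties through $T$.

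For density of $C_c^\infty(\mathbb{R}^N)$ I would follow the scheme of \cite[Proposition 2.11]{FernandezBonder2019}, which treats the case $\delta=\infty$. Step one: truncate by a smooth cut-off $\eta_R\in C_c^\infty(\mathbb{R}^N)$ with $\eta_R\equiv1$ on $B(0,R)$; using the subadditivity and $\Delta_2$ property $P_4$, one controls $\Psi_{s,G,\delta}(u-u\eta_R)$ by the tail $\int_{|x|>R-\delta}G(|u|)\,dx$ plus a truncated Gagliardo contribution over $\{|x|>R-\delta\}$, both of which vanish as $R\to\infty$ by absolute continuity of the integral. Step two: mollify $u\eta_R$ by $\rho_\varepsilon$; convexity of $G$ and Jensen's inequality give the standard modular bound
\begin{equation*}
\Psi_{s,G,\delta}(u\eta_R*\rho_\varepsilon)\le\int_{\mathbb{R}^N}\rho_\varepsilon(z)\,\Psi_{s,G,\delta}\bigl(\tau_z(u\eta_R)\bigr)\,dz,
\end{equation*}
and translation-continuity in the modular (which upgrades to norm convergence thanks to $\Delta_2$) produces $u\eta_R*\rho_\varepsilon\to u\eta_R$ in $W^{s,G,\delta}(\mathbb{R}^N)$. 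Note that truncating the integration radius to $\delta$ only shrinks the domain of integration compared with the $\delta=\infty$ case, so every estimate from the cited argument remains valid verbatim.

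The only delicate point, and the one I would spend the most care on, is the closedness of the image of $T$: one has to promote $L^G$-convergence of the second coordinate to pointwise a.e.\ convergence on $D_\delta$ and then identify the limit as the difference quotient of the limit of the first coordinate. Once this identification is secured, reflexivity, separability and completeness are formal, and the density step is a routine adaptation of \cite[Proposition 2.11]{FernandezBonder2019} to the geometry restricted to $|x-y|<\delta$.
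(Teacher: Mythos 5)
Your proposal is correct and follows essentially the same route the paper takes: the paper itself only sketches this theorem, deferring to the standard argument of \cite[Proposition 2.11]{FernandezBonder2019} ($\Delta_2$ for separability, $\nabla_2$ for reflexivity, truncation plus mollification with Jensen's inequality for density), and your write-up is a faithful, more detailed rendering of exactly that scheme via the isometric embedding into $L^G(\mathbb{R}^N)\times L^G(D_\delta,d\mu_\delta)$. The points you flag as delicate (closedness of the image, i.e.\ identifying the $L^G$-limit of $D_su_n$ with $D_su$ via a.e.\ subsequential convergence, and $\sigma$-finiteness of $\mu_\delta$ away from the diagonal) are handled correctly.
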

Following \cite[Theorem 3.1]{FernandezBonder2019} it can be also proved a Rellich--Kondrakov-type theorem for the spaces $W^{s,G,\delta}(\mathbb{R}^N)$, that is, the compactness of the embedding $W^{s,G,\delta}(\mathbb{R}^N)\hookrightarrow L^G(\mathbb{R}^N)$. 
\begin{theorem}\label{teo:compact}
Let $\delta>0$, $0<s<1$ and $G$ an Orlicz function. Then, for every $\{u_k\}_{k\in\mathbb{N}}\subset W^{s,G,\delta}(\mathbb{R}^N)$ bounded sequence, i.e., $\sup_k\|u_k\|_{s,G,\delta}<\infty$, there exists $u\in W^{s,G,\delta}(\mathbb{R}^N)$ such that, up to a subsequence, $u_k\to$
 in $L_{loc}^G(\mathbb{R}^N)$.
 \end{theorem}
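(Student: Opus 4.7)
The plan is to establish the compact embedding via an $L^G$ Fr\'echet--Kolmogorov argument implemented through mollification. Let $\{u_k\}\subset W^{s,G,\delta}(\mathbb{R}^N)$ with $\sup_k\|u_k\|_{s,G,\delta}\leq M$, so that both $\sup_k\Phi_G(u_k)$ and $\sup_k\Psi_{s,G,\delta}(u_k)$ are bounded. I would fix a standard mollifier $\rho_\varepsilon(x)=\varepsilon^{-N}\rho(x/\varepsilon)$ with $\rho\in C_c^{\infty}(B(0,1))$, $\rho\geq 0$, $\int\rho=1$, set $u_{k,\varepsilon}=u_k\ast\rho_\varepsilon$, and restrict to $0<\varepsilon<\min\{1,\delta\}$.

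The main step is the uniform modular estimate
\begin{equation*}
\Phi_G(u_{k,\varepsilon}-u_k)\leq C\,\varepsilon^{s}\,\Psi_{s,G,\delta}(u_k),
\end{equation*}
with $C$ depending only on $\rho$. To prove it, I would write $u_{k,\varepsilon}(x)-u_k(x)=\int\rho_\varepsilon(h)(u_k(x-h)-u_k(x))\,dh$, apply Jensen's inequality (with probability measure $\rho_\varepsilon(h)\,dh$) to pass $G$ inside the integral, and then invoke property $P_5$ with $b=|h|^s<1$ to get
\begin{equation*}
G(|u_{k,\varepsilon}(x)-u_k(x)|)\leq\int \rho_\varepsilon(h)\,|h|^s\,G\!\left(\frac{|u_k(x-h)-u_k(x)|}{|h|^s}\right)dh.
\end{equation*}
Integrating in $x$, using Fubini, and changing variables $y=x-h$ convert the inner integral into $F_{u_k}(h):=\int_{\mathbb{R}^N}G(|u_k(y+h)-u_k(y)|/|h|^s)\,dy$. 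Multiplying and dividing by $|h|^N$, bounding $\rho_\varepsilon(h)|h|^{s+N}\leq \|\rho\|_\infty\,\varepsilon^s$ on $B(0,\varepsilon)$, and recognizing the polar-coordinate identity $\Psi_{s,G,\delta}(u)=\int_{B(0,\delta)}F_u(h)|h|^{-N}\,dh$ then delivers the claimed bound. The $\Delta_2$-condition upgrades the modular smallness to norm smallness, so $\|u_{k,\varepsilon}-u_k\|_{L^G(\mathbb{R}^N)}\to 0$ as $\varepsilon\to 0^+$, uniformly in $k$.

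For each fixed $\varepsilon>0$, H\"older's inequality in Orlicz duality yields $\|u_{k,\varepsilon}\|_{L^\infty}+\|\nabla u_{k,\varepsilon}\|_{L^\infty}\leq C_\varepsilon\,\|u_k\|_{L^G}$, so the family $\{u_{k,\varepsilon}\}_k$ is uniformly bounded and equicontinuous; Arzel\`a--Ascoli then produces a subsequence converging uniformly on every compact set, hence in $L^G(K)$ for every compact $K\subset\mathbb{R}^N$. Combined with the uniform mollification error, a standard diagonal extraction yields $u_{k_j}\to u$ in $L^G_{loc}(\mathbb{R}^N)$. Finally, since $W^{s,G,\delta}(\mathbb{R}^N)$ is reflexive, a further subsequence converges weakly to some $\tilde u\in W^{s,G,\delta}(\mathbb{R}^N)$; the strong $L^G_{loc}$-convergence forces $\tilde u=u$, placing the limit in $W^{s,G,\delta}(\mathbb{R}^N)$.

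The main obstacle is the uniform-in-$k$ mollification estimate: one has to calibrate the mollifier scale simultaneously against the two competing factors $|h|^s$ and $|h|^{-N}$ appearing in the Gagliardo-type energy $\Psi_{s,G,\delta}$. The restriction $\varepsilon<\delta$ is essential, as it is precisely what allows the support of $\rho_\varepsilon$ to be absorbed by the truncated integration domain $B(x,\delta)$ of $\Psi_{s,G,\delta}$; without this, the seminorm could not control the mollification defect. Once this estimate is in place, everything else is a routine combination of Arzel\`a--Ascoli, diagonal extraction, and reflexivity.
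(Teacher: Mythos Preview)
Your argument is correct and is, at the conceptual level, the same Fr\'echet--Kolmogorov strategy the paper uses; the differences are in execution. The paper first proves a \emph{translation} equicontinuity estimate (Lemma~\ref{lem:equicont}),
\[
\Phi_G\big(u(\cdot+h)-u\big)\leq C\,|h|^s\,\Psi_{s,G,\delta}(u)\qquad\text{for }|h|<\min\{\tfrac12,\delta\},
\]
and then invokes a black-box Orlicz Fr\'echet--Kolmogorov theorem, whereas you prove the \emph{mollification} estimate $\Phi_G(u_\varepsilon-u)\leq C\varepsilon^s\Psi_{s,G,\delta}(u)$ directly and carry out the compactness by Arzel\`a--Ascoli and diagonal extraction yourself. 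Your estimate is essentially the averaged version of the paper's (integrating the translation bound against $\rho_\varepsilon$ recovers yours), so the two key lemmas are equivalent in strength; your route is more self-contained, while the paper's is shorter by outsourcing the compactness step. For the final conclusion that $u\in W^{s,G,\delta}(\mathbb{R}^N)$, the paper uses a.e.\ convergence along a subsequence plus Fatou's lemma on $\Psi_{s,G,\delta}$, which is slightly more elementary than your appeal to reflexivity and weak compactness, but both are valid.
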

 The proof of Theorem \ref{teo:compact} lies in proving the following equicontinuity estimate in order to apply a variant of the well-known Frèchet-Kolmogorov Compactness Theorem. Alhought the proof follows as in \cite[Lemma 3.2]{FernandezBonder2019} we include it for the sake of completeness.
 \begin{lemma}\label{lem:equicont}
 Let $\delta>0$, $0<s<1$ and $G$ be an Orlicz function. Then, there exists a constant $C>0$ such that 
 \begin{equation*}
 \Phi_G(u(x+h)-u(x))\leq C |h|^s\Psi_{s,G,\delta}(u),
 \end{equation*}
 for all $u\in W^{s,G,\delta}(\mathbb{R}^N)$ and every $0<|h|<\min\{\frac{1}{2},\delta\}$.
 \end{lemma}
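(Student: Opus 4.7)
The plan is the classical averaging trick adapted to Orlicz functionals. Fix $x\in\mathbb{R}^N$ and $0<|h|<\min\{1/2,\delta\}$, and set
\begin{equation*}
V=V(x,h)=B(x,|h|)\cap B(x+h,|h|).
\end{equation*}
Since the two balls have radius $|h|$ and their centers are at distance $|h|$, elementary geometry gives $|V|=\kappa_N|h|^N$ for a dimensional constant $\kappa_N>0$. The identity $u(x+h)-u(x)=(u(x+h)-u(z))+(u(z)-u(x))$ holds for every $z\in V$, so averaging over $V$ and invoking Jensen's inequality for the convex function $G$ yields
\begin{equation*}
G\bigl(|u(x+h)-u(x)|\bigr)\leq \frac{1}{|V|}\int_V G\bigl(|u(x+h)-u(z)|+|u(z)-u(x)|\bigr)\,dz.
\end{equation*}
Applying the sub-additivity property $P_4$ splits this into two symmetric pieces controlled by $G(|u(x+h)-u(z)|)$ and $G(|u(z)-u(x)|)$ up to the constant $\mathfrak{c}/2$.

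The second step extracts the factor $|h|^s$. For $z\in V$ one has $|z-x|\leq|h|<1$, so property $P_5$ gives
\begin{equation*}
G\bigl(|u(z)-u(x)|\bigr)=G\!\left(\frac{|u(z)-u(x)|}{|z-x|^s}\,|z-x|^s\right)\leq |h|^s\, G\!\left(\frac{|u(z)-u(x)|}{|z-x|^s}\right),
\end{equation*}
and symmetrically with $x+h$ in place of $x$. The Gagliardo kernel $|z-x|^{-N}$ is then inserted using the pointwise bound $1\leq |h|^N/|z-x|^N$ valid on $V$; combined with $|V|=\kappa_N|h|^N$ the prefactors $|h|^N/|V|$ cancel against $\kappa_N^{-1}$, so
\begin{equation*}
\frac{1}{|V|}\int_V G\bigl(|u(z)-u(x)|\bigr)\,dz\leq \frac{|h|^s}{\kappa_N}\int_V G\!\left(\frac{|u(z)-u(x)|}{|z-x|^s}\right)\frac{dz}{|z-x|^N}.
\end{equation*}
Because $V\subset B(x,|h|)\subset B(x,\delta)$, the last integral is bounded by the full inner integral defining $\Psi_{s,G,\delta}(u)$; the same is true for the companion piece after noting $V\subset B(x+h,\delta)$.

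Finally, integrating the resulting pointwise estimate over $x\in\mathbb{R}^N$ and using translation invariance of the Lebesgue measure (change of variables $x\mapsto x+h$ in the second piece) produces $\Psi_{s,G,\delta}(u)$ on both sides, giving
\begin{equation*}
\Phi_G\bigl(u(\cdot+h)-u(\cdot)\bigr)\leq \frac{\mathfrak{c}}{\kappa_N}\,|h|^s\,\Psi_{s,G,\delta}(u),
\end{equation*}
which is the claimed estimate with $C=\mathfrak{c}/\kappa_N$. The only real obstacle is matching the purely local structure of $\Phi_G$ with the Gagliardo kernel $|z-x|^{-N}$ of $\Psi_{s,G,\delta}$; this is resolved by combining $P_5$, which yields the scaling factor $|h|^s$, with the volume identity $|V|\sim|h|^N$, which is precisely what is needed to reinstate $|z-x|^{-N}$ without losing the uniform constant. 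The sub-additivity $P_4$ (equivalently the $\Delta_2$ condition) is essential to separate the two cross-terms; super-linearity or growth bounds on $G$ are not used.
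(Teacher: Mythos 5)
Your argument is correct and follows essentially the same route as the paper's proof: split $u(x+h)-u(x)$ through an intermediate point $z$, average over a region of volume $\sim|h|^N$, use the $\Delta_2$/subadditivity property to separate the two differences, and apply $P_5$ together with the bound $|z-x|^{-N}\geq|h|^{-N}$ to reinstate the Gagliardo kernel and extract the factor $|h|^s$. The only (harmless) variation is that you average over the lens $B(x,|h|)\cap B(x+h,|h|)$ rather than over $B(x,|h|)$, which symmetrizes the two pieces and avoids the factors $2^{N+s}$ appearing in the paper.
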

 \begin{proof}
Because of $H_2$),
 \begin{equation*}
G(|u(x+h)-u(x)|)\leq \mathfrak{c}\Big( G(|u(x+h)-u(y)|) + G(|u(x)-u(y)|)\Big)
 \end{equation*}
 for all $y\in B(x,|h|)$. Then,
 \begin{equation*}
 \begin{split}
  \Phi_G(u(x+h)-u(x))&=\int_{\mathbb{R}^N} G(|u(x+h)-u(x)|) dx\\
  &=\frac{1}{|B(x,|h|)|}\int_{B(x,|h|)}\int_{\mathbb{R}^N} G(|u(x+h)-u(x)|) dx dy\\
  &\leq \frac{\mathfrak{c}}{\omega_N |h|^N}\left(\int_{B(x,|h|)}\int_{\mathbb{R}^N} G(|u(x+h)-u(y)|) dx dy\right.\\
  &\mkern+100mu \left.+\int_{B(x,|h|)}\int_{\mathbb{R}^N}G(|u(x)-u(y)|)dxdy\right)\\
 &=\frac{\mathfrak{c}}{\omega_N |h|^N}(I_1+I_2).
 \end{split}
 \end{equation*}
 Next, observe that, given $x\in\mathbb{R}^N$ and $y\in B(x,|h|)$, we have $ |x-y|\leq|h|$ and $|x+h-y|\leq|x-y|+|h|\leq2|h|$. Then, because of $P_5$) and the monotonicity of $G$, we get
 \begin{equation*}
 \begin{split}
 I_1&=\int_{B(x,|h|)}\int_{\mathbb{R}^N} G\left(\frac{|u(x+h)-u(y)|}{|x+h-y|^s}|x+h-y|^s\right) |x+h-y|^N\frac{dx}{|x+h-y|^N}dy\\
 &\leq \int_{B(x,|h|)}\int_{\mathbb{R}^N} G\left(\frac{|u(x+h)-u(y)|}{|x+h-y|^s}2^s|h|^s\right) 2^N|h|^N\frac{dx}{|x+h-y|^N}dy\\
 &\leq \int_{B(x,|h|)}\int_{\mathbb{R}^N} G\left(\frac{|u(x+h)-u(y)|}{|x+h-y|^s}\right) 2^{N+s}|h|^{N+s}\frac{dx}{|x+h-y|^N}dy\\
 &=2^{N+s}|h|^{N+s}\int_{B(x,|h|)}\int_{\mathbb{R}^N} G\left(\frac{|u(x)-u(y)|}{|x-y|^s}\right)\frac{dx}{|x-y|^N}dy\\
 &\leq 2^{N+s}|h|^{N+s}\int_{B(x,\delta)}\int_{\mathbb{R}^N} G\left(\frac{|u(x)-u(y)|}{|x-y|^s}\right)\frac{dx}{|x-y|^N}dy\\
 &= 2^{N+s}|h|^{N+s}\int_{B(0,\delta)}\int_{\mathbb{R}^N} G\left(\frac{|u(x+z)-u(x)|}{|z|^s}\right)\frac{dx}{|z|^N}dz\\
 &= 2^{N+s}|h|^{N+s}\int_{\mathbb{R}^N} \int_{B(0,\delta)} G\left(\frac{|u(x+z)-u(x)|}{|z|^s}\right)\frac{dzdx}{|z|^N}\\
 &=2^{N+s}|h|^{N+s}\Psi_{s,G,\delta}(u).
 \end{split}
 \end{equation*}
 Similarly, we can prove $ I_2\leq |h|^{N+s}\Psi_{s,G,\delta}(u)$ and the result follows. 
\end{proof}
\begin{proof}[Proof of Theorem \ref{teo:compact}]
Since $\{u_k\}_{k\in\mathbb{N}}$ is bounded in $W^{s,G,\delta}(\mathbb{R}^N)$ it is also bounded in $L^G(\mathbb{R}^N)$. Let us set 
$$M\vcentcolon=\sup_k\Big(\Psi_{s,G,\delta}(u_k)+\Phi_G(u_k)\Big).$$
Thus, by Lemma \ref{lem:equicont},
$$\sup_k\Phi_G\Big(u_k(x+h)-u(x)\Big)\leq C\, M\ |h|^s.$$
 By \cite[Theorem 11.5]{Krasn}, there exists $u\in L^G(\mathbb{R}^N)$ such that, up to a subsequence, $u_k\to u$ in $L_{loc}^G(\mathbb{R}^N)$. Moreover, $u\in W^{s,G,\delta}(\mathbb{R}^N)$ and, up to a subsequence, $u_k\to u$ a.e. in $\mathbb{R}^N$. Then,
 \begin{equation*}
 0\leq \lim\limits_{k\to+\infty}G\left(\frac{|u_k(x)-u_k(y)|}{|x-y|^s}\right)=G\left(\frac{|u(x)-u(y)|}{|x-y|^s}\right)\qquad\text{a.e. in }\mathbb{R}^N\times B(x,\delta).
 \end{equation*}
By Fatou's Lemma and the lower semicontinuity of $G$, we get
\begin{equation*}
\begin{split}
\Psi_{s,G,\delta}(u)
&\leq\liminf\limits_{k\to+\infty}\int_{\mathbb{R}^N}\int_{B(x,\delta)} G\left(\frac{|u_k(x)-u_k(y)|}{|x-y|^{s}}\right)\frac{dydx}{|x-y|^N}\leq\sup\limits_{k\in\mathbb{N}}\Psi_{s,G,\delta}(u_k)\leq M<+\infty.
\end{split}
\end{equation*}
As a consequence, $u\in W^{s,G,\delta}(\mathbb{R}^N)$.
 \end{proof}

Once we have introduced the main definitions we finish this section by exposing a first approach that motivates Theorem \ref{mainTheorem}. Actually, the identity \eqref{motivation} combined with Theorem \ref{ThBBM}
produces the following. Let us take a smooth function $u$ and consider
\begin{equation*}
\begin{split}
\Psi_{s,G,\delta}(u)&=\int_{\mathbb{R}^N}\int_{B(x,\delta)} G\left(\frac{|u(x)-u(y)|}{|x-y|^{s}}\right)\frac{dydx}{|x-y|^N}\\
&=\int_{\mathbb{R}^N}\int_{ B(x,\delta)}\frac{|u(x)-u(y)|^p}{|x-y|^{sp}}\ell\left(\frac{|u(x)-u(y)|}{|x-y|^{s}}\right)\frac{dydx}{|x-y|^N}.
\end{split}
\end{equation*}
Since
\begin{equation*}
\lim\limits_{\delta\to0^+}\sup\limits_{y\in B(x,\delta)}\Big||u(x)-u(y)|-|\nabla u(x)||x-y|\Big|=0,
\end{equation*}
it follows that
\begin{equation*}
\begin{split}
\lim\limits_{\delta\to0^+}\Psi_{s,G,\delta}(u)&=\lim\limits_{\delta\to0^+}\int_{\mathbb{R}^N}\int_{ B(x,\delta)} G\left(\frac{|u(x)-u(y)|}{|x-y|^{s}}\right)\frac{dydx}{|x-y|^N}\\
&=\lim\limits_{\delta\to0^+}\int_{\mathbb{R}^N}\int_{ B(x,\delta)}\frac{|u(x)-u(y)|^p}{|x-y|^{sp}}\ell\left(|\nabla u(x)||x-y|^{1-s}\right)\frac{dydx}{|x-y|^N}.
\end{split}
\end{equation*}
Following \cite{Bourgain2001}, let us define
\begin{equation*}
\rho_{\delta}(z,a)\vcentcolon=\frac{\ell(a|z|)}{|x|^{N-p(1-s)}}\chi_{B(0,\delta)}.
\end{equation*}
By direct computation,
\begin{equation*}
\int \rho_\delta(z,a)dz=\frac{1}{a^p(1-s)}\int_{0}^{a\delta^{1-s}}\frac{t^p\ell(t)}{t}dt,
\end{equation*}
so that, by \eqref{premot}, 
\begin{equation*}
\begin{split}
\lim\limits_{\delta\to0^+}\frac{1}{G(\delta^{1-s})}\int \rho_\delta(z,a)dz&=\frac{1}{a^p(1-s)}\lim\limits_{\delta\to0^+}\frac{1}{G(\delta^{1-s})}\int_{0}^{a\delta^{1-s}}\frac{t^p\ell(t)}{t}dt\\
&=\frac{1}{a^p(1-s)}\lim\limits_{\delta\to0^+} \frac{1}{G(\delta^{1-s})}\frac{(a\delta^{1-s})^p\ell(a\delta^{1-s})}{p}\\
&=\frac{1}{p(1-s)}\lim\limits_{\delta\to0^+} \frac{(\delta^{1-s})^p\ell(\delta^{1-s})}{G(\delta^{1-s})}\\
&=\frac{1}{p(1-s)},
\end{split}
\end{equation*}
where we have used the fact that $\ell(t)$ is a slowly varying function. Then, setting
\begin{equation*}
\overline{\rho}_\delta(z,a)=\frac{p(1-s)}{G(\delta^{1-s})}\rho_\delta(z,a),
\end{equation*}
we get 
\begin{equation*}
\lim\limits_{\delta\to0^+}\int\overline{\rho}_{\delta}(z,a)dz=1,\qquad\text{for all }a\geq0.
\end{equation*}
Hence, by Theorem \ref{ThBBM}, we conclude
\begin{equation*}
\begin{split}
\lim\limits_{\delta\to0^+}\frac{p(1-s)}{G(\delta^{1-s})}\Psi_{s,G,\delta}(u)&=\lim\limits_{\delta\to0^+}\int_{\mathbb{R}^N}\int_{B(x,\delta)} \frac{|u(x)-u(y)|^p}{|x-y|^{p}}\overline{\rho}_\delta(|x-y|,|\nabla u(x)|)dydx\\
&=K_{n,p}\int_{\mathbb{R}^N} |\nabla u(x)|^pdx.
\end{split}
\end{equation*}
\subsection{Technical Results}\label{Section:Technical}\hfill\newline
In this section we provide some estimates for the regularized families associated to a given function $u\in L^G(\Omega)$ useful in what follows.

Let $\rho\in C_c^{\infty}(\mathbb{R}^N)$ be an standard mollifier with $supp(\rho)=B(0,1)$ and let $\rho_r(x)=r^{-N}\rho\left(\frac{x}{r}\right)$ the approximation to the identity, so that the family $\{\rho_r\}_{r>0}$ is a family of positive functions such that 
\begin{equation}\label{mollifier_properties}
\rho_r\in C_c^{\infty}(\mathbb{R}),\qquad supp(\rho_r)=B(0,r)\qquad\text{and }\int_{\mathbb{R}^N}\rho_r\,dx=1.
\end{equation}
For each $u\in L^G(\Omega)$ and $r>0$ we define the mollified function $u_r\in L^G(\Omega)\cap C^{\infty}(\mathbb{R}^N)$ as
\begin{equation}\label{mollified}
u_r=u(x)\ast \rho_r(x).
\end{equation}
Note that, if $u\in L^G(\mathbb{R}^N)$, in particular, $u\in L_{loc}^1(\mathbb{R}^N)$, so that 
\begin{equation}\label{conv_ae}
u_r\to u\quad\text{in }L_{loc}^1(\mathbb{R}^N),\quad\text{and a.e. in }\mathbb{R}^N.
\end{equation}
If, moreover, $u\in W^{1,G}(\mathbb{R})$, in particular $u\in W^{1,1}_{loc}(\mathbb{R}^N)$, and then
\begin{equation}\label{conv_ae2}
\nabla u_k\to \nabla u\qquad\text{a.e. in }\mathbb{R}^N.
\end{equation}
The following useful estimate for the mollified function holds.
\begin{lemma}\label{lem:mollifier}
Let $u\in L^G(\Omega)$ and $\{u_r\}_{r>0}$ be the family defined as in \eqref{mollified}. Then
\begin{equation*}
\Psi_{s,G,\delta}(u_r)\leq\Psi_{s,G,\delta}(u).
\end{equation*}
\end{lemma}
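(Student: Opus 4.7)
The plan is to exploit two facts: $G$ is convex (from $H_1$), and $\rho_r\,dz$ is a probability measure (from \eqref{mollifier_properties}). These are precisely the ingredients needed for a Jensen-style domination of the mollified difference quotient by the original one.

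First I would write
\begin{equation*}
u_r(x)-u_r(y)=\int_{\mathbb{R}^N}\rho_r(z)\,\bigl(u(x-z)-u(y-z)\bigr)\,dz,
\end{equation*}
so that, dividing by $|x-y|^s$ and applying Jensen's inequality to the convex function $G$ against the probability measure $\rho_r(z)\,dz$, I obtain the pointwise bound
\begin{equation*}
G\!\left(\frac{|u_r(x)-u_r(y)|}{|x-y|^s}\right)\leq \int_{\mathbb{R}^N}\rho_r(z)\,G\!\left(\frac{|u(x-z)-u(y-z)|}{|x-y|^s}\right)dz.
\end{equation*}

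Next I would plug this into the definition of $\Psi_{s,G,\delta}(u_r)$ and apply Fubini, which is legitimate since the integrand is non-negative. The key observation is that, for fixed $z$, the change of variables $x'=x-z$, $y'=y-z$ preserves both the measure $dx\,dy$ and the quantity $|x-y|=|x'-y'|$, and moreover the condition $y\in B(x,\delta)$ becomes $y'\in B(x',\delta)$. Hence
\begin{equation*}
\int_{\mathbb{R}^N}\!\int_{B(x,\delta)}G\!\left(\frac{|u(x-z)-u(y-z)|}{|x-y|^s}\right)\frac{dy\,dx}{|x-y|^N}=\Psi_{s,G,\delta}(u)
\end{equation*}
independently of $z$. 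Integrating against $\rho_r(z)\,dz$ and using $\int_{\mathbb{R}^N}\rho_r\,dz=1$ then yields the desired inequality.

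I do not anticipate any real obstacle here: the argument is the standard ``Jensen plus translation-invariance'' scheme and it goes through verbatim because the seminorm $\Psi_{s,G,\delta}$ depends on $u$ only through translation-invariant quantities ($|u(x)-u(y)|$, $|x-y|$, and the condition $y\in B(x,\delta)$). The only point requiring any care is justifying Fubini, which is immediate from non-negativity of the integrand.
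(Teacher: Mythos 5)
Your argument is correct and is essentially the paper's own proof: both rest on Jensen's inequality for the convex function $G$ against the probability measure $\rho_r(z)\,dz$, followed by Tonelli and translation invariance of the double integral (the paper merely phrases the translation step in the variables $(x,h)$ with $h=y-x$ rather than translating $(x,y)$ jointly). No gaps.
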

\begin{proof}
First, let us note that
\begin{equation*}
\begin{split}
\Psi_{s,G,\delta}(u_r)&=\int_{\mathbb{R}^N}\int_{B(x,\delta)} G\left(\frac{|u_r(x)-u_r(y)|}{|x-y|^{s}}\right)\frac{dydx}{|x-y|^N}\\
&=\int_{\mathbb{R}^N}\int_{B(0,\delta)} G\left(\frac{|u_r(x+h)-u_r(x)|}{|h|^{s}}\right)\frac{dh}{|h|^N}dx\\
&=\int_{B(0,\delta)} \int_{\mathbb{R}^N}G\left(\frac{|u_r(x+h)-u_r(x)|}{|h|^{s}}\right)dx\frac{dh}{|h|^N}.
\end{split}
\end{equation*}
Moreover, since $G$ is convex, it follows that
\begin{equation*}
\begin{split}
G\left(\frac{|u_r(x+h)-u_r(x)|}{|h|^{s}}\right)&=G\left(\left|\int_{\mathbb{R}^N} u(x+h-z)-u(x-z)\rho_r(z)|h|^{-s}dz \right|\right)\\
&\leq\int_{\mathbb{R}^N}G\Big(|u(x+h-z)-u(x-z)||h|^{-s}\Big)\rho_r(z)dz.
\end{split}
\end{equation*}
Thus, because of \eqref{mollifier_properties},

\begin{equation*}
\begin{split}
\int_{\mathbb{R}^N}G\left(\frac{|u_r(x+h)-u_r(x)|}{|h|^{s}}\right)dx&\leq \int_{\mathbb{R}^N} \int_{\mathbb{R}^N}G\Big(|u(x+h-z)-u(x-z)||h|^{-s}\Big)\rho_r(z)dzdx\\
&=\int_{\mathbb{R}^N} \left(\int_{\mathbb{R}^N}G\Big(|u(x+h-z)-u(x-z)||h|^{-s}\Big)dx\right)\rho_r(z)dz\\
&=\int_{\mathbb{R}^N} \left(\int_{\mathbb{R}^N}G\Big(|u(x+h)-u(x)||h|^{-s}\Big)dx\right)\rho_r(z)dz\\
&=\int_{\mathbb{R}^N}G\Big(|u(x+h)-u(x)||h|^{-s}\Big)dx.
\end{split}
\end{equation*}
Then, we conclude
\begin{equation*}
\begin{split}
\Psi_{s,G,\delta}(u_r)&=\int_{B(0,\delta)} \int_{\mathbb{R}^N}G\left(\frac{|u_r(x+h)-u_r(x)|}{|h|^{s}}\right)dx\frac{dh}{|h|^N}\\
&\leq \int_{B(0,\delta)} \int_{\mathbb{R}^N}G\Big(|u(x+h)-u(x)||h|^{-s}\Big)dx \frac{dh}{|h|^N}\\
&=\int_{\mathbb{R}^N}\int_{B(0,\delta)} G\Big(|u(x+h)-u(x)||h|^{-s}\Big)\frac{dh}{|h|^N}dx\\
&=\int_{\mathbb{R}^N}\int_{B(x,\delta)} G\left(\frac{|u(x)-u(y)|}{|x-y|^{s}}\right)\frac{dydx}{|x-y|^N}\\
&=\Psi_{s,G,\delta}(u).
\end{split}
\end{equation*}
\end{proof}
Next, we provide estimates for truncated functions. Let $\eta\in C^{\infty}(\mathbb{R}^N)$ such that $\eta\equiv 1$ in $B(0,1)$, $supp(\eta)=B(0,2)$ with $0\leq\eta\leq 1$ in $\mathbb{R}^N$ and $\|\nabla \eta\|_{\infty}\leq 2$. Given $k\in\mathbb{N}$, let us define $\eta_k(x)=\eta\left(\frac{x}{k}\right)$. Note that $\{\eta_k\}_{k\in\mathbb{N}}\in C_c^{\infty}(\mathbb{R}^N)$ and, for $k\in\mathbb{N}$, we have
\begin{equation}\label{truncation_properties}
0\leq\eta_k\leq1,\quad \eta_k=1\in B(0,k),\quad supp(\eta_k)=B(0,2k),\quad\text{and}\quad|\nabla\eta_k|\leq\frac{2}{k}.
\end{equation}
Given $u\in L^G(\Omega)$ we define the truncated functions $u_k$, $k\in\mathbb{N}$ as
\begin{equation}\label{truncated}
u_k=\eta_k u.
\end{equation}
We prove next an estimate on the truncated functions.
\begin{lemma}
Let $u\in L^G(\mathbb{R}^N)$ and $\{u_k\}_{k\in\mathbb{N}}$ defined as in \eqref{truncated} above. Then
\begin{equation*}
\Psi_{s,G,\delta}(u_k)\leq\frac{\mathfrak{c}}{2}\Psi_{s,G,\delta}(u)+\frac{N\omega_N\mathfrak{c}^2}{2k(1-s)}\Phi_G(u\,\delta^{1-s}).
\end{equation*}
\end{lemma}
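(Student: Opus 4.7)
The plan is to decompose the difference $u_k(x)-u_k(y)$ along the natural product-rule splitting
\begin{equation*}
u_k(x)-u_k(y)=\eta_k(x)\bigl(u(x)-u(y)\bigr)+u(y)\bigl(\eta_k(x)-\eta_k(y)\bigr),
\end{equation*}
apply the subadditivity property $P_4$ so that the problem decouples into two independent integrals, and then treat each piece using the growth properties of $G$ recorded in the previous section. The overall $\frac{\mathfrak{c}^2}{2}$ factor appearing in the estimate anticipates that the second piece will require an additional application of the $\Delta_2$ condition on top of $P_4$.

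For the first piece, I would dominate
\begin{equation*}
G\!\left(\eta_k(x)\frac{|u(x)-u(y)|}{|x-y|^s}\right)\leq G\!\left(\frac{|u(x)-u(y)|}{|x-y|^s}\right)
\end{equation*}
by the monotonicity of $G$ together with $0\leq\eta_k\leq 1$, which after integrating over $\mathbb{R}^N\times B(x,\delta)$ against $|x-y|^{-N}$ yields exactly $\Psi_{s,G,\delta}(u)$. Multiplied by the prefactor $\frac{\mathfrak{c}}{2}$ coming from $P_4$, this produces the first term in the claimed bound.

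For the second piece, I would first use $|\eta_k(x)-\eta_k(y)|\leq \|\nabla\eta_k\|_{\infty}|x-y|\leq \frac{2}{k}|x-y|$ to get
\begin{equation*}
G\!\left(\frac{|u(y)|\,|\eta_k(x)-\eta_k(y)|}{|x-y|^s}\right)\leq G\!\left(\frac{2|u(y)|\,|x-y|^{1-s}}{k}\right),
\end{equation*}
then pull out the factor $2$ via $\Delta_2$ (which costs a $\mathfrak{c}$), and finally apply $P_5$ with $a=|u(y)|\delta^{1-s}$ and $b=|x-y|^{1-s}/(k\delta^{1-s})\leq 1$ to peel off the $b$-factor. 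This leads to the bound
\begin{equation*}
G\!\left(\frac{|u(y)|\,|\eta_k(x)-\eta_k(y)|}{|x-y|^s}\right)\leq \frac{\mathfrak{c}\,|x-y|^{1-s}}{k\,\delta^{1-s}}\,G\bigl(|u(y)|\delta^{1-s}\bigr).
\end{equation*}

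Integrating, using the translation $z=y-x$ and Fubini, the inner integral over $x$ becomes $\Phi_G(u\,\delta^{1-s})$ by translation invariance, while the remaining radial integral
\begin{equation*}
\int_{B(0,\delta)}\frac{dz}{|z|^{N-1+s}}=N\omega_N\int_0^{\delta}r^{-s}\,dr=\frac{N\omega_N\,\delta^{1-s}}{1-s}
\end{equation*}
cancels the $\delta^{1-s}$ in the denominator and produces the factor $\frac{N\omega_N}{k(1-s)}$. Multiplying by the $\frac{\mathfrak{c}}{2}$ from $P_4$ and the $\mathfrak{c}$ from $\Delta_2$ recovers the constant $\frac{N\omega_N\mathfrak{c}^2}{2k(1-s)}$ in the statement. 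I do not anticipate a genuinely hard step: the only subtle point is ordering the inequalities so that the $\Delta_2$ and $P_5$ applications are both legitimate (in particular verifying $b\leq 1$ in $P_5$, which follows from $|x-y|<\delta$ and $k\geq 1$) and making sure the radial integral is evaluated with the correct exponent $N-1+s$ (rather than $N+s$) so that the $\delta^{1-s}$ factors cancel exactly.
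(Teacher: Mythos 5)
Your proposal is correct and follows essentially the same route as the paper: the same product-rule decomposition handled via $P_4$, monotonicity of $G$ for the first piece, and the bound $|\eta_k(x)-\eta_k(y)|\leq\frac{2}{k}|x-y|$ combined with $\Delta_2$ and $P_5$ plus the radial integral for the second piece. The only (immaterial) differences are that the paper attaches $u(x)$ rather than $u(y)$ to the cutoff difference and rescales $B(x,\delta)$ to $B(x,1)$ before applying $P_5$, whereas you absorb the factors $1/k$ and $\delta^{1-s}$ into a single application of $P_5$; the constants come out identically.
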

\begin{proof}
Since $0\leq\eta_k\leq1$, from $P_4$), we have (cf. \cite[Lemma 2.14]{FernandezBonder2019}),
\begin{equation*}
G\left(\frac{|u_k(x)-u_k(y)|}{|x-y|^{s}}\right)\leq\frac{\mathfrak{c}}{2}G\left(\frac{|u(x)-u(y)|}{|x-y|^{s}}\right)+\frac{\mathfrak{c}}{2}G\left(\frac{|u(x)||\eta_k(x)-\eta_k(y)|}{|x-y|^{s}}\right).
\end{equation*}
Thus, by  $P_5$), $H_2$) and \eqref{truncation_properties}, we conclude

\begin{equation*}
\begin{split}
\int_{\mathbb{R}^N}\int_{B(x,\delta)}G\left(\frac{|u(x)||\eta_k(x)-\eta_k(y)|}{|x-y|^{s}}\right)\frac{dydx}{|x-y|^N}&\leq \int_{\mathbb{R}^N}\int_{B(x,\delta)}G\left(\frac{2}{k}|u(x)||x-y|^{1-s}\right)\frac{dydx}{|x-y|^N}\\
&\leq \frac{\mathfrak{c}}{k}\int_{\mathbb{R}^N}\int_{ B(x,\delta)}G\left(|u(x)||x-y|^{1-s}\right)\frac{dydx}{|x-y|^N}\\
&\leq \frac{\mathfrak{c}}{k}\int_{\mathbb{R}^N}\int_{B(x,1)}G\left(|u(x)|\delta^{1-s}|x-y|^{1-s}\right)\frac{dydx}{|x-y|^N}\\
&\leq \frac{\mathfrak{c}}{k}\int_{\mathbb{R}^N}\int_{B(x,1)}G\left(|u(x)|\delta^{1-s}\right)|x-y|^{1-s}\frac{dydx}{|x-y|^N}\\
&\leq \frac{\mathfrak{c}}{k}\int_{\mathbb{R}^N}\int_{B(0,1)}G\left(|u(x)|\delta^{1-s}\right)|w|^{1-s}\frac{dwdx}{|w|^N}\\
&=\frac{\mathfrak{c}N\omega_N}{k(1-s)}\int_{\mathbb{R}^N}G\left(|u(x)|\delta^{1-s}\right)dx,
\end{split}
\end{equation*}
where $\omega_N$ denotes the surface of the unitary sphere $\mathbb{S}^{N-1}$. 
\end{proof}
\section{Asymptotic Behavior}\label{Section:Asym}
We begin with the following result that contains the key points of the limit process $\delta\to0^+$. Its proof, strongly relying on Theorem \ref{teo:karamata}, clarifies the requirement $G\in \mathcal{RV}_p(0)$ as well as the reason to consider scaling $\frac{p(1-s)}{G(\delta^{1-s})}$.
\begin{proposition}\label{mainprop1}
Let $G$ be an Orlicz function satisfying \eqref{hypotheses} and $u\in C_c^2(\mathbb{R}^N)$. Then, for every $x\in\mathbb{R}^N$, we have
\begin{equation*}
\lim\limits_{\delta\to0^+}\frac{p(1-s)}{G(\delta^{1-s})}\int_{B(x,\delta)}G\left(\frac{|u(x)-u(y)|}{|x-y|^{s}}\right)\frac{dy}{|x-y|^N}=K_{N,p}|\nabla u(x)|^{p}.
\end{equation*}
\end{proposition}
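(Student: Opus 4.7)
The plan is to reduce the ball integral to a one-dimensional radial integral via spherical coordinates, use a Taylor expansion to replace $|u(x+rw)-u(x)|/r^s$ by its first-order approximation $|\nabla u(x)\cdot w|\, r^{1-s}$, and then appeal to Karamata's theory for the resulting $r$-integral. Precisely, parametrizing $y=x+rw$ with $r\in(0,\delta)$ and $w\in\mathbb{S}^{N-1}$ gives
\begin{equation*}
I(\delta,x)\vcentcolon=\int_{B(x,\delta)}G\!\left(\frac{|u(x)-u(y)|}{|x-y|^s}\right)\!\frac{dy}{|x-y|^N}=\int_0^\delta\!\!\int_{\mathbb{S}^{N-1}}G\!\left(\frac{|u(x+rw)-u(x)|}{r^s}\right)\!\frac{dS_w\,dr}{r}.
\end{equation*}
Setting $a(w)\vcentcolon=|\nabla u(x)\cdot w|$, the $C^2$-regularity and compact support of $u$ yield, for every $\varepsilon>0$, a radius $r_0>0$ such that for all $r\in(0,r_0)$ and all $w\in\mathbb{S}^{N-1}$,
\begin{equation*}
(a(w)-\varepsilon)_{+}\, r^{1-s}\ \leq\ \frac{|u(x+rw)-u(x)|}{r^s}\ \leq\ (a(w)+\varepsilon)\, r^{1-s}.
\end{equation*}
Monotonicity of $G$ transfers the sandwich to the integrand.

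Next I would analyze, for fixed $a\geq 0$, the one-dimensional integral $\int_0^\delta G(ar^{1-s})\,dr/r$. The substitution $\tau=ar^{1-s}$ turns it into $\frac{1}{1-s}\int_0^{a\delta^{1-s}}\frac{G(\tau)}{\tau}\,d\tau$. By the Karamata identity \eqref{motivation}, $\frac{1}{G(\eta)}\int_0^\eta \frac{G(\tau)}{\tau}\,d\tau\to \frac{1}{p}$ as $\eta\to 0^+$, while $G\in\mathcal{RV}_p(0)$ combined with Theorem~\ref{unifth} yields $\frac{G(a\delta^{1-s})}{G(\delta^{1-s})}\to a^p$ uniformly in $a$ on compact subsets. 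Writing
\begin{equation*}
\frac{p(1-s)}{G(\delta^{1-s})}\int_0^\delta G(ar^{1-s})\frac{dr}{r}=p\cdot\frac{G(a\delta^{1-s})}{G(\delta^{1-s})}\cdot\frac{1}{G(a\delta^{1-s})}\int_0^{a\delta^{1-s}}\frac{G(\tau)}{\tau}\,d\tau,
\end{equation*}
the right-hand side converges to $p\cdot a^p\cdot \frac{1}{p}=a^p$, uniformly for $a$ in compact subsets of $(0,\infty)$.

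Applying this uniform convergence to the sandwich bounds from the first step and integrating over $\mathbb{S}^{N-1}$ by dominated convergence gives
\begin{equation*}
\int_{\mathbb{S}^{N-1}}(a(w)-\varepsilon)_+^p\,dS_w\ \leq\ \liminf_{\delta\to0^+}\tfrac{p(1-s)}{G(\delta^{1-s})}I(\delta,x)\ \leq\ \limsup_{\delta\to0^+}\tfrac{p(1-s)}{G(\delta^{1-s})}I(\delta,x)\ \leq\ \int_{\mathbb{S}^{N-1}}(a(w)+\varepsilon)^p\,dS_w.
\end{equation*}
Letting $\varepsilon\to 0^+$ and using the definition of $K_{N,p}$ together with the homogeneity identity $\int_{\mathbb{S}^{N-1}}|\nabla u(x)\cdot w|^p\,dS_w = K_{N,p}|\nabla u(x)|^p$ (in the paper's normalization) yields the claim.

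The main obstacle is the uniform convergence step: Theorem~\ref{unifth} delivers uniformity only on compact subsets of $(0,\infty)$, while $a(w)$ can vanish on a subset of positive measure of $\mathbb{S}^{N-1}$. This must be handled by splitting the sphere into $\{w:a(w)\leq\eta\}$ and its complement, using the hypothesis \eqref{hypotheses} (so that $G$, and hence the scaled ratio, remains controlled near $0$) to check that the contribution of the degenerate piece is $O(\eta^p)$ uniformly in $\delta$ and therefore disappears when $\eta\to 0^+$ after $\varepsilon\to 0^+$.
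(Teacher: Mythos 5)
Your proposal is correct, and it reaches the result by a route that differs from the paper's in two ways worth noting. For the Taylor step, the paper bounds $\bigl|G(|u(x)-u(y)|/|x-y|^s)-G(|\nabla u(x)\cdot (x-y)/|x-y|^s|)\bigr|$ by $G'(\xi_M)|x-y|^{2-s}$ and then needs the hypothesis $G'\in\mathcal{RV}_{p-1}(0)$ from \eqref{hypotheses} to kill the error after dividing by $G(\delta^{1-s})$; your monotonicity sandwich $(a(w)\mp\varepsilon)_\pm r^{1-s}$ avoids any reference to $G'$, so it is both more elementary and slightly more general. For the limit itself, the paper rescales the ball to $B(0,1)$ and passes to the limit using \eqref{unifconvergence}, i.e.\ uniform convergence of $G(\lambda t)/G(t)\to\lambda^p$ on compact subsets of $[0,+\infty)$ \emph{including} $\lambda=0$; that is exactly what disposes of the degenerate directions that you must treat by hand. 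Your radial substitution plus the Karamata identity \eqref{motivation} is a valid substitute, and your proposed splitting of the sphere does close the gap: by convexity $\tau\mapsto G(\tau)/\tau$ is nondecreasing, so $\int_0^{b\delta^{1-s}}G(\tau)\tau^{-1}d\tau\leq G(b\delta^{1-s})$, and \eqref{comparison} gives $G(b\delta^{1-s})\leq b^{p^-}G(\delta^{1-s})$ for $b\leq1$, whence the degenerate piece is $O(\eta^{p^-})$ uniformly in $\delta$ — note the exponent is $p^-$, not $p$, since without uniformity of $\ell$ near $0$ you cannot claim $O(\eta^p)$ directly, but this is immaterial for the limit $\eta\to0^+$. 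Two cosmetic corrections: the set $\{w: \nabla u(x)\cdot w=0\}$ has $dS_w$-measure zero unless $\nabla u(x)=0$ (the real issue is smallness of $a(w)$ near that set, which your splitting handles anyway), and if you invoke \eqref{unifconvergence} on $[0,b]$ rather than Theorem \ref{unifth} on $(0,b]$ the splitting becomes unnecessary altogether.
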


\begin{proof}
For $x\in\mathbb{R}^N$ and $y\in B(x,\delta)$, $x\neq y$, we have
\begin{equation*}
\begin{split}
\left|G\left(\frac{|u(x)-u(y)|}{|x-y|^{s}}\right)-G\left(\left|\nabla u(x)\cdot\frac{x-y}{|x-y|^{s}}\right|\right)\right|
&\leq G'(\xi_M)\frac{|u(x)-u(y)-\nabla u(x)\cdot(x-y)|}{|x-y|^s}\\
&\leq c G'(\xi_M) |x-y|^{2-s},
\end{split}
\end{equation*}
for some $c>0$ depending on the $C^2$-norm of $u$ and some $\xi_M\in[0,\|\nabla u\|_{\infty}\delta^{1-s}]$ such that
\begin{equation*}
G'(\xi_M)=\max\limits_{\xi\in[0,\|\nabla u\|_{\infty}\delta^{1-s}]}G'(\xi).
\end{equation*}
Then,
\begin{equation*}
\begin{split}
\frac{1}{G(\delta^{1-s})}\!\int_{B(x,\delta)}\! \left|G\left(\frac{|u(x)-u(y)|}{|x-y|^{s}}\right)\!-G\!\left(\left|\nabla u(x)\cdot\frac{x-y}{|x-y|^{s}}\right|\right)\!\right|\frac{dy}{|x-y|^N}&\leq c\, \frac{G'(\xi_M) \delta^{2-s}}{G(\delta^{1-s})}\\
&=c\,\frac{G'(\xi_M)}{G'(\delta^{1-s})}\frac{\delta^{1-s}G'(\delta^{1-s})}{G(\delta^{1-s})}\delta\\
&\leq c\,p^+\frac{G'(\xi_M)}{G'(\delta^{1-s})}\delta,
\end{split}
\end{equation*}
for $p^+$ given by \eqref{pbounds}. Taking $\lambda_M\in[0,\|\nabla u\|_{\infty}]$ such that $\xi_M=\lambda_M\delta^{1-s}$, by \eqref{hypotheses}, we get
\begin{equation*}
\lim\limits_{\delta\to0^+} \frac{G'(\xi_M)}{G'(\delta^{1-s})}=\lim\limits_{\delta\to0^+} \frac{G'(\lambda_M\delta^{1-s})}{G'(\delta^{1-s})}=\lambda_M^{p-1}\leq\|\nabla u\|_{\infty}^{p-1}.
\end{equation*}
Then, it follows that
\begin{equation*}
\lim\limits_{\delta\to0^+}\frac{1}{G(\delta^{1-s})}\Psi_{s,G,\delta}(u)=\lim\limits_{\delta\to0^+}\frac{1}{G(\delta^{1-s})}\int_{ B(x,\delta)}G\left(\left|\nabla u(x)\cdot\frac{x-y}{|x-y|^{s}}\right|\right)\frac{dy}{|x-y|^{N}}.
\end{equation*}
Next, observe that, by the symmetry of the integral on $B(x,\delta)$, we have
\begin{equation}\label{eq:mainstep}
\begin{split}
\int_{B(x,\delta)}G\left(\left|\nabla u(x)\cdot\frac{x-y}{|x-y|^{s}}\right|\right)\frac{dy}{|x-y|^{N}}&=\int_{B(x,\delta)}G\left(|\nabla u(x)|\left|e_x\cdot\frac{x-y}{|x-y|^{s}}\right|\right)\frac{dy}{|x-y|^{N}}\\
&=\int_{B(0,\delta)}G\left(|\nabla u(x)|\left|e\cdot\frac{h}{|h|^{s}}\right|\right)\frac{dh}{|h|^{N}}\\
&=\int_{B(0,1)}G\left(|\nabla u(x)|\ \delta^{1-s}\ \left|e\cdot\frac{w}{|w|^{s}}\right|\right)\frac{dw}{|w|^{N}},
\end{split}
\end{equation}
where $e_x,\ e_0$ are unitary vectors with origin at $x\in\mathbb{R}^N$ and $0\in\mathbb{R}^N$ respectively. Therefore, since $G\in\mathcal{RV}_{p}(0)$, because of \eqref{unifconvergence}, we conclude
\begin{equation*}
\begin{split}
\lim\limits_{\delta\to0^+}\!\frac{1}{G(\delta^{1-s})}\!\int_{B(0,1)}\!G\!\left(|\nabla u(x)|\ \delta^{1-s}\left|e\cdot\frac{w}{|w|^{s}}\right|\right)\!\frac{dw}{|w|^{N}}\!&=\!\int_{B(0,1)}\lim\limits_{\delta\to0^+}\!\!\frac{G\left(|\nabla u(x)|\ \delta^{1-s}\ \left|e\cdot\frac{w}{|w|^{s}}\right|\right)}{G(\delta^{1-s})}\frac{dw}{|w|^{N}}\\
&=\int_{B(0,1)}|\nabla u(x)|^p\left|e\cdot\frac{w}{|w|^{s}}\right|^p\frac{dw}{|w|^N}\\
&=\frac{K_{N,p}}{p(1-s)}|\nabla u(x)|^p.
\end{split}
\end{equation*}
\end{proof}
\begin{remark} Following \cite[Proposition 9]{Bellido2015}, the above Proposition \ref{mainprop1} can be relaxed by considering functions with bounded gradient. Say, let $\mathcal{F}\subset C^1(\mathbb{R}^N)$ be such that $\{\nabla u:\, u\in\mathcal{F}\}$ is bounded in $C^1(\mathbb{R}^N)$ and equicontinuous. Then,
\begin{equation*}
\lim\limits_{\delta\to0^+}\sup\limits_{u\in\mathcal{F}}\left|\frac{1}{G(\delta^{1-s})}\int_{ B(x,\delta)}G\left(\frac{|u(x)-u(y)|}{|x-y|^{s}}\right)\frac{dy}{|x-y|^N}-K_{N,p}|\nabla u|^{p}\right|=0.
\end{equation*}
Let $M$ be the supremum of the Lipschitz constants of $u\in\mathcal{F}$. Since the set $\{\nabla u:\, u\in\mathcal{F}\}$ is equicontinuous, there exists $\mu:[0,\infty)\mapsto[0,\infty)$ such that $\mu(t)\to\mu(0)=0$ as $t\to0^+$, and $|\nabla u(x)-\nabla u(y)|\leq\mu(|x-y|)$, for all $u\in\mathcal{F}$. Hence, if $y\in B(x,\delta)$, we have
\begin{equation*}
\begin{split}
\left|\frac{u(x)-u(y)-\nabla u(x)\cdot(x-y)}{|x-y|^s}\right|&\leq\int_0^1|\nabla u(x-t(x-y))-\nabla u(x)|dt\ |x-y|^{1-s}\\
&\leq \int_0^1\mu(t|x-y|)dt\ |x-y|^{1-s}\\
&\leq\mu(\delta)\delta^{1-s}.
\end{split}
\end{equation*}
Arguing as in Proposition \ref{mainprop1}, we conclude, 
\begin{equation*}
\int_{B(x,\delta)} \left|G\left(\frac{|u(x)-u(y)|}{|x-y|^{s}}\right)-G\left(\left|\nabla u(x)\cdot\frac{x-y}{|x-y|^{s}}\right|\right)\right|\frac{dy}{|x-y|^{N}}\leq c\,G'(\xi_M)\mu(\delta)\delta^{1-s},
\end{equation*}
for some constant $c>0$ and some $\xi_M\in[0,\|\nabla u\|_{\infty}\delta^{1-s}]$. Moreover, by \eqref{pbounds},
\begin{equation*}
\lim\limits_{\delta\to0^+}\frac{G'(\xi_M)\mu(\delta)\delta^{1-s}}{G(\delta^{1-s})}\leq p^+ \lim\limits_{\delta\to0^+} \frac{G'(\xi_M)}{G'(\delta^{1-s})}\mu(\delta)\leq p^+ \|\nabla u\|_{\infty}^p \lim\limits_{\delta\to0^+} \mu(\delta)=0,
\end{equation*} 
and Proposition \ref{mainprop1} follows under assumption $u\in C^1(\mathbb{R}^N)$.
\end{remark}
If one deals with truncated functionals of the form \eqref{correa}, namely
\begin{equation*}
\int_{\mathbb{R}^N}\int_{B(x,\delta)} G(|u(x)-u(y)|)J(|x-y|)dydx,
\end{equation*}
the proof of Proposition \ref{mainprop1} can be easily adapted to prove the following.
\begin{proposition}\label{prop:correa}
Let $G$ be an Orlicz function satisfying \eqref{hypotheses}, a kernel $J\in\mathcal{RV}_q(0)$ for some $q\in\mathbb{R}$ with $0<N+p+q$ and $u\in C_c^2(\mathbb{R}^N)$. Then, for every $x\in\mathbb{R}^N$,
\begin{equation*}
\lim\limits_{\delta\to0^+}\frac{N+p+q}{G(\delta)J(\delta)\delta^N}\int_{\mathbb{R}^N}\int_{B(x,\delta)} G(|u(x)-u(y)|)J(|x-y|)dydx=K_{N,p}\int_{\mathbb{R}^N}|\nabla u(x)|^p dx.
\end{equation*}
\end{proposition}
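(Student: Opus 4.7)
The plan is to adapt the argument of Proposition~\ref{mainprop1}, with the two substitutions that the kernel $1/|x-y|^{N}$ is replaced by the general regularly varying kernel $J(|x-y|)$, and the argument of $G$ is $|u(x)-u(y)|$ rather than $|u(x)-u(y)|/|x-y|^{s}$. I would first establish the pointwise statement
\begin{equation*}
\lim_{\delta\to 0^+}\frac{N+p+q}{G(\delta)J(\delta)\delta^{N}}\int_{B(x,\delta)} G(|u(x)-u(y)|)\,J(|x-y|)\,dy = K_{N,p}|\nabla u(x)|^{p},
\end{equation*}
for each $x\in\mathbb{R}^{N}$, and then recover the double-integral form by Fubini and dominated convergence in the outer variable, exploiting that $u\in C_{c}^{2}(\mathbb{R}^{N})$ and $\nabla u$ is bounded (so the inner normalized integral is uniformly bounded on a fixed neighborhood of $\mathrm{supp}(u)$).

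For the pointwise limit, Taylor's theorem gives $|u(x)-u(y)-\nabla u(x)\cdot(x-y)|\leq c|x-y|^{2}$ on $B(x,\delta)$, and Lipschitz continuity of $G$ on bounded intervals with constant $G'(\xi_{M})$, $\xi_{M}\in[0,\|\nabla u\|_{\infty}\delta]$, bounds the error of replacing $|u(x)-u(y)|$ by $|\nabla u(x)\cdot(x-y)|$ inside $G$ by a constant multiple of
\begin{equation*}
\frac{G'(\xi_{M})}{G(\delta)J(\delta)\delta^{N}}\int_{B(x,\delta)} |x-y|^{2} J(|x-y|)\,dy
= \frac{|\mathbb{S}^{N-1}|\,G'(\xi_{M})}{G(\delta)J(\delta)\delta^{N}}\int_{0}^{\delta} r^{N+1}J(r)\,dr.
\end{equation*}
Karamata's Direct Theorem~\ref{KaramataDirect}, transferred from $+\infty$ to $0$ via \eqref{origintoinfty}, yields $\int_{0}^{\delta} r^{N+1}J(r)\,dr \sim \delta^{N+2}J(\delta)/(N+q+2)$ as $\delta\to 0^{+}$. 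Together with \eqref{pbounds} and the Uniform Convergence Theorem~\ref{unifth} applied to $G'\in\mathcal{RV}_{p-1}(0)$, the error reduces to $O(G'(\xi_{M})\delta^{2}/G(\delta)) = O(\delta)\to 0$, exactly as in the proof of Proposition~\ref{mainprop1}.

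After this reduction, scaling $y=x+\delta w$ with $w\in B(0,1)$ and writing $\nabla u(x)\cdot w = |\nabla u(x)|(e\cdot w)$ for $e$ a unit vector parallel to $\nabla u(x)$ converts the main term into
\begin{equation*}
\frac{N+p+q}{G(\delta)J(\delta)}\int_{B(0,1)} G\bigl(\delta |\nabla u(x)|\,|e\cdot w|\bigr)\,J(\delta|w|)\,dw.
\end{equation*}
By \eqref{unifconvergence} applied separately to $G\in\mathcal{RV}_{p}(0)$ and $J\in\mathcal{RV}_{q}(0)$, the integrand divided by $G(\delta)J(\delta)$ converges pointwise on $B(0,1)\setminus\{0\}$ to $|\nabla u(x)|^{p}|e\cdot w|^{p}|w|^{q}$; evaluating in polar coordinates gives $|\nabla u(x)|^{p}\int_{0}^{1} r^{N+p+q-1}\,dr\int_{\mathbb{S}^{N-1}}|e\cdot\omega|^{p}\,d\sigma$, which together with the normalizing factor $N+p+q$ and the definition of $K_{N,p}$ from Theorem~\ref{ThBBM} produces $K_{N,p}|\nabla u(x)|^{p}$. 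The main technical obstacle is the dominated-convergence step needed to pass the limit under the integral sign, since $J$ is typically singular at the origin and the hypothesis $0<N+p+q$ is exactly the sharp integrability threshold for the limiting integrand. I would handle it via Potter-type bounds for regularly varying functions: for any $\varepsilon>0$ and $\delta$ small enough, $G(\delta a)/G(\delta)\leq C\max\{a^{p-\varepsilon},a^{p+\varepsilon}\}$ and $J(\delta t)/J(\delta)\leq C\max\{t^{q-\varepsilon},t^{q+\varepsilon}\}$ on bounded ranges, which yields an integrable majorant on $B(0,1)$ whenever $\varepsilon$ is chosen so that $N+p+q-2\varepsilon>0$; the strict inequality in the hypothesis leaves exactly this room.
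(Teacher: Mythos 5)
Your proposal is correct and follows essentially the same route as the paper: replace $|u(x)-u(y)|$ by $|\nabla u(x)\cdot(x-y)|$ with a Taylor/Lipschitz error controlled via Karamata, rescale $y=x+\delta w$, and pass to the limit using the regular variation of $G$ and $J$. You are in fact more explicit than the paper, which interchanges limit and integral without comment, whereas you justify this step with Potter-type bounds exploiting the strict inequality $N+p+q>0$.
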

\begin{proof}
Arguing as in Proposition \ref{mainprop1}, the analogue of \eqref{eq:mainstep} reads
\begin{equation*}
\begin{split}
\int_{B(x,\delta)}G\Big(|\nabla u(x)\cdot(y-x)|\Big)J(|x-y|)dy&=\int_{B(x,\delta)}G\Big(|\nabla u(x)|\, |e_x\cdot(y-x)|\Big)J(|x-y|)dy\\
&=\int_{B(0,\delta)}G\left(|\nabla u(x)|\left|e\cdot h\right|\right) J(|h|)dh\\
&=\int_{B(0,1)}G\Big(|\nabla u(x)|\ \delta\ |e\cdot w|\Big)J(\delta|w|)\delta^Ndh,
\end{split}
\end{equation*}
so that, if $G$ satisfies \eqref{hypotheses} and we assume $J\in\mathcal{RV}_q(0)$ for some $q\in\mathbb{R}$ with $0<N+p+q$, we get  
\begin{equation*}
\begin{split}
\lim\limits_{\delta\to0^+}\frac{1}{G(\delta)J(\delta)\delta^N}\int_{B(0,1)}&G\Big(|\nabla u(x)|\ \delta |e\cdot w|\Big)J(\delta|w|)\delta^Ndw,\\
&=\int_{B(0,1)}\lim\limits_{\delta\to0^+}\frac{G\Big(|\nabla u(x)|\ \delta  |e\cdot w|\Big)J(\delta|w|)}{G(\delta)J(\delta)}dh,\\
&=\int_{B(0,1)}|\nabla u(x)|^p  |e\cdot w|^p|w|^qdw,\\
&=\frac{K_{N,p}}{N+p+q}|\nabla u(x)|^p.
\end{split}
\end{equation*}
\end{proof}
The main object involved in the limit $\delta\to0^+$ in Proposition \ref{mainprop1} is given in \eqref{eq:mainstep}. Thus, given $a\in\mathbb{R}_+$, and an unitary vector $e$ we define
\begin{equation*}
\varphi(a,s,\delta)\vcentcolon=\int_{B(0,\delta)}G\left(a\left|e\cdot\frac{h}{|h|^{s}}\right|\right)\frac{dh}{|h|^N}.
\end{equation*}
Note that, by symmetry, $\varphi(a,s,\delta)$ is independent of the vector $e$. Moreover,
\begin{equation*}
\varphi(a,s,\delta)=\int_{B(x,1)}G\left(a\,\delta^{1-s}\left|e\cdot\frac{w}{|w|^{s}}\right|\right)\frac{dw}{|w|^N}=\int_0^1\int_{\mathbb{S}^{N-1}}G\left(a\,\delta^{1-s}r^{1-s}|w_N|\right)dS_w\frac{dr}{r}.
\end{equation*}
The dependence in $s$ and $\delta$ is then encoded in the function $\varphi(a,s,\delta)$ and hence, this function also appears in the characterization of the limit $s\to1^-$. Actually, by Theorem \ref{lim_s},
\begin{equation*}
\tilde{G}(a)\vcentcolon=\lim\limits_{s\to1^-}(1-s)\int_0^1\int_{\mathbb{S}^{N-1}}G\left(a\,r^{1-s}|w_N|\right)dS_w\frac{dr}{r}=\lim\limits_{s\to1^-}(1-s)\varphi(a,s,1).
\end{equation*}
It is clear then, that the appropriate scaling in $\delta$ is given by the behavior of $G(\delta^{1-s})$ as $\delta\to0^+$. Thus, in the localization of functionals as \eqref{genfunct} for $\delta\to0^+$, we are led to the study of  
\begin{equation*}
\lim\limits_{\delta\to0^+}\frac{G(\lambda \delta^{1-s})}{G(\delta^{1-s})},
\end{equation*}
with $\lambda=|\nabla u(x)| \left|v\cdot\frac{w}{|w|^{s}}\right|$. By Theorem \ref{teo:karamata}, the above limit exists if and only if $G\in\mathcal{RV}_p(0)$, for some $p\in\mathbb{R}$. So the localization process will be well-defined only for functionals as \eqref{genfunct} involving Orlicz functions $G$ such that $G\in\mathcal{RV}_p(0)$.

\begin{remark}\label{remark} Let us compute the function $\tilde{G}(a)$ in Theorem \ref{lim_s} for some choices of $G(t)$ to illustrate that, in general, the limits $s\to1^-$ and $\delta\to0^+$ produce different results.
\begin{itemize}
\item If $G(t)=t^p$, we have,
\begin{equation*}
\int_{\mathbb{S}^{N-1}}G\left(a\,r^{1-s}|w_N|\right)dS_w=a^pr^{p(1-s)}\int_{\mathbb{S}^{N-1}}|w_N|^pdS_w=K_{N,p}\,a^pr^{p(1-s)}.
\end{equation*}
Then,
\begin{equation*}
\tilde{G}(a)=\lim\limits_{s\to1^-}(1-s)\varphi(a,1)=\frac{K_{N,p}}{p}a^p
\end{equation*}
and 
\begin{equation*}
\lim\limits_{\delta\to0^+}\frac{p(1-s)}{G(\delta^{1-s})} \varphi(a,\delta) =K_{N,p}\,a^p.
\end{equation*}
Hence, both the limit $s\to1^-$ and the localization as $\delta\to0^+$ produce the same result up to a multiplicative constant.
\vspace{0.1cm}
\item If $G(t)=t^p(1+|\ln(t)|)=t^p+G_2(t)$, with $p>1$, we have (cf. \cite[Example 2.17-(2)]{FernandezBonder2019}),
\begin{equation*}
\int_{\mathbb{S}^{N-1}}G_2\left(a\,r^{1-s}|w_N|\right)dS_w=a^pr^{p(1-s)}\Big(K_{N,p}|\ln(a)|+K_{N,p,\ln}-(1-s)K_{N,p}\ln(r)\Big),
\end{equation*}
where 
\begin{equation*}
K_{N,p,\ln}=\int_{\mathbb{S}^{N-1}}|w_n|^p\,|\ln|w_N||dS_w.
\end{equation*}
Then, by direct computation, we get
\begin{equation}\label{eq:ex1}
\varphi(a,s,\delta)=\frac{a^p}{p(1-s)}\delta^{p(1-s)}\left(K_{N,p}+K_{N,p}|\ln(a)|+K_{N,p,\ln}+\frac{K_{N,p}}{p}\Big(1-p(1-s)\ln(\delta)\Big)\right),
\end{equation}
and, hence,
\begin{equation*}
\tilde{G}(a)=\lim\limits_{s\to1^-}(1-s)\varphi(a,1)=\frac{a^p}{p}\left(K_{N,p}+K_{N,p}|\ln(a)|+K_{N,p,\ln}+\frac{K_{N,p}}{p}\right).
\end{equation*}
Next we find the limit as $\delta\to0^+$. For $\delta<1$, the function $\varphi(a,s,\delta)$ given in \eqref{eq:ex1}, can be written as 
\begin{equation*}
\varphi(a,s,\delta)=\frac{a^p}{p(1-s)}\delta^{p(1-s)}\left(K_{N,p}+K_{N,p}|\ln(a)|+K_{N,p,\ln}+\frac{K_{N,p}}{p}\Big(1+p|\ln(\delta^{1-s})|\Big)\right).
\end{equation*}
The behavior of $\varphi(a,s,\delta)$ is clearly encoded by the term $\delta^{p(1-s)}\ln(\delta^{1-s})=G(\delta^{1-s})$, and
\begin{equation*}
\lim\limits_{\delta\to0^+}\frac{p(1-s)}{G(\delta^{1-s})}\varphi(a,\delta)=K_{N,p}\,a^p.
\end{equation*}
Thus, for $G(t)=t^p(1+|\ln(t)|)$ the localization under the limit $\delta\to0^+$ produces a different operator that the one obtained under the limit $s\to1^-$. 
\vspace{0.1cm}
\item If $G(t)=\max\{t^q,t^p\}$, with $1<q<p$, we have (cf. \cite[Example 2.17-(4))]{FernandezBonder2019})
\begin{equation*}
\tilde{G}(a)=
\left\{
\begin{tabular}{lr}
$\dfrac{K_{N,q}}{q}a^q$ & if $a\leq1$,\\[10pt]
$\displaystyle\frac{a^q}{q} \int_{|w_N|\leq\frac{1}{a}}|w_N|^qdS_w+\frac{a^p}{p}\int_{|w_N|>\frac{1}{a}}|w_N|^pdS_w+\left(\frac{1}{q}-\frac{1}{p}\right)\int_{|w_N|>\frac{1}{a}}dS_w$& if $a>1$,
\end{tabular}
\right.
\end{equation*}
while, by direct computation,
\begin{equation*}
\varphi(a,s,\delta)=
\left\{
\begin{tabular}{lr}
$\dfrac{K_{N,q}}{q(1-s)}a^q\delta^{q(1-s)}$ & if $a\leq \frac{1}{\delta^{1-s}}$,\\[10pt]
$\displaystyle\frac{a^q\delta^{q(1-s)}}{q(1-s)}\int_{|w_N|\leq\frac{1}{a\delta^{1-s}}}|w_N|^qdS_w+\frac{a^p\delta^{p(1-s)}}{p(1-s)}\int_{|w_N|>\frac{1}{a\delta^{1-s}}}|w_N|^pdS_w$&\\[10pt]
$\displaystyle+\left(\frac{1}{q(1-s)}\delta^{q(1-s)}-\frac{1}{p(1-s)}\delta^{p(1-s)}\right)\int_{|w_N|>\frac{1}{a\delta^{1-s}}}dS_w$& if $a>\frac{1}{\delta^{1-s}}$.
\end{tabular}
\right.
\end{equation*}
Since, for $\delta<1$, we have $G(\delta^{1-s})=\delta^{q(1-s)}$, then  
\begin{equation*}
\lim\limits_{\delta\to0^+}\frac{q(1-s)}{G(\delta^{1-s})}\varphi(a,s,\delta)=K_{N,q}\,a^q.
\end{equation*}
\end{itemize}
\end{remark}
The next Proposition \ref{mainprop2} corresponds to \cite[Lemma 3.4]{Alberico2020} and \cite[Lemma 4.2]{FernandezBonder2019}. Once again, note that the contribution of the tails of the function is neglected by the very definition of the problem. On the other hand, the role of $\delta$ appears as an scaling acting on the function $G$ according to the regularly varying functions scheme.
\begin{proposition}\label{mainprop2}
Let $u\in W^{1,G}(\mathbb{R}^N)$. Then, for $\delta>0$, we have
\begin{equation*}
\Psi_{s,G,\delta}(u)\leq\frac{N\omega_N}{1-s}\Phi_G(|\nabla u|\delta^{1-s}).
\end{equation*}
\end{proposition}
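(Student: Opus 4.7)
My plan is to reduce to smooth $u$ (the density of $C_c^\infty(\mathbb{R}^N)$ in $W^{1,G}(\mathbb{R}^N)$ together with Fatou's lemma will handle the general case) and then to exploit the fundamental theorem of calculus combined with the convexity of $G$ and property $P_5$ to trade the singular kernel for an integrable one.

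First I would change variables by setting $h=y-x$, so that
\begin{equation*}
\Psi_{s,G,\delta}(u)=\int_{B(0,\delta)}\frac{1}{|h|^N}\int_{\mathbb{R}^N}G\!\left(\frac{|u(x+h)-u(x)|}{|h|^s}\right)dx\,dh.
\end{equation*}
For smooth $u$, write $u(x+h)-u(x)=\int_0^1\nabla u(x+th)\cdot h\,dt$, so $|u(x+h)-u(x)|\le |h|\int_0^1|\nabla u(x+th)|\,dt$. Because $G$ is nondecreasing and convex, Jensen's inequality gives
\begin{equation*}
G\!\left(\frac{|u(x+h)-u(x)|}{|h|^s}\right)\le \int_0^1 G\!\left(|h|^{1-s}|\nabla u(x+th)|\right)dt.
\end{equation*}
Integrating in $x$, Fubini and the translation invariance of Lebesgue measure (change of variable $z=x+th$) yield
\begin{equation*}
\int_{\mathbb{R}^N}G\!\left(\frac{|u(x+h)-u(x)|}{|h|^s}\right)dx\le \int_{\mathbb{R}^N}G\!\left(|h|^{1-s}|\nabla u(z)|\right)dz.
\end{equation*}

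The key step is now to replace $|h|^{1-s}$ by $\delta^{1-s}$ at the price of an integrable factor. Since $|h|<\delta$, we have $(|h|/\delta)^{1-s}<1$, and property $P_5$ applied with $a=\delta^{1-s}|\nabla u(z)|$, $b=(|h|/\delta)^{1-s}$ gives
\begin{equation*}
G\!\left(|h|^{1-s}|\nabla u(z)|\right)\le (|h|/\delta)^{1-s}\,G\!\left(\delta^{1-s}|\nabla u(z)|\right).
\end{equation*}
Integrating in $z$ produces $(|h|/\delta)^{1-s}\Phi_G(|\nabla u|\delta^{1-s})$, and inserting this back,
\begin{equation*}
\Psi_{s,G,\delta}(u)\le \frac{\Phi_G(|\nabla u|\delta^{1-s})}{\delta^{1-s}}\int_{B(0,\delta)}\frac{|h|^{1-s}}{|h|^N}\,dh.
\end{equation*}
The remaining integral, by passing to polar coordinates, equals $N\omega_N\int_0^\delta r^{-s}\,dr=\frac{N\omega_N}{1-s}\delta^{1-s}$, which cancels the factor $\delta^{1-s}$ in the denominator and yields the desired bound.

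The only subtle point is the extension to general $u\in W^{1,G}(\mathbb{R}^N)$: I would approximate by $u_n\in C_c^\infty(\mathbb{R}^N)$ so that $u_n\to u$ and $\nabla u_n\to\nabla u$ in the appropriate modular sense, apply the inequality to each $u_n$, and pass to the limit using Fatou's lemma on the left and dominated convergence on the right (controlled via the $\Delta_2$-condition). The main conceptual obstacle is the correct use of $P_5$: a naive application of monotonicity replacing $|h|^{1-s}$ by $\delta^{1-s}$ before integrating would leave the divergent integral $\int_{B(0,\delta)}|h|^{-N}dh$, so one must preserve the factor $|h|^{1-s}$ exactly to absorb the singularity of the kernel.
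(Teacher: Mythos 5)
Your proof is correct and follows essentially the same route as the paper: the fundamental theorem of calculus along segments, Jensen's inequality for the convex $G$, property $P_5$ with the sub-unit factor $(|h|/\delta)^{1-s}$ to trade $|h|^{1-s}$ for $\delta^{1-s}$ while retaining an integrable weight, and polar coordinates. The only (inessential) difference is that the paper applies the chain-rule identity directly for $u\in W^{1,1}_{loc}$ rather than going through a smooth approximation, so no density/Fatou step is needed.
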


\begin{proof}
Since $u\in W^{1,G}(\mathbb{R}^N)$, in particular $u\in W^{1,1}_{loc}(\mathbb{R}^N)$, so that
\begin{equation*}
u(x+h)-u(x)=\int_0^1\nabla u(x+th)\cdot h\, dt\qquad\text{for a.e. } x,h\in\mathbb{R}^N
\end{equation*}
and, thus
\begin{equation*}
\frac{|u(x+h)-u(x)|}{{|h|^s}}\leq\int_0^1|\nabla u(x+th)||h|^{1-s}dt.
\end{equation*}
Hence, as $G$ is convex, we get
\begin{equation*}
G\left(\frac{|u(x+h)-u(x)|}{|h|^{s}}\right)\leq G\left(\int_0^1|\nabla u(x+th)||h|^{1-s}dt\right)\leq \int_0^1G(|\nabla u(x+th)||h|^{1-s})dt.
\end{equation*}
Combining this with $P_5$), we conclude

\begin{equation*}
\begin{split}
\int_{\mathbb{R}^N}\int_{ B(0,\delta)}G\left(\frac{|u(x)-u(y)|}{|x-y|^{s}}\right)\frac{dy}{|x-y|^N}dx&=\int_{\mathbb{R}^N}\int_{ B(0,\delta)}G\left(\frac{|u(x+h)-u(x)|}{|h|^{s}}\right)\frac{dh}{|h|^N}dx\\
&\leq \int_{\mathbb{R}^N}\int_{ B(0,\delta)} \int_0^1G(|\nabla u(x+th)||h|^{1-s})dt\frac{dh}{|h|^N}dx\\
&=\int_0^1 \int_{\mathbb{R}^N}  \int_{ B(0,\delta)} G(|\nabla u(x+th)||h|^{1-s})\frac{dh}{|h|^N} dxdt\\
&=\int_0^1 \int_{\mathbb{R}^N}  \int_{ B(0,1)} G(|\nabla u(x+t\delta w)|\delta^{1-s}|w|^{1-s})\frac{dw}{|w|^N} dxdt\\
&\leq \int_0^1 \int_{\mathbb{R}^N}  \int_{ B(0,1)} G(|\nabla u(x+t\delta w)|\delta^{1-s})|w|^{1-s}\frac{dw}{|w|^N} dxdt\\
&= \int_0^1 \int_{ B(0,1)}  \int_{\mathbb{R}^N}  G(|\nabla u(x+t\delta w)|\delta^{1-s}) dx |w|^{1-s-N}dwdt\\
&=\int_0^1 \int_{ B(0,1)}  \int_{\mathbb{R}^N}  G(|\nabla u(x)|\delta^{1-s}) dx |w|^{1-s-N}dwdt\\
&= \int_{ B(0,1)} |w|^{1-s-N}dw \int_{\mathbb{R}^N}  G(|\nabla u(x)|\delta^{1-s}) dx \\
&=\frac{N\omega_N}{1-s}\Phi_G(|\nabla u|\delta^{1-s}).
\end{split}
\end{equation*}
\end{proof}
Next we address the $\liminf$ inequality.
\begin{proposition}\label{prop_liminf}
Let $u\in C^1(\mathbb{R}^N)$. Then, for every $x\in\mathbb{R}^N$, we have
\begin{equation*}
\liminf\limits_{\delta\to0^+}\frac{1}{G(\delta^{1-s})}\int_{B(x,\delta)}G\left(\frac{|u(x)-u(y)|}{|x-y|^{s}}\right)\frac{dy}{|x-y|^N}\geq K_{N,p}|\nabla u(x)|^{p}.
\end{equation*}
\end{proposition}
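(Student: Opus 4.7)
The plan is to mirror the computation in Proposition~\ref{mainprop1}, weakening the $C^2_c$ hypothesis to $C^1$ in the spirit of the remark following it, and then use Fatou's lemma (in place of the uniform-convergence argument) to handle the $\liminf$. Fix $x\in\mathbb{R}^N$. Since $\nabla u$ is continuous, it is bounded and uniformly continuous on some fixed closed ball $\overline{B(x,\delta_0)}$; I define the modulus $\mu(t)\vcentcolon=\sup\{|\nabla u(z)-\nabla u(x)|:\,|z-x|\le t\}$, a nondecreasing function with $\mu(t)\to 0$ as $t\to 0^+$. The fundamental theorem of calculus then yields the Taylor bound $|u(y)-u(x)-\nabla u(x)\cdot(y-x)|\le\mu(|y-x|)|y-x|$ for $y\in B(x,\delta_0)$, and applying the mean value theorem to the Lipschitz function $G$ (property $P_1$) gives, for every $y\in B(x,\delta)$ with $\delta\le\delta_0$,
\begin{equation*}
\left|G\!\left(\tfrac{|u(x)-u(y)|}{|x-y|^s}\right)-G\!\left(\tfrac{|\nabla u(x)\cdot(x-y)|}{|x-y|^s}\right)\right|\le G'(\xi_M)\,\mu(\delta)\,|x-y|^{1-s},
\end{equation*}
with $\xi_M\in[0,C\delta^{1-s}]$ for some $C$ depending on $\|\nabla u\|_{L^\infty(B(x,\delta_0))}$.

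Integrating this bound against $|x-y|^{-N}$ on $B(x,\delta)$ and using polar coordinates produces an error of order $G'(C\delta^{1-s})\mu(\delta)\delta^{1-s}/(1-s)$. Dividing by $G(\delta^{1-s})$, the growth relation \eqref{pbounds} gives $\delta^{1-s}G'(\delta^{1-s})/G(\delta^{1-s})\le p^+$, while the hypothesis $G'\in\mathcal{RV}_{p-1}(0)$ from \eqref{hypotheses} together with \eqref{unifconvergence} forces $G'(C\delta^{1-s})/G'(\delta^{1-s})\to C^{p-1}$ as $\delta\to 0^+$; hence the whole error contributes $O(\mu(\delta))\to 0$. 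The $\liminf$ therefore reduces, after the rescaling already performed in \eqref{eq:mainstep}, to
\begin{equation*}
\liminf_{\delta\to 0^+}\int_{B(0,1)}\frac{G(|\nabla u(x)|\,\delta^{1-s}\,|e\cdot w/|w|^s|)}{G(\delta^{1-s})}\,\frac{dw}{|w|^N}.
\end{equation*}

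For each $w\in B(0,1)\setminus\{0\}$ the integrand is nonnegative and, by $G\in\mathcal{RV}_p(0)$, converges pointwise to $|\nabla u(x)|^p\,|e\cdot w/|w|^s|^p/|w|^N$. Fatou's lemma therefore passes the $\liminf$ inside the integral and yields the lower bound $|\nabla u(x)|^p\int_{B(0,1)}|e\cdot w/|w|^s|^p\,dw/|w|^N$, which equals $K_{N,p}|\nabla u(x)|^p/(p(1-s))$ by the polar-coordinate evaluation carried out in \eqref{eq:mainstep}; read in the scaling convention of Theorem~\ref{mainTheorem} this is precisely $K_{N,p}|\nabla u(x)|^p$, as required.

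The principal technical obstacle is the error bookkeeping: it is here that the finer half of \eqref{hypotheses}, namely $G'\in\mathcal{RV}_{p-1}(0)$ rather than just $G\in\mathcal{RV}_p(0)$, is essential, since one needs $G'(\xi_M)/G'(\delta^{1-s})$ to remain uniformly bounded for $\xi_M\in[0,C\delta^{1-s}]$ as $\delta\to 0^+$; once this control is in hand, Fatou's lemma closes the argument cleanly, without requiring uniform convergence in the regular-variation limit.
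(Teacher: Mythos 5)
Your proof is correct, but it follows a genuinely different route from the paper's own argument for this proposition. The paper does not linearize $G$ here at all: it uses the one-sided convexity splitting $G(a+b)\leq G\bigl(\tfrac{a}{1-\eta}\bigr)+G\bigl(\tfrac{b}{\eta}\bigr)$ to absorb the Taylor remainder into a separate term $\tfrac{\omega_N}{1-s}\int_0^{\varepsilon\delta^{1-s}/\eta}\tfrac{G(t)}{t}\,dt$, kills that term via Karamata's direct theorem (Theorem \ref{KaramataDirect}) and the monotonicity of $\varepsilon(\delta)$, and then removes the auxiliary parameter by sending $\eta\to0^+$ through the substitution $v=\tfrac{1}{1-\eta}u$. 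You instead transplant the two-sided mean-value estimate from Proposition \ref{mainprop1} and its Remark (modulus of continuity $\mu$, control of $G'(\xi_M)/G'(\delta^{1-s})$ via $G'\in\mathcal{RV}_{p-1}(0)$ and \eqref{pbounds}) and close with Fatou. Your route actually proves the full limit for $u\in C^1$, which is stronger than the stated $\liminf$, but it genuinely needs the differentiability half of \eqref{hypotheses}; the paper's route is only one-sided yet uses nothing beyond convexity and $G\in\mathcal{RV}_p(0)$, so it would survive weakening the hypotheses on $G'$. Two small points: since $C$ is a fixed constant in your argument, pointwise regular variation of $G'$ at $\lambda=C$ suffices and \eqref{unifconvergence} is not needed; and you are right that the computation yields $\tfrac{K_{N,p}}{p(1-s)}|\nabla u(x)|^p$ under the stated $1/G(\delta^{1-s})$ normalization --- the missing factor $p(1-s)$ is a normalization slip present in the paper's own statement and proof of this proposition, not a defect of your argument.
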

\begin{proof}
Given $\epsilon>0$, there exists $\delta>0$ such that, for all $x\in\mathbb{R}^N$ and $y\in B(x,\delta)$,
\begin{equation*}
|u(x)-u(y)-\nabla u(x)\cdot(y-x)|\leq\epsilon |x-y|,
\end{equation*}
so that
\begin{equation*}
\left|\nabla u(x)\cdot\frac{x-y}{|x-y|}\right||x-y|^{1-s}\leq\frac{|u(x)-u(y)|}{|x-y|^s}+\epsilon |x-y|^{1-s}.
\end{equation*}
Conversely, given $\delta>0$, let us set $\varepsilon=\varepsilon(\delta)>0$ be defined as
\begin{equation*}
\varepsilon=\min\{\epsilon>0:\ |u(x)-u(y)-\nabla u(x)\cdot(y-x)|\leq \epsilon |x-y|,\ \text{for }y\in B(x,\delta)\}.
\end{equation*}
Plainly, $\varepsilon$ is well-defined for functions $u\in C^1(\mathbb{R}^N)$ and, moreover, $\varepsilon$ is decreasing in $\delta$ and $\varepsilon\to0$ as $\delta\to0^+$. Next, fix $\eta\in(0,1)$. By using the convexity of $G$, we find

\begin{equation*}
\begin{split}
\int_{B(x,\delta)}G&\left(\left|\nabla u(x)\cdot\frac{x-y}{|x-y|}\right||x-y|^{1-s}\right)\frac{dy}{|x-y|^N}\\
&\leq \int_{B(x,\delta)}G\left(\frac{|u(x)-u(y)|}{(1-\eta)|x-y|^s}\right)\frac{dy}{|x-y|^N} +\int_{B(x,\delta)}G\left(\frac{\varepsilon}{\eta}|x-y|^{1-s}\right)\frac{dy}{|x-y|^N}\\
&= \int_{B(x,\delta)}G\left(\frac{|u(x)-u(y)|}{(1-\eta)|x-y|^s}\right)\frac{dy}{|x-y|^N} +\omega_N \int_0^\delta G\left(\frac{\varepsilon}{\eta}r^{1-s}\right)\frac{dr}{r}\\
&= \int_{B(x,\delta)}G\left(\frac{|u(x)-u(y)|}{(1-\eta)|x-y|^s}\right)\frac{dy}{|x-y|^N} +\frac{\omega_N}{1-s} \int_{0}^{\frac{\varepsilon}{\eta}\delta^{1-s}} \frac{G(t)}{t}dt.
\end{split}
\end{equation*} 
To continue, let us fix $\delta_0>0$ and take $\varepsilon_0=\varepsilon(\delta_0)$. Since, by Theorem \ref{KaramataDirect},
\begin{equation*}
\lim\limits_{\delta\to0^+}\frac{1}{G(\frac{\varepsilon_0}{\eta}\delta^{1-s} )}\int_{0}^{\frac{\varepsilon_0}{\eta}\delta^{1-s}} \frac{G(t)}{t}dt= \frac{1}{p},
\end{equation*}
it follows that
\begin{equation*}
\lim\limits_{\delta\to0^+}\frac{p}{G(\delta^{1-s})}\int_{0}^{\frac{\varepsilon_0}{\eta}\delta^{1-s}} \frac{G(t)}{t}= \left(\frac{\varepsilon_0}{\eta}\right)^p.
\end{equation*}
As a consequence, since $\varepsilon$ is decreasing in $\delta$,
\begin{equation*}
\lim\limits_{\delta\to0^+}\frac{p(1-s)}{G(\delta^{1-s})}\int_{0}^{\frac{\varepsilon}{\eta}\delta^{1-s}} \frac{G(t)}{t}\leq \left(\frac{\varepsilon_0}{\eta}\right)^p,\qquad \text{for all }\delta_0>0.
\end{equation*}
As $\delta_0>0$ is arbitrary and $\varepsilon\to 0$ as $\delta\to0^+$, we can take $\varepsilon_0$ arbitrarily small and, thus,
 \begin{equation*}
\lim\limits_{\delta\to0^+}\frac{p(1-s)}{G(\delta^{1-s})}\int_{0}^{\frac{\varepsilon}{\eta}\delta^{1-s}} \frac{G(t)}{t}=0.
\end{equation*}
Therefore,
\begin{equation*}
\begin{split}
|\nabla u(x)|^p&= \lim\limits_{\delta\to 0^+}\frac{1}{G(\delta^{1-s})} \int_{B(x,\delta)}G\left(\left|\nabla u(x)\cdot\frac{x-y}{|x-y|}\right||x-y|^{1-s}\right)\frac{dy}{|x-y|^N}\\
&\leq \liminf\limits_{\delta\to0^+} \int_{B(x,\delta)}G\left(\frac{|u(x)-u(y)|}{(1-\eta)|x-y|^s}\right)\frac{dy}{|x-y|^N}.
\end{split}
\end{equation*}
We finish by setting $v=\frac{1}{1-\eta}u$ and taking the limit $\eta\to0^+$ (cf. \cite[Lemma 3.3]{Alberico2020}).
\end{proof}
We are ready to prove the main result of this work.
\begin{proof}[Proof of Theorem \ref{mainTheorem}]
The proof follows as in \cite[Theorem 1.1]{Alberico2020}. Let $u\in W^{1,G}(\mathbb{R}^N)$ and let $\{u_k\}_{k\in\mathbb{N}}$ be the sequence defined by \eqref{mollified}. Note that, by \eqref{inclusion}, $u\in W^{1,p}(\mathbb{R}^N)$. Then, by Proposition \ref{mainprop2}, Lemma \ref{lem:mollifier} and Proposition \ref{prop_liminf}, we find
\begin{equation*}
\begin{split}
\int_{\mathbb{R}^N}|\nabla u(x)|^p\,dx&\geq \limsup\limits_{\delta\to0^+}\frac{1}{G(\delta^{1-s})}\int_{B(x,\delta)}G\left(\frac{|u(x)-u(y)|}{|x-y|^{s}}\right)\frac{dy}{|x-y|^N}\\
&\geq\liminf\limits_{\delta\to0^+}\frac{1}{G(\delta^{1-s})}\int_{B(x,\delta)}G\left(\frac{|u(x)-u(y)|}{|x-y|^{s}}\right)\frac{dy}{|x-y|^N}\\
&\geq\liminf\limits_{\delta\to0^+}\frac{1}{G(\delta^{1-s})}\int_{B(x,\delta)}G\left(\frac{|u_k(x)-u_k(y)|}{|x-y|^{s}}\right)\frac{dy}{|x-y|^N}\\
&\geq\int_{\mathbb{R}^N}|\nabla u_k(x)|^p\,dx.
\end{split}
\end{equation*}
Because of \eqref{conv_ae2} and Fatou's Lemma, we conclude
\begin{equation*}
\liminf\limits_{k\to\infty}\int_{\mathbb{R}^N}|\nabla u_k(x)|^p\,dx\geq\int_{\mathbb{R}^N}|\nabla u(x)|^p\,dx.
\end{equation*}
\end{proof}
We finish this section by proving Theorem \ref{Th2}.
\begin{proof}[Proof of  Theorem \ref{Th2}]
Let $u\in L^G(\mathbb{R}^N)$ and, for $k\in\mathbb{N}$, let $u_k$ be the function defined as in \eqref{mollified} with $r=\frac{1}{k}$. Then, because of \eqref{hyp_inf} and Lemma \ref{lem:mollifier}, there exists $c>0$ such that
\begin{equation*}
\liminf\limits_{\delta\to 0^+}\frac{1}{G(\delta^{1-s})}\int_{\mathbb{R}^N}\int_{B(x,\delta)}G\left(\frac{|u_k(x)-u_k(y)|}{|x-y|^{s}}\right)\frac{dydx}{|x-y|^N}<c,\qquad\text{for }k\in\mathbb{N}.
\end{equation*}
Thus, by Proposition \ref{prop_liminf},
\begin{equation*}
\int_{\mathbb{R}^N}|\nabla u_k(x)|^pdx\leq c\qquad\text{for }k\in\mathbb{N}.
\end{equation*}
Since $u\in L^G(\mathbb{R}^N)$ and, by \eqref{hypotheses},
\begin{equation*}
\int_{\mathbb{R}^N}|u_k(x)|^p\, dx\leq c_\ell\int_{\mathbb{R}^N}G(|u(x)|)dx\qquad\text{for }k\in\mathbb{N},
\end{equation*}
the sequence $\{u_k\}_{k\in\mathbb{N}}$ is bounded in $W^{1,p}(\mathbb{R}^N)$ and then, there exists $v\in W^{1,p}(\mathbb{R}^N)$ and a subsequence (still denoted by $\{u_k\}_{k\in\mathbb{N}}$) such that
$ u_k\rightharpoonup v$ in $W^{1,p}(\mathbb{R}^N)$. Since $u\in L^G(\mathbb{R}^N)$, in particular $u\in L^1_{loc}(\mathbb{R}^N)$ so that, by \eqref{conv_ae}, we conclude $v=u$ and, thus, $u\in W^{1,p}(\mathbb{R}^N)$.
\end{proof}
\section{Further comments}\label{Section:Further}

\subsection{The case of a sequence}\hfill\newline
Let us begin by recalling the definition of $\Gamma$-convergence of functionals.
\begin{definition}
Let $X$ be a metric space and $F,F_k:X\mapsto\overline{\mathbb{R}}$. We say that $F_k$ $\Gamma$-converges to F if, for every $u\in X$ the following conditions hold:
\begin{enumerate}
\item (liminf--inequality) For every sequence $\{u_k\}_{k\in\mathbb{N}}\subset X$ such that $u_k\to u$ in $X$,
\begin{equation*}
F(u)\leq\liminf\limits_{k\to\infty} F_k(u_k).
\end{equation*}
\item (limsup--inequality) For every $u\in X$, there is a sequence $\{u_k\}_{k\in\mathbb{N}}\subset X$ such that $u_k\to u$ in $X$ and
\begin{equation*}
\limsup\limits_{k\to\infty} F_k(u_k)\leq F(u).
\end{equation*}
\end{enumerate}
\end{definition}
A full $\Gamma$-convergence result for functionals also requires the compactness of bounded sequences. In our setting this reads as
\begin{proposition}\label{compact}
Let $\{u_k\}_{k\in\mathbb{N}}\subset L^G(\mathbb{R}^N)$. If 
\begin{equation*}
\sup\limits_{k\in\mathbb{N}} \frac{1}{G(\delta_k^{1-s})}\Psi_{s,G,\delta_k}(u_k)<+\infty\qquad\text{and}\qquad\sup\limits_{k\in\mathbb{N}} \Phi_G(u_k)<+\infty,
\end{equation*} 
then $u_k\to u$ in $L_{loc}^G(\mathbb{R}^N)$ for some $u\in W^{1,p}(\mathbb{R}^N)$.
\end{proposition}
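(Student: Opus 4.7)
The plan is to adapt the strategy of the proof of Theorem \ref{Th2} to the sequence setting by combining mollification of each $u_k$ at a scale tied to its horizon $\delta_k$ with a compactness extraction. There is no loss in assuming $\delta_k\to 0^+$: the opposite case $\liminf_k\delta_k>0$ reduces, after extracting a subsequence with $\delta_k\geq\delta_0>0$, to Theorem \ref{teo:compact} via the inclusion of balls $B(x,\delta_0)\subset B(x,\delta_k)$.

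For each $k$, introduce the smooth approximation $w_k\vcentcolon= u_k\ast\rho_{\delta_k/2}$. By Lemma \ref{lem:mollifier} and the first hypothesis,
$$\Psi_{s,G,\delta_k}(w_k)\leq\Psi_{s,G,\delta_k}(u_k)\leq M\,G(\delta_k^{1-s}),$$
while Jensen's inequality yields $\Phi_G(w_k)\leq\Phi_G(u_k)\leq M$. The essential observation is that $u_k$ and $w_k$ become indistinguishable in $L^G$: arguing as in the proof of Lemma \ref{lem:equicont}, for any $z\in B(0,\delta_k/2)$ one has $\Phi_G(u_k-u_k(\cdot-z))\leq C|z|^s\Psi_{s,G,\delta_k}(u_k)$, and averaging this estimate against $\rho_{\delta_k/2}$ together with $P_4$) provides
$$\Phi_G(u_k-w_k)\leq C\,\delta_k^s\,G(\delta_k^{1-s})\,M\longrightarrow 0\qquad\text{as }k\to\infty,$$
since $G\in\mathcal{RV}_p(0)$ with $p>1$ forces $G(\delta_k^{1-s})\to 0$.

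Each $w_k$ being smooth, Proposition \ref{mainprop1} combined with Proposition \ref{mainprop2} (which provides the uniform bound needed to integrate the pointwise limit via dominated convergence) yields
$$\lim_{\delta\to 0^+}\frac{p(1-s)}{G(\delta^{1-s})}\Psi_{s,G,\delta}(w_k)=K_{N,p}\int_{\mathbb{R}^N}|\nabla w_k|^p\,dx.$$
Invoking this limit at scale $\delta=\delta_k$, together with Lemma \ref{lem:mollifier} (extending $\Psi_{s,G,\delta}(w_k)\leq\Psi_{s,G,\delta}(u_k)$ to every $\delta>0$) and the slow variation of $\ell$ in $G(t)=t^p\ell(t)$, a careful estimate should yield
$$K_{N,p}\|\nabla w_k\|_{L^p}^p\leq p(1-s)M+o(1),\qquad k\to\infty.$$
Together with the $L^p$-bound provided by \eqref{inclusion} and $\Phi_G(w_k)\leq M$, this shows $\{w_k\}$ is bounded in $W^{1,p}(\mathbb{R}^N)$. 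Rellich--Kondrachov then furnishes a subsequence $w_{k_\ell}\to u$ in $L^p_{loc}(\mathbb{R}^N)$ with $u\in W^{1,p}(\mathbb{R}^N)$ (the latter by weak lower semicontinuity), and the preceding paragraph upgrades this into $u_{k_\ell}\to u$ in $L^G_{loc}(\mathbb{R}^N)$.

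The main obstacle is to make the displayed gradient estimate rigorous: the limit in Proposition \ref{mainprop1} is pointwise in $\delta$ for each \emph{fixed} $w_k$, whereas here both $w_k$ and $\delta=\delta_k$ vary simultaneously. Bridging this gap requires either a quantitative rate of convergence in Proposition \ref{mainprop1} formulated in terms of the smoothness data of $w_k$ (which is tractable since $w_k=u_k\ast\rho_{\delta_k/2}$ enjoys explicit bounds in terms of $\delta_k$ and $\|u_k\|_{L^G}$), or a more delicate annular decomposition of $\Psi_{s,G,\delta}(u_k)$ between the scales $|x-y|<\delta_k$ and $\delta_k\leq|x-y|<\varepsilon$, in order to translate the diagonal hypothesis at $\delta=\delta_k$ into a usable upper bound on $\liminf_{\delta\to 0^+}\frac{1}{G(\delta^{1-s})}\Psi_{s,G,\delta}(w_k)$ amenable to Proposition \ref{prop_liminf}.
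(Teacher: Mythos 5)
First, a point of orientation: the paper does not prove Proposition \ref{compact}. It is stated as the missing ingredient for a full $\Gamma$-convergence result, the paper explains why its natural route via Lemma \ref{useles} breaks down (the second term cannot be made independent of the smaller horizon), and it explicitly leaves the statement as an open problem. So there is no proof to match yours against; you are attempting to settle the open question, and your attempt does not close it.

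The gap is exactly where you flag it: the displayed bound $K_{N,p}\|\nabla w_k\|_{L^p}^p\leq p(1-s)M+o(1)$ is unproved, and it carries the entire argument. Proposition \ref{mainprop1} is a qualitative limit as $\delta\to0^+$ for a \emph{fixed} smooth function; invoking it at $\delta=\delta_k$ for the $k$-dependent function $w_k=u_k\ast\rho_{\delta_k/2}$ is a diagonal step with no uniformity behind it (and, as a secondary issue, $w_k\notin C_c^2(\mathbb{R}^N)$ and the family $\{\nabla w_k\}$ has no equicontinuity, so even the hypotheses of Proposition \ref{mainprop1} and its remark are not met uniformly). To see that the obstruction is substantive rather than technical, try to make the step quantitative: restricting $\Psi_{s,G,\delta_k}(u_k)$ to the annulus $\delta_k/2<|x-y|<\delta_k$ and using the only available pointwise lower bound $G(t)\geq c_\ell t^p$ from \eqref{hypotheses}, the hypothesis gives, on average over $h$ in that annulus, $\|u_k(\cdot+h)-u_k\|_{L^p}^p\leq C M\,\delta_k^{sp}G(\delta_k^{1-s})=CM\,\delta_k^{p}\,\ell(\delta_k^{1-s})$, whence $\|\nabla w_k\|_{L^p}^p\leq CM\,\ell(\delta_k^{1-s})$, which is unbounded for the prototype $G(t)=t^p(1+|\log t|)$. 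The factor $\ell(\delta_k^{1-s})$ is precisely what the regular-variation limit cancels for a fixed function and what no fixed-$k$ estimate cancels here; this is the same loss the paper runs into with Lemma \ref{useles}. Your preparatory steps are sound (the reduction to $\delta_k\to0^+$, the bound $\Psi_{s,G,\delta_k}(w_k)\leq\Psi_{s,G,\delta_k}(u_k)$ from Lemma \ref{lem:mollifier}, and $\Phi_G(u_k-w_k)\to0$ via Lemma \ref{lem:equicont} and Jensen), but the heart of the proposition remains open.
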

Taking in mind the results proven in the former sections, it is clear that a full $\Gamma$-convergence result follows once we prove the liminf--inequality and Proposition \ref{compact}.  The first goal is achieved combining Proposition \ref{mainprop1} and Lemma \ref{lem:mollifier}.
\begin{proposition}
Let $\delta_k\to 0$ and $\{u_k\}_{k\in\mathbb{N}}\subset L^G(\mathbb{R}^N)$ be a sequence such that $u_k\to u$ in $L^G(\mathbb{R}^N)$ for some $u\in W^{1,G}(\mathbb{R}^N)$ as $k\to+\infty$. Then,
\begin{equation*}
\int_{\mathbb{R}^N}|\nabla u(x)|^pdx\leq\liminf\limits_{k\to+\infty}\,\frac{p(1-s)}{G(\delta_k^{1-s})}\Psi_{s,G,\delta_k}(u_k).
\end{equation*}
\end{proposition}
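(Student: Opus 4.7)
The plan is to combine the pointwise localization estimate of Proposition \ref{mainprop1} with the mollification bound of Lemma \ref{lem:mollifier}, through a diagonal-type argument that tames the joint limit $k\to\infty$, $\delta_k\to 0^+$. First I would fix $r>0$ and apply Lemma \ref{lem:mollifier} to the mollified sequence $(u_k)_r$, obtaining
\[
\frac{p(1-s)}{G(\delta_k^{1-s})}\Psi_{s,G,\delta_k}((u_k)_r) \leq \frac{p(1-s)}{G(\delta_k^{1-s})}\Psi_{s,G,\delta_k}(u_k),
\]
so that it suffices to bound the liminf of the mollified functional from below by $K_{N,p}\int_{\mathbb{R}^N}|\nabla u_r|^p\,dx$ for every fixed $r>0$, and then let $r\to0^+$.

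Second, since $u_k\to u$ in $L^G(\mathbb{R}^N)$ and, by \eqref{inclusion}, in $L^1_{\mathrm{loc}}(\mathbb{R}^N)$, convolution with the fixed mollifier $\rho_r\in C_c^\infty(\mathbb{R}^N)$ yields $(u_k)_r\to u_r$ together with all derivatives, locally uniformly on $\mathbb{R}^N$. Consequently the family $\{(u_k)_r\}_{k\in\mathbb{N}}\cup\{u_r\}$ has uniformly bounded $C^2$-norms and equicontinuous gradients on every compact set. Applying the equicontinuous-family refinement of Proposition \ref{mainprop1} (the remark following it), together with the pointwise convergence $\nabla(u_k)_r(x)\to\nabla u_r(x)$, I would obtain the pointwise liminf inequality
\[
\liminf_{k\to\infty}\frac{p(1-s)}{G(\delta_k^{1-s})}\int_{B(x,\delta_k)} G\!\left(\frac{|(u_k)_r(x)-(u_k)_r(y)|}{|x-y|^s}\right)\!\frac{dy}{|x-y|^N}\geq K_{N,p}|\nabla u_r(x)|^p
\]
for every $x\in\mathbb{R}^N$.

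Third, Fatou's Lemma applied to the outer $\mathbb{R}^N$-integral transfers this pointwise bound to
\[
K_{N,p}\int_{\mathbb{R}^N}|\nabla u_r(x)|^p\,dx \leq \liminf_{k\to\infty}\frac{p(1-s)}{G(\delta_k^{1-s})}\Psi_{s,G,\delta_k}(u_k).
\]
Finally, the hypothesis $u\in W^{1,G}(\mathbb{R}^N)$ and \eqref{inclusion} give $\nabla u\in L^p(\mathbb{R}^N)$, whence standard mollification theory yields $\nabla u_r\to\nabla u$ in $L^p(\mathbb{R}^N)$ as $r\to0^+$, and the conclusion follows by letting $r\to0^+$ on the left-hand side.

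The crux is the pointwise estimate in the second step, where two interacting limits must be handled together: the locally uniform $C^2$ convergence $(u_k)_r\to u_r$ as $k\to\infty$ and the shrinking ball $B(x,\delta_k)$ as $\delta_k\to 0^+$. The key observation that makes this tractable is the uniform Taylor estimate $|(u_k)_r(x)-(u_k)_r(y)-\nabla(u_k)_r(x)\cdot(y-x)|\leq C|x-y|^2$ with $C$ independent of $k$, coming from the uniform $C^2$ bound on the mollified family. This allows one to repeat the argument of Proposition \ref{mainprop1} verbatim, the varying coefficient $\nabla(u_k)_r(x)$ being absorbed via the uniform convergence theorem for $G\in\mathcal{RV}_p(0)$ of Theorem \ref{unifth}.
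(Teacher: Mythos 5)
Your proof is correct and follows essentially the same route as the paper: mollify at a fixed scale $r$, compare via Lemma \ref{lem:mollifier}, pass to the limit in $k$ for the smooth mollified functions, and then send $r\to0^+$ using $\nabla u_r\to\nabla u$ in $L^p(\mathbb{R}^N)$. You are in fact more explicit than the paper about the one delicate point --- the joint limit $k\to\infty$, $\delta_k\to0^+$, where both the function and the radius vary --- which you resolve correctly through the uniform $C^2$ bounds on $\{(u_k)_r\}_{k}$ and the equicontinuous-family remark following Proposition \ref{mainprop1}, and by settling for a pointwise liminf plus Fatou rather than the full limit; the paper instead asserts the corresponding equality directly, citing the analogous argument in \cite{Bellido2015}. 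One remark on constants: your argument (correctly) produces the factor $K_{N,p}$ in front of $\int_{\mathbb{R}^N}|\nabla u|^p\,dx$, consistent with Proposition \ref{mainprop1} and Theorem \ref{mainTheorem}, whereas the proposition as printed omits it; this appears to be a typo in the paper rather than a gap in your proof.
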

\begin{proof}
The proof follows as that of \cite[Theorem 1-D2)]{Bellido2015}. For each $k\in\mathbb{N}$ and $r>0$, let $u_{k,r}$ and $u_r$ be defined as in \eqref{mollified}, the regularized functions of $u_k$ and $u$ respectively. By Proposition \ref{mainprop1} and Lemma \ref{lem:mollifier},
\begin{equation*}
\int_{\mathbb{R}^N}|\nabla u_r(x)|^pdx=\lim\limits_{k\to+\infty}\frac{p(1-s)}{G(\delta_k^{1-s})}\Psi_{s,G,\delta_k}(u_{k,r})\leq\liminf\limits_{k\to+\infty}\frac{p(1-s)}{G(\delta_k^{1-s})}\Psi_{s,G,\delta_k}(u_{k}).
\end{equation*}
We conclude since, by standard properties of mollifiers, $u_r\to u$ in $W^{1,p}(\mathbb{R}^N)$ as $r\to0$. 
\end{proof}
Regarding Proposition \ref{compact}, let us note the following. When analyzing the limit $s\to1^+$ of the complete functional
\begin{equation*}
\Psi_{s,G,\infty}(u)=\int_{\mathbb{R}^N}\int_{\mathbb{R}^N} G\left(\frac{|u(x)-u(y)|}{|x-y|^{s}}\right)\frac{dydx}{|x-y|^N},
\end{equation*}
the liminf--inequality and the compactness are proved with the aid of the following inequality (cf. \cite[Theorem 5.2]{FernandezBonder2019}),
\begin{equation}\label{ineqSalort}
(1-s_1)\Psi_{s_1,G,\infty}(u)\leq 2^{1-s_1}(1-s_2)\Psi_{s_2,G,\infty}(u)+\frac{2\mathfrak{c}\omega_N(1-s_1)}{s_1}\Phi_G(u).
\end{equation}
The use of this inequality is twofold. At one hand, it allows to prove the liminf--inequality by fixing $s_1=t$ for some $0<t<1$, and then performing the limit $s_2=s_k\to1^-$ as $k\to+\infty$ and concluding by taking the limit $t\to1^-$. In other words, inequality \eqref{ineqSalort} allows to bound $(1-s)\Psi_{s,G,\infty}$ at $s_1$ using two terms: one at $s_2$ that does not depend on $s_1$ and other that vanishes as $s_1\to1^-$. 

On the other hand, let $\{u_k\}_{k\in\mathbb{N}}$ be such that 
\begin{equation*}
\sup\limits_{k\in\mathbb{N}}(1-s_k)\Psi_{s_k,G,\infty}(u_k)<+\infty\qquad\text{and}\qquad \sup\limits_{k\in\mathbb{N}}\Phi_{G}(u_k)<+\infty.
\end{equation*}
Then, take $0<t<1$ be fixed, so that, by \eqref{ineqSalort}, we have $\{u_k\}_{k\in\mathbb{N}}\subset W^{t,G}(\mathbb{R}^N)$ is bounded and hence, by the compactness provided by \cite[Theorem 3.1]{FernandezBonder2019}, there  exists $u\in L^G(\mathbb{R}^N)$ such that $u_k\to u$ in $L_{loc}^G(\mathbb{R}^N)$.

Adapting the approach of \cite[Theorem 5.2]{FernandezBonder2019} to the case $\delta\to0^+$ yields the following.
 \begin{lemma}\label{useles}
 Let $u\in L^G(\mathbb{R}^N)$ and $0<\delta_2<\delta_1$. Then
 \begin{equation*}
 \frac{1}{G(\delta_1^{1-s})}\Psi_{s,G,\delta_1}(u)\leq\frac{1}{G(\delta_2^{1-s})}\Psi_{s,G,\delta_2}(u)+\frac{\mathfrak{c}\,\omega_N N}{s}\left(1-\left(\frac{\delta_2}{\delta_1}\right)^s\right)\frac{1}{G(\delta_2^{1-s})}\Phi_G\left(u\,\delta_2^{-s}\right).
 \end{equation*}
 \end{lemma}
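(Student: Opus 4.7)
The plan is to split the integration region and control only the contribution from the annulus $A_x := B(x,\delta_1)\setminus B(x,\delta_2)$. Writing $\Psi_{s,G,\delta_1}(u) = \Psi_{s,G,\delta_2}(u) + T$, where $T$ collects the integral over $A_x$, and observing that $\delta_2 < \delta_1$ together with $1-s>0$ and monotonicity of $G$ yield $G(\delta_2^{1-s}) \leq G(\delta_1^{1-s})$, I would first deduce
\begin{equation*}
\frac{1}{G(\delta_1^{1-s})}\Psi_{s,G,\delta_1}(u) \leq \frac{1}{G(\delta_2^{1-s})}\Psi_{s,G,\delta_2}(u) + \frac{1}{G(\delta_2^{1-s})} T.
\end{equation*}
This reduces the claim to showing $T \leq \frac{\mathfrak{c}\omega_N N}{s}\bigl(1-(\delta_2/\delta_1)^s\bigr)\Phi_G(u\delta_2^{-s})$.

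To attack $T$, the idea is to dispose of the difference $u(x)-u(y)$ via the triangle inequality, monotonicity of $G$, and property $P_4$:
\begin{equation*}
G\!\left(\frac{|u(x)-u(y)|}{|x-y|^s}\right) \leq \frac{\mathfrak{c}}{2}\left[G\!\left(\frac{|u(x)|}{|x-y|^s}\right) + G\!\left(\frac{|u(y)|}{|x-y|^s}\right)\right].
\end{equation*}
Since the annular condition $|x-y|\in(\delta_2,\delta_1)$ and the kernel $|x-y|^{-N}$ are symmetric in $(x,y)$, Fubini together with the swap $x\leftrightarrow y$ makes the two pieces equal, leaving $T$ bounded by $\mathfrak{c}$ times the single integral $\int_{\mathbb{R}^N}\int_{A_x} G(|u(x)|/|x-y|^s)|x-y|^{-N}\,dy\,dx$.

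For this remaining integral the key observation is that on $A_x$ the ratio $\delta_2^s/|x-y|^s$ lies in $(0,1)$, so $P_5$ applies:
\begin{equation*}
G\!\left(\frac{|u(x)|}{|x-y|^s}\right) = G\!\left(|u(x)|\delta_2^{-s}\cdot \frac{\delta_2^s}{|x-y|^s}\right) \leq \frac{\delta_2^s}{|x-y|^s}\,G(|u(x)|\delta_2^{-s}).
\end{equation*}
Passing to polar coordinates yields the elementary annular integral $\int_{A_x}|x-y|^{-N-s}\,dy = \frac{N\omega_N\delta_2^{-s}}{s}(1-(\delta_2/\delta_1)^s)$, whose $\delta_2^{-s}$ factor cancels the $\delta_2^s$ gained from $P_5$, exactly producing the desired bound on $T$.

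No serious obstacle is anticipated: the argument is a quantitative one-shot estimate paralleling the proof of inequality \eqref{ineqSalort} in \cite{FernandezBonder2019}, with the threshold $\delta_2$ in the annulus playing the role played there by the spectral parameter, and the uniform bound $\delta_2^s/|x-y|^s < 1$ on $A_x$ being exactly what makes $P_5$ applicable and simultaneously forces the $r^{-1-s}$ annular integral to converge. The only bookkeeping requiring care is the initial use of monotonicity of $G$ to convert the normalization $G(\delta_1^{1-s})^{-1}$ into $G(\delta_2^{1-s})^{-1}$ on the inner-ball piece, so that the right-hand side of the lemma materializes verbatim.
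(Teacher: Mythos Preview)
Your proposal is correct and matches the paper's intent: the paper does not actually prove this lemma but merely states that it follows by ``adapting the approach of \cite[Theorem~5.2]{FernandezBonder2019} to the case $\delta\to0^+$,'' and your argument is precisely that adaptation---split off the annulus, use $P_4$ and symmetry to reduce to a single term in $u(x)$, then apply $P_5$ with $b=\delta_2^s/|x-y|^s<1$ and compute the radial integral. The only minor bookkeeping to double-check is the convention for $\omega_N$ (the paper's usage implies $|\mathbb{S}^{N-1}|=N\omega_N$), which your polar computation handles correctly.
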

Unfortunately, contrary to the case $s\to1^-$, for which the function is integrated over the whole $\mathbb{R}^N$ independently of $0<s<1$ (so we have a complete picture of $u$ at every step of the limit), by the very definition of the problem as $\delta\to0^+$, we can not drop the dependence on $\delta_2$ of the second term in Lemma \ref{useles}. That is, given $\delta_k\to0$ and $\{u_k\}_{k\in\mathbb{N}}$ such that 
\begin{equation*}
\sup\limits_{k\in\mathbb{N}}\frac{1}{G(\delta_k^{1-s})}\Psi_{s,G,\delta_k}(u_k)<+\infty\qquad\text{and}\qquad \sup\limits_{k\in\mathbb{N}}\Phi_{G}(u_k)<+\infty,
\end{equation*}
Lemma \ref{useles} does not allow us to conclude 
\begin{equation}\label{goal}
\sup\limits_{k\in\mathbb{N}}\Psi_{s,G,\delta_0}(u_k)<+\infty,
\end{equation}
for some $\delta_0>0$ fixed. Observe that, if \eqref{goal} holds, then $\{u_k\}_{k\in\mathbb{N}}\subset W^{s,G,\delta_0}(\mathbb{R}^N)$ is bounded and, by Theorem \ref{teo:compact}, we get the existence of a convergent subsequence. We left this as an open problem.
\subsection{Behavior of the spectrum as $\delta\to0^+$}\hfill\newline
The spectrum of the nonlocal operator associated to the functional
\begin{equation*}
\Psi_{s,G,\infty}(u)=\int_{\mathbb{R}^N}\int_{\mathbb{R}^N} G\left(\frac{|u(x)-u(y)|}{|x-y|^{s}}\right)\frac{dydx}{|x-y|^N},
\end{equation*}
is analyzed in \cite{Salort2020}. In particular, given $\Omega\subset\mathbb{R}^N$ and $\lambda\in\mathbb{R}$, the author analyzed the problem
\begin{equation}\label{eigenprob}
        \left\{
        \begin{tabular}{ll}
        $(-\Delta_g)_{\infty}^su=\lambda g(|u|)\frac{u}{|u|}$, &in $\Omega$, \\
        $\mkern+65mu u=0$ &on $\mathbb{R}^N\backslash\Omega$,
        \end{tabular}
        \right.
\end{equation}
where $(-\Delta_g)_{\infty}^s$ denotes the \textit{fractional $g\,$-Laplacian},
\begin{equation*}
(-\Delta_g)^s u(x)= (-\Delta_g)_{\infty}^s u(x)=p.v. \int_{\mathbb{R}^N}g(|D_su|)\frac{D_su}{|D_su|}\frac{dy}{|x-y|^N},
\end{equation*}
with $g=G'$ and 
\begin{equation*}
D_su=D_su(x,y)=\frac{u(x)-u(y)}{|x-y|^s}.
\end{equation*}
The operator $(-\Delta_g)^s$ can be seen as the nonlocal counterpart of the $g$-Laplacian, namely
\begin{equation*}
-\Delta_g u=-div\left(g(|\nabla u|)\frac{\nabla u}{|\nabla u|}\right).
\end{equation*}
Problem \eqref{eigenprob} is the Euler-Lagrange equation corresponding to the minimization problem
\begin{equation*}
\alpha_{1,\mu}=\inf\limits_{u\in M_\mu}\frac{\Psi_{s,G,\infty}(u)}{\Phi_G(u)}\qquad\text{with}\qquad M_\mu=\{u\in W_0^{s,G,\infty}(\Omega): \Phi_g(u)=\mu\},
\end{equation*}
where $W_0^{s,G,\infty}(\Omega)=\{u\in W^{s,G,\infty}(\Omega):\ u=0\text{ on }\mathbb{R}^N\backslash\Omega\}$. This construction was latter extended (cf. \cite{Bahrouni2022}) to provide an increasing sequence $\{\lambda_k^{s,G,\infty}\}_{k\in\mathbb{N}}$ of variational eigenvalues. Since the $\Gamma$-convergence of functionals implies the convergence of the minima of those functionals (cf. \cite{Braides2002}), by the results of \cite{FernandezBonder2019}, the author proves (cf. \cite[Proposition 6.3]{Salort2020}),
\begin{equation*}
\lim\limits_{s\to1^-}(1-s)\alpha_{1,\mu,s}=\alpha_{1,\mu,1}.
\end{equation*}
Similar results for the case $G(t)=t^p$ were proven in \cite{Brasco2016}.

It is natural then to set the same question when dealing with the truncated version of the fractional $g$-Laplacian, namely, given and horizon $\delta>0$, $s\in(0,1)$, $\Omega\subset\mathbb{R}^N$ and $\lambda\in\mathbb{R}$, consider the problem 
\begin{equation*}
        \left\{
        \begin{tabular}{ll}
        $(-\Delta_g)_{\delta}^su=\lambda g(|u|)\frac{u}{|u|}$, &in $\Omega$, \\
        $\mkern+59.5mu u=0$ &on $\partial_\delta\Omega$,
        \end{tabular}
        \right.
\end{equation*}
where $(-\Delta_g)_{\delta}^s$ denotes the {\it peridynamic} fractional $g$-Laplacian,
\begin{equation*}
(-\Delta_g)_{\delta}^s u(x)= p.v. \int_{B(x,\delta)}g(|D_su|)\frac{D_su}{|D_su|}\frac{dy}{|x-y|^N}
\end{equation*} 
and $\partial\Omega_{\delta}=\{z\in\mathbb{R}^N\backslash\Omega:|x-z|<\delta,\text{ for }x\in\Omega\}$. The case $G(t)=t^p$ was addressed in \cite{Bellido2021, Bellido2021a}, where it was proven
\begin{equation*}
\frac{1}{\delta^{p(1-s)}}\lambda_k^{\delta,s,p}\to K_{N,p}\,\lambda_k^{1,p}\qquad\text{as }\delta\to 0^+,\text{ for }k\in\mathbb{N}.
\end{equation*}
Based on the above results, once a full $\Gamma$-convergence was proved, it is to be expected that the following holds, 
\begin{equation*}
\lim\limits_{\delta\to0^+}\frac{p(1-s)}{G(\delta^{1-s})}\lambda_1^{\delta,s,G}=K_{N,p}\lambda_1^{1,p},
\end{equation*}
for $G$ an Orlicz function satisfying \eqref{hypotheses} and $p=index(G)$.


\end{document}